\DeclareMathOperator{\ff}{f} % ff=flat+free
\DeclareMathOperator{\rat}{rat}
\DeclareMathOperator{\Sing}{Sing}
\title{On the Iitaka Conjecture $C_{n,m}$ for Kähler Fibre Spaces}
\author{Juanyong WANG}
\date{}
\begin{document}
\maketitle
%\input{Resume}
%\selectlanguage{english} 

\begin{abstract}
By applying the positivity theorem of direct images and a pluricanonical version of the structure theorem on the cohomology jumping loci à la Green-Lazarsfeld-Simpson, we show that the klt Kähler version of the Iitaka conjecture $C_{n,m}$ (Ueno, 1975) for $f:X\to Y$ (surjective morphism between compact Kähler manifolds with connected general fibre) holds true when the determinant of the direct image of some power of the relative canonical bundle is big on $Y$ or when $Y$ is a complex torus. These generalize the corresponding results of Viehweg (1983) and of Cao-P\u aun (2017) respectively. We further generalize the later case to the geometric orbifold setting, i.e. prove that $C_{n,m}^{\orbifold}$ (Campana, 2004) holds when $Y$ is a complex torus. 
%Along the way, we also recover the result (klt version of Campana (2004)) that klt $C^{\orbifold}_{n,m}$ holds when $Y$ is of general type in the orbifold sense, which generalizes a result of Kawamata (1981).
\end{abstract}

\tableofcontents

\section*{\hypertarget{sec_intro}{Introduction}}
\label{sec_intro}
\addcontentsline{toc}{section}{Introduction}

The Iitaka conjecture $C_{n,m}$\,, in its original form, predicts the subadditivity of the Kodaira dimension for algebraic fibre spaces (c.f. \cite[\S 11.5, Conjecture $C_n$\,, pp.~132-133]{Uen75}); more precisely, for $f:X\to Y$ a surjective morphism between normal projective varieties whose general fibre $F$ is connected, $C_{n,m}$ predicts that
\[
\kappa(X)\geqslant\kappa(F)+\kappa(Y).
\]
This conjecture is intimately related to the study of birational classification of complex algebraic varieties (the Minimal Model Program, abbr. MMP). According to the philosophy of MMP, $C_{n,m}$ is naturally generalized to the log version, usually called $C_{n,m}^{\log}$\,; Moreover, Frédéric Campana further generalize $C_{n,m}$ to the setting of geometric orbifolds, called $C_{n,m}^{\orbifold}$\,, which is formulated in \cite[Conjecture 4.1]{Cam04} and in \cite[Conjecture 6.1]{Cam09}. In addition, by taking into consideration the variation of the fibre space, Eckart Viehweg also propose a stronger version of the $C_{n,m}$\,, called $C_{n,m}^+$.

As shown in \cite{KMM87} (resp. \cite{Kaw85}), $C_{n,m}$ (resp. $C_{n,m}^+$) can be regarded as the consequence of the famous Minimal Model Conjecture and the Abundance Conjecture; moreover, in virtue of the subadditivity of Nakayama's numerical dimensions (c.f. \cite[\S V.4.a, 4.1.Theorem(1), pp.~220-221]{Nak04}), $C_{n,m}^{\text{log}}$ follows from the so-called generalized Abundance Conjecture (for $\QQ$-divisors), c.f. \cite[Remark 1.8]{Fuj17}. 

Although initially stated for projective varieties, $C_{n,m}$\,, as well as the MMP and the Abundance, are considered as still hold for complex varieties in the Fujiki class $\mathcal{C}$ (c.f. \cite{Fuj78, Cam04, HP16, CHP16, Fuj17}); nevertheless they do not hold true in general for non-Kähler complex varieties, c.f. \cite[Remark 15.3, p.~187]{Uen75} %Remark 14.6, pp.~179-180  
for an counterexample. The objective of this article is to prove the klt Kähler version of $C_{n,m}^{\log}$ in two important special cases and further generalize the second one to the geometric orbifold setting. Let us remark that since the Kodaira dimension as well as the klt/lc property is invariant under taking log-resolutions, hence for simplicity we will state our main results always for Kähler manifolds, but one can easily see that it remains true for normal complex varieties in the Fujiki class $\mathcal{C}$. Now let us state our main theorem: 

\begin{mainthm}[Main Theorem]
\label{main-thm} 
Let $f: X\to Y$ be a surjective morphism between compact Kähler manifolds such that its general fibre $F$ is connected. And let $\Delta$ be an effective $\QQ$-divisor on $X$ such that $(X,\Delta)$ is Kawamata log terminal (abbr. klt). Suppose that one of following conditions is verified:
\begin{itemize}
\item[\rm(I)]\label{main-thm_I} there is an integer $m>0$ such that $m\Delta$ is an integral divisor and that the determinant line bundle $\det\!f_\ast(K_{X/Y}\ptensor[m]\otimes\scrO_X(m\Delta))$ is big on $Y$; 
\item[\rm(I\!I)]\label{main-thm_II} $Y$ is a complex torus. 
\end{itemize}
Then 
\[
\kappa(X,K_X+\Delta)\geqslant\kappa(F,K_F+\Delta_F)+\kappa(Y),
\]
where $\Delta_F:=\Delta|_F$.
\end{mainthm}
{\hyperref[main-thm_I]{Part (I)}} of the {\hyperref[main-thm]{Main Theorem}} generalizes {\cite[Theorem I\!I]{Vie83}}, which is intimately related to $C_{n,m}^+$ (c.f. \cite{Vie83} for more details; this article, however, will not pursue in this direction); while {\hyperref[main-thm_II]{Part (I\!I)}} generalizes {\cite[Theorem 1.1]{CP17}} and it will be further generalized to the setting of geometric orbifolds, in other word, we will prove $C_{n,m}^{\orbifold}$ for $f$ when $Y$ is a complex torus. 
%$C_{n,m}$ is already known in lower dimensions (for example: $\dim X\leqslant 6$, \cite{Bir09}; $\dim Y=1$, \cite{Fuj78,Kaw82}; $\dim Y=2$, \cite{Kaw81, Vie83, Cao15}). As for higher dimensions, it has been proved, by using the method of positivity of direct images developed by Griffiths, Fujita, Kawamata, Viehweg, Berndtsson, P\u aun, Takayama, etc., in the following three important cases:
%\begin{enumerate}
%\item $Y$ is of general type (Kawamata \cite{Kaw81}; Viehweg \cite{Vie83}; Campana \cite{Cam04}, in the geometric orbifold setting); 
%\item there exists an integer $m>0$ such that $\det f_*(K_{X/Y}\ptensor[m])$ is big on $Y$, i.e. $\kappa(Y,\det f_*(K_{X/Y}\ptensor[m]))=\dim Y$ (Viehweg \cite{Vie83});
%\item $Y$ est an Abelian variety (Cao \& P\u aun \cite{CP17}, the klt version).
%\end{enumerate}
Moreover, by following the same strategy of the proof of {\hyperref[main-thm_I]{Part (I)}}, we recover the result that klt Kähler version of $C_{n,m}^{\log}$ holds for $f:(X,\Delta)\to Y$ when $Y$ is of general type, which generalizes {\cite[Theorem 3]{Kaw81}}; we also further generalize this result to the geometric orbifold setting. Let us remark that the general (log canonical) version of $C_{n,m}^{\orbifold}$ for $Y$ general type (in the orbifold sense) has already been proved in \cite{Cam04}; the proof is based on a weak positivity result for direct images of twisted pluricanonical bundles, for which \cite{Cam04} only proves the projective case, and gives some hints for the Kähler case; it is established in this generality in \cite{Fuj17}.

Now let us explain the strategy of the proof of the {\hyperref[main-thm]{Main Theorem}}. Generally speaking, as in the mainstream of works on $C_{n,m}$ (among others, \cite{Fuj78,Kaw81,Kaw82,Vie83,CP17,Fuj17}), our proof is based on the positivity of relative pluricanonical bundles and of their direct images. Before we get into details let us first recall some definitions: a surjective proper morphism between complex varieties is called an {\em analytic fibre space}; an analytic fibre space $f:X\to Y$ is called a {\em Kähler fibre space} if locally over $Y$, $X$ is a Kähler variety (c.f. \cite[Definition 2.2]{HP16}).

%The {\hyperref[thm_pos_im-dir_a]{point (a)}} of the theorem is proved by Junyan Cao in \cite{Cao17};
%In this article, for the convenience of the readers, we will sketch the proof by following the great lines of \cite{HPS18}: the notion of (Griffiths) semi-positively curved singular Hermitian metrics on vector bundles and on torsion free sheaves as well as their $L^2$-reflexivity will be given in {\hyperref[ss_preliminary_metric]{\S \ref*{ss_preliminary_metric}}}, while the  construction of the canonical metrics mentioned in the theorem will be done in {\hyperref[ss_pos-im-dir_Bergman]{\S \ref*{ss_pos-im-dir_Bergman}}} and {\hyperref[ss_pos-im-dir_NS]{\S \ref*{ss_pos-im-dir_NS}}} respectively; the new feature in the theorem is the notion of $L^2$-reflexivity, which is a property that prevents the singular Hermitian metric to have certain unexpected pathology, c.f. {\hyperref[defn_reflexivite-L2]{Definition \ref*{defn_reflexivite-L2}}}. *

%Let us remark that in the early works of Berndtsson and P\u aun \ccite{Ber09,BP08,BP10}, the {\hyperref[thm_pos_im-dir_a]{part (a)}} is initially regarded as a consequence of the , it is realized in \cite{PT18} that it is {\hyperref[thm_pos_im-dir_a]{part (b)}}  

The key ingredient of the proof of {\hyperref[main-thm_I]{Part (I)}} of the {\hyperref[main-thm]{Main Theorem}} is the positivity of the relative $m$-Bergman kernel metric for Kähler fibre spaces, which is proved by Junyan Cao in \cite{Cao17} by applying the Ohsawa-Takegoshi extension theorem with optimal estimation for Kähler fibre spaces (c.f. {\hyperref[thm_Cao_OT]{Theorem \ref*{thm_Cao_OT}}}) also obtained in \cite{Cao17}, and states as follows (c.f. {\hyperref[thm_Cao_Bergman]{Theorem \ref*{thm_Cao_Bergman}}}): 
\begin{itemize}
\item[]Let $f: X\to Y$ be a Kähler fibre space between complex manifolds and let $(L,h_L)$ be holomorphic line bundle on $X$ endowed with a singular Hermitian metric whose curvature current is positive. Suppose that on the general fibre of $f$ there exists a section of $K_{X/Y}\ptensor[m]\otimes L$  satisfying the $L^{2/m}$-integrability condition for some $m$, then the $m$-Bergman kernel metric $h_{X/Y\!,L}^{(m)}$ on $K_{X/Y}\ptensor[m]\otimes L$ has positive curvature current. 
\end{itemize}
With the help of this positivity result, {\hyperref[main-thm_I]{Part (I)}} of our {\hyperref[main-thm]{Main Theorem}}, as well as the klt Kähler version of $C_{n,m}^{\log}$ for general type bases can both be deduced from (a global version of) the Ohsawa-Takegoshi type extension {\hyperref[thm_Deng_OT]{Theorem \ref*{thm_Deng_OT}}} as follows: 
\begin{itemize}
\item First by the useful {\hyperref[lemme_kod-eff+pull-ample]{Lemma \ref*{lemme_kod-eff+pull-ample}}}, we can reduce the proof of the addition formula to that of the non-vanishing of the (twisted) relative pluricanonical bundle, up to adding an ample line bundle from the base.
\item If $Y$ is of general type in the orbifold sense, the non-vanishing result mentioned above follows easily from the Ohsawa-Takegoshi type extension {\hyperref[thm_Deng_OT]{Theorem \ref*{thm_Deng_OT}}} in contrast to the proof in \cite{Vie83,Cam04,Fuj17}, where such non-vanishing results are deduced from the weak positivity of the direct images. Let us remark that: by generalizing the weak positivity theorem for $f$ Kähler fibre space and for $\Delta$ log canonical, the general (log canonical) version is proved in \cite{Cam04,Fuj17}. 
\item In the situation of {\hyperref[main-thm_I]{Part (I)}} of our {\hyperref[main-thm]{Main Theorem}}, the proof of this non-vanishing result follows the same strategy, but requires an extra effort to establish a comparison theorem between the determinant of the direct image and the canonical bundle of $X$, see {\hyperref[thm_CP_canonique:det-im-dir]{Theorem \ref*{thm_CP_canonique:det-im-dir}}}, which is a Kähler version of \cite[Theorem 3.13]{CP17}. 
\end{itemize}
The analytic proof given above does not explicitly involve any positivity result of direct images 
%and also avoids the technical tools, like (cyclic or Giesker's) covering tricks, in the algebro-geoemtric proof; 
while it has the drawback of not being able to tackle the log canonical case.  

Now we turn to the proof of {\hyperref[main-thm_II]{Part (I\!I)}} of our {\hyperref[main-thm]{Main Theorem}}, for which we follow step by step the same argument in \cite{CP17}. Our proof is based on the positivity of the canonical $L^2$ metric on direct images sheaves (c.f. {\hyperref[thm_pos_im-dir]{Theorem \ref*{thm_pos_im-dir}}}): 
\begin{itemize}
\item[] Let $f: X\to Y$ be a Kähler fibre space between complex manifolds and let $(L,h_L)$ be a holomorphic line bundle on $X$ endowed with a semi-positively curved singular Hermitian metric. Then the canonical $L^2$ metric $g_{X/Y\!,L}$ on the direct image sheaf $f_\ast\left(K_{X/Y}\otimes L\otimes\scrJ(h_L)\right)$ is a semi-positively curved singular Hermitian metric which satisfies the $L^2$ extension property. 
\end{itemize}
The main strategy for the proof of the above positivity result is already implicitly comprised in \cite{HPS18}, and the result is explicitly shown in \cite{DWZZ18} by proving a more general positivity theorem for singular Finsler metrics on direct images. In fact, this result is a consequence of the Ohsawa-Takegoshi extension theorem with optimal estimations obtained in \cite{Cao17}; the new feature is the $L^2$ extension property, which generalizes the well-known property of $\scrO$ that a $L^2$ holomorphic function extends across any analytic subset (compare this with the "minimal extension property" in \cite[Definition 20.1]{HPS18}). By combining the above positivity result of the canonical $L^2$ metric on direct images with the positivity of the relative $m$-Bergman kernel metric and by using the explicit construction of the $m$-Bergman kernel metric to get rid of the multiplier ideal (as in \cite[\S 4, p.367]{CP17}), We obtain the following positivity theorem for direct images of twisted pluricanonical bundles, which serves as a key ingredient of the proof of the {\hyperref[main-thm_II]{Main Theorem, Part (I\!I)}}:
\begin{mainthm}
\label{thm_pos-im-pluri-can}
Let $f:X\to Y$ a Kähler fibre space with $X$ and $Y$ complex manifolds. Let $\Delta$ be an effective $\QQ$-divisor on $X$ such that the pair $(X,\Delta)$ is klt. For any integer $m>0$ such that $m\Delta$ is an integral divisor,
%Suppose that there exists an integer $m>0$ such that $m\Delta$ is an integral divisor and such that 
%\[
%f_\ast\left(K_{X/Y}\ptensor[m]\otimes\scrO_X(m\Delta)\right)\neq 0\quad\Leftrightarrow\quad\dimcoh^0(X_y,K\ptensor[m]_{X_y}\otimes\scrO_{X_y}(m\Delta_y))>0\text{ for }y\text{ general.} 
%\]
%where $\Delta_y:=\Delta|_{X_y}$. 
the torsion free sheaf  
\[
\scrF_{m,\Delta}:=f_\ast\left(K_{X/Y}\ptensor[m]\otimes\scrO_X(m\Delta)\right)
\]
admits a canonical semi-positively curved singular Hermitian metric $g_{X/Y\!,\Delta}^{(m)}$ which satisfies the $L^2$ extension property.  
\end{mainthm}
Historically, the study of the positivity of direct images of (twisted) (pluri)canonical bundle(s) is initiated by the works of Griffiths on the variation of Hodge structures in the 60s, and is pursued by Fujita in \cite{Fuj78} and by Kawamata in \cite{Kaw81}; afterwards the study splits into two (related and complementary) main streams: the Hodge-theoretical aspect is further developed by Viehweg in the framework of weak positivity by algebro-geometric methods, while the curvature aspect is exploited by Berndtsson, P\u aun and Takayama (among others) by complex-analytic methods and by introducing the notion of (semi-positively curved) singular Hermitian metrics. The results mentioned above follow the philosophy of the later stream. 
%which is based on the Ohsawa-Takegoshi type extension theorems. 
Let us remark that for a torsion free sheaf on a (quasi-)projective variety, the existence of a semi-positively curved singular Hermitian metric implies the weak positivity, while the reciprocal implication is not known yet (it is in fact a singular version of Griffiths's conjecture). The advantage to have a such metric is that: in case that the determinant line bundle is trivial, one can further deduce, by using the $L^2$ extension property, that this torsion free sheaf is a Hermitian flat vector bundle (c.f. {\hyperref[thm_det-triv_herm-plat]{Theorem \ref*{thm_det-triv_herm-plat}}}), in which way we obtain a stronger regularity and our proof of the {\hyperref[main-thm]{Main Theorem}}, like \cite{CP17}, leans on this regularity.  

As a corollary of {\hyperref[thm_pos-im-pluri-can]{Theorem \ref*{thm_pos-im-pluri-can}}}, one finds that the induced metric $\det\!g_{X/Y\!,\Delta}^{(m)}$ on the determinant bundle $\det\!\scrF_{m,\Delta}$ has positive curvature current. Now let $Y=T$ be a complex torus; by an induction argument we can further assume that $T$ is a simple torus, that is, containing no non-trivial subtori. Then by a structure theorem for pseudo-effective line bundles on complex tori \cite[Theorem 3.3]{CP17} we have the following dichotomy according the sign of $\det\!\scrF_{m,\Delta}$:
\begin{itemize}
\item there is a integer $m>0$ sufficiently large and divisible such that $\det\!\scrF_{m,\Delta}$ is ample;
\item for every $m$ sufficiently large and divisible, $\det\!\scrF_{m,\Delta}$ is numerically trivial.
\end{itemize}
Apparently the first case fall into the situation of {\hyperref[main-thm_I]{Part (I)}} of our {\hyperref[main-thm]{Main Theorem}}. Hence we only need to tackle the second case, where one can use the $L^2$ extension property to further conclude that $(\scrF_{m,\Delta}\,,g_{X/Y\!,\Delta}^{(m)})$ is a Hermitian flat vector bundle.   
%standard argument similar to that of \cite{CP17}, one can use {\hyperref[thm_Kawamata_Ab-Var]{Theorem \ref*{thm_Kawamata_Ab-Var}}} (resp. Part (I\!I) of {\hyperref[main-thm]{Theorem \ref*{main-thm}}} and {\hyperref[thm_pos-im-pluri-can]{Theorem \ref*{thm_pos-im-pluri-can}}}) to reduce the proof of the Part (I\!I\!I) of {\hyperref[main-thm]{Theorem \ref*{main-thm}}} to the case that $\kappa(X, K_X+\Delta)\leqslant 0$ (resp. the case that the direct image sheaf $\scrF_{m,\Delta}:=f_\ast\!\left(K_{X/Y}\ptensor[m]\otimes\scrO_X(m\Delta)\right)$ is a Hermitian flat vector bundle), as explained in {\hyperref[ss_demo-main-thm_reduction-kappa<=0]{\S \ref*{ss_demo-main-thm_reduction-kappa<=0}}} (resp. {\hyperref[ss_demo-main-thm_reduction-det=0]{\S \ref*{ss_demo-main-thm_reduction-det=0}}}). By the reduction procedures above, in order to prove the Part (I\!I\!I) of the {\hyperref[main-thm]{Main Theorem}}, one can assume:
%\begin{itemize}
%\item $\left(\scrF_{m,\Delta}\,,g_{%X/Y\!,\Delta}^{(m)}\right)$ is a Hermitian flat vector bundle,
%\item $\kappa(X,K_X+\Delta)\leqslant 0$,
%\item $Y=T$ is a simple torus;
%\end{itemize}
Furthermore, by a standard argument dated to Kawamata, we are reduced to the case $\kappa(X,K_X+\Delta)\leqslant 0$, i.e. it is enough to prove that $\kappa(F,K_F+\Delta_F)\geqslant 1$ implies $\kappa(X,K_X+\Delta)\geqslant 1$. This reduction relies on the following a log Kähler version of \cite[Theorem 1]{Kaw81}, which follows from \cite[Theorem 4.2]{Cam04} or \cite[Theorem1.7]{Fuj17} (or {\hyperref[thm_Viehweg_Iitaka-type-gen]{Theorem \ref*{thm_Viehweg_Iitaka-type-gen}}} for the klt case):

\begin{mainthm}
\label{thm_Kawamata_Ab-Var}
Let $X$ be a compact Kähler manifold. Suppose that there is an effective $\QQ$-divisor $\Delta$ on $X$ such that $(X,\Delta)$ is log canonical and that $\kappa(X,K_X+\Delta)=0$ 
(i.e. $X$ is bimeromorphically log Calabi-Yau). 
Then the Albanese map $\alb_X:\,X\to \Alb_X$ of $X$ is a surjective morphism with connected fibres. 
\end{mainthm}

The proof of this theorem will be given in {\hyperref[sec_Alb-kod=0]{\S \ref*{sec_Alb-kod=0}}}, it is similar to that of \cite{Kaw81}. In fact, when $\Delta=0$ and $X$ projective, the theorem is proved in \cite{Kaw81}; for $\Delta=0$ and $X$ Kähler a proof is also sketched in \cite[Theorem 24]{Kaw81}, but does not contain enough details. In virtue of \cite[Theorem 1.7]{Fuj17} (or  {\hyperref[thm_Viehweg_Iitaka-type-gen]{Theorem \ref*{thm_Viehweg_Iitaka-type-gen}}} for the klt case) one can easily obtain {\hyperref[thm_Kawamata_Ab-Var]{Theorem \ref*{thm_Kawamata_Ab-Var}}} by following the strategies of \cite{Kaw81}, and it is exactly in this way our proof in {\hyperref[sec_Alb-kod=0]{\S \ref*{sec_Alb-kod=0}}} proceeds. Let us remark that a similar result with $\Delta=0$ for special varieties in the sense of Campana is also stated in \cite{Cam04} where the proof is sketched based on \cite{Kaw81}.

Now we are reduced to show that $\kappa(F,K_F+\Delta_F)\geqslant 1$ implies that $\kappa(X,K_X+\Delta)\geqslant 1$. $\scrF_{m,\Delta}$ being Hermitian flat, it is given by a unitary representation $\rho_m$ of the fundamental group of $T$; $\pi_1(T))$ being Abelian, this representation is decomposed into $1$-dimensional sub-representations. If the image of $\rho_m$ is finite, then one can use the parallel transport to extend pluricanonical sections on $F$ to $X$; if the image of $\rho_m$ is infinite, then a fortiori $\kappa(X,K_X+\Delta)\geqslant 1$ by the following pluricanonical klt Kähler version of the structure theorem on cohomology jumping loci à la Green-Lazarsfeld-Simpson (c.f. \cite{GL91,Sim93}), which consist of another key ingredient of the proof of {\hyperref[main-thm_II]{Main Theorem, Part (I\!I)}}:
\begin{mainthm}
\label{thm_Simpson_pluri-klt}
Let $g:X\to Y$ be a morphism between compact Kähler manifolds. Let $\Delta$ be an effective $\QQ$-divisor on $X$ such that $(X,\Delta)$ is a klt pair. Then for every $m>0$ such that $m\Delta$ is an integral divisor and for every $k>0$, the cohomology jumping locus 
\[
V_k^0\left(g_\ast\left(K_X\ptensor[m]\otimes\scrO_X(m\Delta)\right)\right):=\left\{\;\rho\in\Pic^0(X)\;\big|\;\dimcoh^0(Y,g_\ast(K_X\ptensor[m]\otimes\scrO_X(m\Delta))\otimes\rho)\geqslant k\;\right\}
\]
is a finite union of torsion translates of subtori in $\Pic^0(Y)$.
\end{mainthm}
The study of cohomology jumping loci is initiated by the works of Green-Lazarsfeld \cite{GL87,GL91} which assure that the components of cohomology jumping loci are translates of subtori, and is further developed by Carlos Simpson in \cite{Sim93}, where he proves that these translates are torsion translates. Recently, the main result of \cite{Sim93} is generalized by Botong Wang to the Kähler case in \cite{Wan16}, where he treats the case $g=\id_X$\,, $m=1$ and $\Delta=0$ and this is the starting point of our proof of {\hyperref[thm_Simpson_pluri-klt]{Theorem \ref*{thm_Simpson_pluri-klt}}}. In fact, when $g=\id_X$ and $X$ projective, the proof of the theorem is already implicitly comprised in \cite{CKP12} although they only explicitly state and prove a result corresponding to our {\hyperref[cor_thm_Simpson_pluri-klt]{Corollary \ref*{cor_thm_Simpson_pluri-klt}}} with $X$ smooth projective and $(X,\Delta)$ log canonical by using \cite{Sim93}; we thus follow the strategy in \cite{CKP12} to deduce {\hyperref[thm_Simpson_pluri-klt]{Theorem \ref*{thm_Simpson_pluri-klt}}} from the basic case treated in \cite[Corollary 1.4]{Wan16}. Notice that \cite{Wan16} and hence our {\hyperref[thm_Simpson_pluri-klt]{Theorem \ref*{thm_Simpson_pluri-klt}}} require that $X$ is "globally" Kähler; 
%in consequence one cannot prove the {\hyperref[main-thm]{Main Theorem}} for any Kähler fibre space between compact complex manifolds (without assuming $X$ to be Kähler) although 
by contrast, {\hyperref[thm_pos-im-pluri-can]{Theorem \ref*{thm_pos-im-pluri-can}}} holds for any Kähler fibre space ($X$ is only assumed to be locally Kähler over $Y$). Let us remark that in the hypothesis of $C_{n,m}^{\log}$ it is essential to suppose that $X$ is globally Kähler, in fact \cite[Remark 15.3, p.~187]{Uen75} provides an example of a Kähler fibre space for which $C_{n,m}$ does not hold. 

Let us explain how to finish the proof of {\hyperref[main-thm_II]{Part (I\!I)}} of the {\hyperref[main-thm]{Main Theorem}} from {\hyperref[thm_Simpson_pluri-klt]{Theorem \ref*{thm_Simpson_pluri-klt}}}. By following the argument in \cite{CP11} one easily deduces from {\hyperref[thm_Simpson_pluri-klt]{Theorem \ref*{thm_Simpson_pluri-klt}}} (c.f. {\hyperref[cor_thm_Simpson_pluri-klt]{Corollary \ref*{cor_thm_Simpson_pluri-klt}}}):
\begin{itemize}
\item $K_X+\Delta$ is the most effective $\QQ$-line bundle in its numerical class. 
\item If $\kappa(X,K_X+\Delta)=\kappa(X,K_X+\Delta+L)=0$ for some numerically trivial ($\QQ$-)line bundle $L$, then $L$ is a torsion point in $\Pic^0(X)$. 
\end{itemize}
Now the proof of {\hyperref[main-thm_II]{Main Theorem, Part (I\!I)}} can be finished as follows: if $\Image(\rho_m)$ is infinite, by the decomposition of $\scrF_{m,\Delta}$ one sees that $K_X+\Delta$ has non-negative Kodaira dimension up to twisting a non-torsion numerically trivial ($\QQ$-)line bundle, hence the first point above shows that $\kappa(X,K_X+\Delta)\geqslant 0$; moreover, if $\kappa(X,K_X+\Delta)=0$ then the second point will lead to a contradiction, hence a fortiori $\kappa(X,K_X+\Delta)\geqslant 1$, thus we finish the proof of {\hyperref[main-thm]{Main Theorem}}. As a by-product of the first point above, we can prove the Kähler version of the (generalized) log Abundance Conjecture in the case of numerical dimension zero (c.f. {\hyperref[thm_Abundance_kod=0]{Theorem \ref*{thm_Abundance_kod=0}}}) by using the divisorial Zariski decomposition obtained in \cite{Bou04} (c.f.\cite[Definition 3.7]{Bou04}) .

Let us remark that one can follow the same strategies in \cite[\S 5]{CP17} to prove more generally that the $C_{n,m}^{\log}$ is true if $\det\!\scrF_{m,\Delta}$ is numerically trivial for some $m\in\ZZ_{>0}$ (i.e. the Kähler version of \cite[Theorem 5.6]{CP17}) by using the remarkable result of Zuo in \cite[Corollary 1]{Zuo96}. In this article, however, we will not further pursue in this direction.

Finally by using an induction argument and by applying the results already obtained we generalize {\hyperref[main-thm_II]{Part (I\!I)}} of the {\hyperref[main-thm]{Main Theorem}} to the geometric orbifold setting:
\begin{mainthm}
\label{main-thm_orbifold}
Let $f:X\to T$ be a surjective morphism with $X$ compact Kähler manifold and $T$ complex torus, such that the general fibre $F$ of $f$ is connected. And let $\Delta$ be an effective $\QQ$-divisor on $X$ such that $(X,\Delta)$ is klt. %Let $\Delta(f,B)$ 
Then 
\[
\kappa(X,K_X+\Delta)\geqslant \kappa(F,\Delta_F)+\kappa(T,B_{f\!,\Delta}).
\]
where $\Delta_F:=\Delta|_F$ and $B_{f\!,\Delta}$ denotes the branching divisor on $T$ w.r.t $f$ and $\Delta$. 
\end{mainthm}

In the theorem above, the branching divisor is defined as following: for any analytic fibre space $f: (X,\Delta)\to Y$ between compact complex manifolds with $\Delta$ an effective $\QQ$-divisor on $X$, the branching divisor $B_{f\!,\Delta}$ with respect to $f$ and $\Delta$ is defined as the most effective $\QQ$-divisor on $Y$ such that $f^\ast B_{f\!,\Delta}\leqslant R_{f\!,\Delta}$ modulo exceptional divisors, where the ramification divisor w.r.t. $f$ and $\Delta$ is defined as $R_{f\!,\Delta}:=\Sigma_f+\Delta$ and
\[
\Sigma_f:=\sum_{f(W)\text{ is a divisor on }Y}(\Ramification_W(f)-1)W
\]
with $\Ramification_W(f)$ denoting the  ramification (in codimension $1$) index of $f$ along $W$. Precisely, assume the singular locus of $f$ is contained in a (reduced) divisor $\Sigma_Y\subseteq Y$ and write
\[
f^\ast\Sigma_Y=\sum_{i\in I}b_iW_i\,,
\]
where $W_i$ are prime divisors on $X$, then for $i\in I^{\divisor}$ where
\[
I^{\divisor}:=\text{ set of indices }i\in I\text{ such that }f(W_i)\text{ is a divisor on }Y,
\]
we have $b_i=\Ramification_{W_i}(f)$ and thus
\[
\Sigma_f=\sum_{i\in I^{\divisor}}(b_i-1)W_i\,.
\]
Let us remark that the above definition of $B_{f\!,\Delta}$ coincides with \cite[Definition 1.29]{Cam04} (orbifold base) when $\Delta$ is lc on $X$, c.f. {\hyperref[sec_orbifold]{\S \ref*{sec_orbifold}}}.

The organization of the article is as follows. In {\hyperref[sec_preliminary]{\S \ref*{sec_preliminary}}} we recall some preliminary results which may be of independent interest; especially, the definition of semi-positively curved singular Hermitian metrics and that of the $L^2$ extension property are formulated in {\hyperref[ss_preliminary_metric]{\S \ref*{ss_preliminary_metric}}}. {\hyperref[sec_pos-im-dir]{\S \ref*{sec_pos-im-dir}}} is dedicated to the construction of the $m$-Bergman kernel metric on the adjoint line bundle  and of the canonical $L^2$ metric on direct images as well as the proof of {\hyperref[thm_pos-im-pluri-can]{Theorem \ref*{thm_pos-im-pluri-can}}}. {\hyperref[main-thm_I]{Main Theorem, Part (I)}} and $C_{n,m}^{\log}$ for general type bases are established in {\hyperref[ss_Kah-Viehweg-Kawamata_type-gen]{\S \ref*{ss_Kah-Viehweg-Kawamata_type-gen}}} and {\hyperref[ss_Kah-Viehweg-Kawamata_det-gros]{\S \ref*{ss_Kah-Viehweg-Kawamata_det-gros}}} respectively. And the proof of {\hyperref[thm_Kawamata_Ab-Var]{Theorem \ref*{thm_Kawamata_Ab-Var}}} is done in {\hyperref[sec_Alb-kod=0]{\S \ref*{sec_Alb-kod=0}}}. The general definition of cohomology jumping loci for coherent sheaves, as well as the proof of {\hyperref[thm_Simpson_pluri-klt]{Theorem \ref*{thm_Simpson_pluri-klt}}} will be given in {\hyperref[sec_Simpson]{\S \ref*{sec_Simpson}}}, where {\hyperref[cor_thm_Simpson_pluri-klt]{Corollary \ref*{cor_thm_Simpson_pluri-klt}}} and {\hyperref[thm_Abundance_kod=0]{Theorem \ref*{thm_Abundance_kod=0}}} are proved in {\hyperref[ss_Simpson_CKP]{\S \ref*{ss_Simpson_CKP}}}. In {\hyperref[sec_demo-main-thm]{\S \ref*{sec_demo-main-thm}}} we complete the proof of the {\hyperref[main-thm_II]{Part (I\!I)}} of our {\hyperref[main-thm]{Main Theorem}}. And finally the geometric orbifold version {\hyperref[main-thm_orbifold]{Theorem \ref*{main-thm_orbifold}}} is established in {\hyperref[sec_orbifold]{\S \ref*{sec_orbifold}}}.

\paragraph{Acknowledgement}
The author owes a lot to his thesis advisors Sébastien Boucksom and Junyan Cao, who have given him enormous help to accomplish this work as well as to abolish the present article. He is also grateful to Daniel Barlet, Nero Budur, Frédéric Campana, Benoit Claudon, Andreas Höring, S\'andor Kov\'acs, Mihnea Popa and Chenyang Xu for helpful discussions about this work. Also the author would like to take this opportunity to acknowledge the support he has benefited from the ANR project "GRACK" during the preparation of the present article.
\section{Preliminary Results}
\label{sec_preliminary}
In this section, we collect miscellaneous results which not only serve our main purpose but also are of independent interest. 

\subsection{An Analytic Geometry Toolkit}
\label{ss_preliminary_toolkit}
In this subsection we state some auxiliary results which are well-known in algebraic geometry, but whose analytic versions, as far as we know, have not yet been well formulated in literatures; we will not give the detailed proofs but instead indicate how to get rid of the algebraicity hypothesis.

\paragraph{\quad (A) A Covering Lemma \\ }
First we state a covering lemma which allow us to reduce problems on pluricanonical bundles to the case of the canonical bundle.
\begin{lemme}%[le \og covering trick \fg]
\label{lemme_cov-trick}
Let $X$ be compact complex manifold. 
%et $\Delta$ un $\QQ$-diviseur effectif tel que $(X,\Delta)$ soit klt. 
and let $L$ be a line bundle on $X$ such that $\kappa(X,L)\geqslant 0$. Suppose that there exists an integer $m>0$ such that there exists an effective divisor $D\in\left|L\ptensor[m]\right|$ whose support is SNC. Then there is a complex manifold $V$ admitting a surjective generically finite projective morphism $f:V\to X$ such that the direct image of $K_V$ admits a direct decomposition: 
\[
f_\ast K_V\simeq\bigoplus_{i=0}^{m-1}K_X\otimes L\ptensor[i]\otimes\scrO_X(-\bigl\lfloor\frac{i}{m}D\bigr\rfloor).
\]
%Si de plus $X$ est kählérien (resp. projectif), nous pouvons faire de $V$ aussi. 
\end{lemme}

The construction of $f$ is done by taking a cyclic cover along $D$ followed by a desingularization. This construction is standard. However, there are three main ingredients in this construction that need to be clarified:
\begin{itemize}
\item[\rm(a)]\label{rmq_lemme_cov-trick_a} The construction of cyclic covers:  c.f. \cite[\S 4.1.B, pp.~242-243, vol.I]{Laz04} and \cite[\S 2.9, p.~9]{Kol97}, which can be easily generalized to the analytic case.
\item[\rm(b)]\label{rmq_lemme_cov-trick_b} Viehweg's results on rational singularities in \cite{Vie77}:
\begin{itemize}
\item[\rm(b1)]\label{rmq_lemme_cov-trick_b1} A finite ramified cover over a smooth projective variety with the cover space being normal and the branching locus being a SNC divisor, has quotient singularities (\cite[Lemma 2]{Vie77}); in this case, the singularity is toroidal, and the result is standard from \cite{KKMS73}.
\item[\rm(b2)]\label{rmq_lemme_cov-trick_b2} A quotient singularity is a rational singularity (\cite[Proposition 1]{Vie77}). This follows from Kempf's criterion on rationality of singularities (c.f. \cite[\S I.3, condition (d)(e) pp.~50-51]{KKMS73}), which is essentially a analytic result. 
  
\end{itemize}
\item[\rm(c)]\label{rmq_lemme_cov-trick_c} A duality theorem for canonical sheaves (the canonical sheaf of a complex variety is defined as the $(-d)$-th cohomology of the dualizing complex, where $d$ denotes the dimension of the complex variety) on singular complex varieties, which can be proved by applying \cite{RR70} or \cite{BS76} combined with a spectral sequence argument.
\end{itemize}

\begin{rmq}
\label{rmq_lemme_cov-trick}
For later use, we remark that the point {\hyperref[rmq_lemme_cov-trick_b2]{(b2)}} above can be further generalized to higher relative dimension %by localizing in codimension $1$ 
by a local computation as in \cite[Lemma 3.6]{Vie83} and by \cite{KKMS73}: for $f:X\to Y$ be a proper flat morphism between complex manifolds such that the singular locus $\Sigma_Y\subseteq Y$ is a smooth divisor and the preimage $f^\ast\Sigma_Y$ is a {\em reduced} SNC divisor, then for any surjective morphism $\phi: Y'\to Y$ with $Y'$ smooth, the fibre product $X\underset{Y}{\times}Y'$ has (at most) rational singularities. C.f. also \cite[3.13.Lemma]{Hor10}.      
\end{rmq}

\paragraph{\quad (B) The Negativity Lemma \\ }
The negativity lemma is an important tool in the study of the classification theory of complex varieties. It is already well known in the algebraic case, c.f. \cite[Lemma 3.39, p.~102-103]{KM98}. By following the strategy of \cite[Proposition 2.12]{BdFF12} one can prove the following analytic version (for the convenience of the readers, we provide a proof in {\hyperref[appendix_sec_neg-lemma]{Appendix \ref*{appendix_sec_neg-lemma}}}):

\begin{lemme}[Negativity Lemma]
\label{lemme_neg}
Let $h: Z\to Y$ be a proper bimeromorphic morphism between normal complex varieties. Let $B$ be a Cartier divisor on $Z$ such that $-B$ is $h$-nef. Then $B$ is effective if and only if $h_\ast B$ is effective.   
\end{lemme}
%\begin{proof}
%Firstly let us remark that up to replace $B$ by a multiple, one can assume that $B$ is Cartier. 

%\end{proof}

\paragraph{\quad (C) A Flattening Lemma \\ }
In order to prove {\hyperref[main-thm]{Part (I)}} of the {\hyperref[main-thm]{Main Theorem}} we need the following auxiliary result, which is an analytic version of \cite[Lemma 7.3]{Vie83}\,:
\begin{lemme}
\label{lemme_Viehweg-aplatissement}
Let $p:V\to W$ a morphism of complex manifolds, then there exists a commutative diagram
\begin{center}
\begin{tikzpicture}[scale=2.0]
\node (A) at (0,0) {$W$};
\node (B) at (0,1) {$V$};
\node (A') at (-1,0) {$W'$};
\node (B') at (-1,1) {$V'$};
\path[->,font=\scriptsize,>=angle 90]
(B) edge node[right]{$p$} (A)
(B') edge node[left]{$p'$} (A')
(A') edge node[below]{$\pi_W$} (A)
(B') edge node[above]{$\pi_V$} (B);
\end{tikzpicture}
\end{center}
with $V'$ and $W'$ complex manifolds, the morphisms $\pi_W$ and $\pi_V$ projective and bimeromorphic such that the morphism $p'$ verifies the following propriety: every $p'$-exceptional (i.e. $\codim_{W'}p'(D')\geqslant 2$) divisor $D'$ de $V'$   is $\pi_V$-exceptional (i.e. $\codim_V(\pi_V(D'))\geqslant 2$). In addition, we can further assume that
\begin{itemize}
\item[\rm(a)] $\pi_W$ is an isomorphism over $W_0$, the (analytic) Zariski open subset of$W$ over which $p$ is smooth;
\item[\rm(b)] $\pi_V$ is an isomorphism over $p\inv W_0$\,; 
\item[\rm(c)] $\Sigma_{W'}:=\pi_W\inv(W\backslash W_0)$ and $p'^\ast\Sigma_{W'}$ are divisors of SNC support. 
%où $W'_0:=\pi_W\inv W_0$
\end{itemize}
\end{lemme}
\begin{proof}
This is simply a consequence of \cite[Flattening Theorem]{Hir75}. 
\end{proof} 

In the sense of \cite{Cam04}, the lemma above shows that any fibre space admits a (higher) bimeromorphic model which is {\em neat} and {\em prepared} (c.f. \cite[\S 1.1.3]{Cam04}). Moreover, {\hyperref[lemme_Viehweg-aplatissement]{Lemma \ref*{lemme_Viehweg-aplatissement}}} is well behaved with respect to klt/lc pairs, as implies the following fact:
 
\begin{lemme}
\label{lemme_preservation-klt}
Let $X$ be a complex manifold
%\footnote{La lissité de $X$ est bien sûr non-nécessaire ici, il suffit de supposer que $X$ est normal (dans ce cas $K_X$ est défini comme un faisceau réflexif) et qu'il existe un entier $m>0$ tel que le faisceau réflexif $\left[K_X\ptensor[m]\otimes\scrO_X(m\Delta)\right]^\wedge$ soit localement libre.} 
and $\Delta$ an effective $\QQ$-divisor on $X$ such that the pair $(X,\Delta)$ is klt (resp. lc). For any log resolution $\mu:X'\to X$ of $(X,\Delta)$, there is an effective $\QQ$-divisor $\Delta'$ over $X'$ with SNC support such that the pair $(X',\Delta')$ is also klt (resp. lc) and that $\mu_\ast\Delta'=\Delta$.
\end{lemme}
\begin{proof}
The pair $(X,\Delta)$ being klt, we can write (an isomorphism of $\QQ$-line bundles):
%\[
%K_{X'}\ptensor[m]\otimes\scrO_{X'}(m\mu\inv_\ast\!\!\Delta-\sum_{a_i<0}ma_iE_i) \simeq \mu^\ast\left(K_X\ptensor[m]\otimes\scrO_X(m\Delta)\right)\otimes\scrO_{X'}(\sum_{a_i>0}ma_iE_i)\,,
%\]
\[
K_{X'}+\mu\inv_\ast\!\!\Delta-\sum_{a_i<0}a_i E_i\simeq \mu^\ast(K_X+\Delta)+\sum_{a_i>0}a_i E_i,
\]
%where $m\in\ZZ_{>0}$ such that $m\Delta$ entier, 
where the $E_i$'s are $\mu$-exceptional prime divisors and 
\[
a_i:=a(E_i,X,\Delta)
\]
denotes the discrepancy of $E_i$ with respect to the pair $(X,\Delta)$. Put
\[
\Delta':=\mu\inv_\ast\!\!\Delta-\sum_{a_i<0}a_iE_i\,,
\]
then $\Delta'$ is an effective $\QQ$-divisor with SNC support and $\mu_\ast\Delta'=\Delta$. The hypothesis that $(X,\Delta)$ is klt (resp. lc) implies that $a_i>-1$ (resp. $a_i\geqslant-1$) for every $i$ and that the coefficients of prime components in $\Delta$ are $<1$ (resp. $\leqslant 1$), hence the coefficients of the prime components in $\Delta'$ are all $<1$ (resp. $\leqslant 1$). By \cite[Corollary 2.31(3), p.~53]{KM98} the pair $(X',\Delta')$ is klt (resp. lc).
\end{proof}

\subsection{Griffiths Semi-positive Singular Hermitian Metrics on Vector Bundles / Torsion Free Sheaves}
\label{ss_preliminary_metric}
In this subsection we will recall the notion of Griffiths semi-positively curved singular Hermitian metrics on vector bundles / torsion free sheaves. C.f. \cite[2.1 et 2.2]{CH17b} for a generalization of this semi-positivity notion. Let us fix $X$ a complex manifold.

%\begin{defn}
%\label{defn_fv-metrique}
%Let $X$ be a complex manifold, and let $E$ be a holomorphic vector bundle on $X$. A singular Hermitian metric $h$ on $E$ is a function on $X$ which associate a point $x\in X$ to an singular (i.e. possibly degenerate and non-bounded) semi-positive definite Hermitian form $h_x: E_x\to[0,+\infty]$ such that
%\begin{itemize}
%\item[(i)] locally $h$ is given by a measurable map with values in the set of singular semi-positive definite Hermitian forms on $\CC^r$; 
%\item[(i\!i)] $h_x$ is an (non-singular, i.e. positive definite and bounded) Hermitian scalar product on $E_x$ for $x\in X$ almost everywhere, or equivalently the singular Hermitian metric $\det\!h$ induced on the line bundle $\det\!E$ satisfies $0<\det\!h<+\infty$ almost everywhere.
%\end{itemize}
%\end{defn}

%Cette notion généralise la notion usuelle de la semi-positivité de Griffiths pour les fibrés vectoriels munis d'une métrique hermitienne lisse (c.f. par exemple \cite[\S 10]{Dem10}).  

\begin{defn}
\label{defn_fv-pos}
Let $E$ be holomorphic vector bundle on $X$. A (Griffiths) semi-positively curved singular Hermitian metric $h$ on $E$ is given by a measurable family of Hermitian functions on each fibre of $E$, such that for every (holomorphic) local section $s\in\Coh^0(U,E^\ast)$ of the dual bundle $E^\ast$, the function $\log|\sigma|^2_{h^\ast}$ is psh on $U$. The vector bundle $E$ is said semi-positively curved if it admits a semi-positively curved singular metric.
\end{defn}

\begin{rmq}
\label{rmq_defn_tf-pos}
This definition implies that $h$ is bounded almost everywhere, moreover, fix any smooth Hermitian metric $h_0$ on $E$, then
as a consequence of
{\cite[2.10.Remark, 2.18.Remark]{Pau16}}
the singular metric $h$ is locally uniformly bounded from below by $C\cdot h_0$ for some constant $C>0$.
\end{rmq}

The semi-positivity of singular Hermitian metrics is preserved by tensor products , pull-back by proper surjective morphisms, and by generically surjective morphisms of vector bundles (thus by symmetric products and wedge products), c.f. \cite[II.B.4]{GG18} and \cite[2.14.Lemma, 2.15.Lemma]{Pau16}. Moreover one has the following extension theorem for semi-positively curved singular Hermitian metrics:
\begin{prop}[c.f. {\cite[2.4.Proposition]{CH17b}}] 
%\cite[Lemma 18.3]{HPS18}}
\label{prop_fv-pos}
Let $E$ be a holomorphic vector bundle on $X$. Suppose that there is a (analytic) Zariski open subset $X_0\neq\varnothing$ of $X$ and a semi-positively curved singular Hermitian metric $h$ on $E|_{X_0}$. Then $h$ extends to a semi-positive singular Hermitian metric on $E$ if one the following two conditions is verified:
\begin{itemize}
\item[\rm(1)] $\codim(X\backslash X_0)\geqslant 2$;
\item[\rm(2)] $h$ is locally uniformly bounded below by a constant $C>0$ on $X_0$ with respect to some smooth Hermitian metric on $E$. \end{itemize}
\end{prop}
 
In virtue of {\hyperref[prop_fv-pos]{Proposition \ref*{prop_fv-pos}}} and \cite[Corollary 5.5.15, p.~147]{Kob87} one can extend %{\hyperref[defn_fv-metrique]{Definition \ref*{defn_fv-metrique}}} and
{\hyperref[defn_fv-pos]{Definition \ref*{defn_fv-pos}}} to torsion free sheaves:
\begin{defn}
\label{defn_tf-pos}
Let $X$ be a complex manifold and let $\scrF$ be a torsion free sheaf on $X$. %Denote by $X_{\scrF}$ the maximal (analytic) Zariski open subset of $X$ over which $\scrF$ is locally free 
By \cite[Corollary 5.5.15, p.~147]{Kob87}, $\scrF$ is locally free in codimension $1$
%i.e. $\codim_X(X\backslash X_{\scrF})\geqslant 2$. 
A semi-positively curved singular Hermitian metric $h$ on $\scrF$ is a semi-positively curved singular Hermitian metric on $\scrF|_U$ for some (analytic) Zariski open subset $U$ such that $\codim_X U\geqslant 2$ and $\scrF|_U$ locally free. The torsion free sheaf $\scrF$ said semi-positively curved if it admits a semi-positively curved singular Hermitian metric. 
\end{defn}

Let $\scrF$ and $h$ as above, then $h$ induces a semi-positively curved singular Hermitian metric $\det\!h$ on the line bundle $\det\!\scrF$ where the determinant bundle $\det\!\scrF$ is defined as
\[
\det\!\scrF:=\left(\bigwedge^{r}\scrF\right)^\wedge
\]
with $r=\rank\scrF$ and $()^\wedge=()^{\ast\ast}$ denotes the reflexive hull (c.f. \cite[\S 5.6, pp.~149-154]{Kob87}). 

We end this subsection by two regularity theorems:
\begin{thm}
\label{thm_pos-reg}
Let $(E,h)$ be a holomorphic vector bundle on $X$ equipped with a semi-positively curved singular Hermitian metric $h$.
%$h$ induces a singular Hermitian metric $\det\!h$ on $\det\!\scrE$ whose curvature current $\sqrt{-1}\Theta_{\det\!h}$ is positive. Moreover if $\Theta_{\det\!h}$ is a $\mathcal{C}^\infty$ $(1,1)$-form, then the metric $\det\!h$ is also smooth. 
Suppose that the metric $\det\!h$ is locally bounded from above, then the coefficients of the Chern connection form $\theta_E$ (defined by the equation $h\theta_E=\partial h$) are $L^2_{\text{loc}}$ on $U$, and in consequence the total curvature current $\Theta_{h}(E)$ of $E$ is well defined and semi-positive in the sense of Griffiths, which can be locally written as $\Theta_{h}(E)=\bar\partial\theta_E$\,. In particular, if the curvature current $\Theta_{\det\!h}$ vanishes, then $(E,h)$ is Hermitian flat.
\end{thm}
\begin{proof}
The theorem is proved in \cite[Theorem 1.6]{Rau15} by an approximation argument (c.f. also \cite[2.25.Theorem, 2.26.Corollary]{Pau16}). Heuristically, this is a higher rank version of the well known fact (the line bundle case) that if a psh function $\phi$ is $L^\infty_{\text{loc}}$, then $\nabla\phi$ is $L^2_{\text{loc}}$. As for the last statement (c.f. \cite[2.26.Corollary]{Pau16} and \cite[2.7.Theorem]{CP17}): by our first statement the total curvature current $\Theta_h(E)$ is well defined and Griffith semi-positive, then the vanishing of $\Theta_{\det\!h}$ implies the vanishing of $\Theta_h(E)$; the regularity of $h$ results from the ellipticity of the Laplacian $\partial\bar\partial$.
\end{proof}
 
In the sequel we introduce the notion of "$L^2$-extension property", which is simply an analogue of the property of $\scrO$ that every $L^2$ holomorphic function extends. It helps to exclude certain unexpected pathology, e.g. the ideal sheaf $\scrI_Z$ of a analytic subset $Z$ of codimension $\geqslant 2$ admits a natural semi-positively curved singular Hermitian metric but does not satisfy the $L^2$ extension property. 
 
\begin{defn}
\label{defn_reflexivite-L2}
Let $\scrF$ be a torsion free sheaf on $X$ equipped with a singular Hermitian metric $h$. $h$ is said  to satisfy the "$L^2$-extension property" if for any open subset $U\subseteq X$, for any $Z\subsetneqq U$ analytic subset of $U$ and for any $\sigma\in\Coh^0(U\backslash Z,\scrF)$ such that 
\[
\int_U |\sigma|_h^2d\mu<+\infty\,,
\]
the section $\sigma$ extends (uniquely) to a section $\bar\sigma\in\Coh^0(U,\scrF)$.
\end{defn}
 
This propriety is particularly useful when we consider a torsion free sheaf whose determinant bundle is numerically trivial. In fact, we have the following:
%consider an injection of a torsion free sheaf into a vector bundle. In fact we have:
%\begin{prop}
%\label{prop_reflexivite-L2-injection}
%Let $X$ be a complex manifold and $(\scrF,h)$ be a torsion free sheaf equipped with a $L^2$-reflexive singular Hermitian metric $h$. Suppose that $\scrF$ admits an injection $i:\scrF\hookrightarrow E$ into a Hermitian vector bundle $(E,h_E)$ such that $i$ is an isometry in codimension $1$. Then $i$ is an isomorphism.    
%\end{prop}

%As a corollary we have the following result concerning a semi-positively curved Hermitian metric (over a torsion free sheaf) whose determinant's curvature vanishes:
\begin{thm}
\label{thm_det-triv_herm-plat}
Let $X$ be a connected complex manifold and let $\scrF$ be a torsion free sheaf of rank $r$ on $X$ equipped with a semi-positively curved singular Hermitian metric $h$. Suppose that
\begin{itemize}
\item[\rm(1)]  $\det\!\scrF$ is numerically trivial, i.e. $c_1(\det\!\scrF)=c_1(\scrF)=0$;
\item[\rm(2)] $h$ satisfies the $L^2$-extension property as in {\hyperref[defn_reflexivite-L2]{Definition \ref*{defn_reflexivite-L2}}}.
\end{itemize}
Then $(\scrF\!,h)$ is a Hermitian flat vector bundle.
\end{thm}
\begin{proof}
The proof is essentially analogous to that  of \cite[Theorem 5.2]{CP17}. Since $h$ is semi-positively curved, the metric $\det\!h$ on $\det\!\scrF$ is semipositively curved, thus the curvature current $\Theta_{\det\!h}(\det\!\scrF)$ is positive; but $\det\!\scrF$ is numerically trivial, hence a fortiori $\Theta_{\det\!h}(\det\!\scrF)=0$. Then by {\hyperref[thm_pos-reg]{Theorem \ref*{thm_pos-reg}}}, $(\scrF|_{X_{\scrF}},h|_{X_{\scrF}})$ is a Hermitian flat vector bundle (i.e. $h|_{X_{\scrF}}$ is a smooth Hermitian metric $\scrF|_{X_{\scrF}}$ whose curvature vanishes). 
%Now take a covering $(U_\alpha)_{\alpha}$ of $X$ consisting of simply connected open subsets, then the flatness of $h|_{X_{\scrF}}$ implies (c.f. \cite[Proposition 1.4.21, p.~13]{Kob87}) that for every $\alpha$ there is a (holomorphic) trivialization $\scrF|_{U_\alpha\cap X_{\scrF}}\xrightarrow{\sim}(U_\alpha\cap X_{\scrF})\times \CC^r$ such that the transition functions $g_{\alpha\beta}:U_\alpha\cap U_\beta\cap X_{\scrF}\to\GL_r(\CC)$ are all constant and equal to unitary matrices; we will denote by $g_{\alpha\beta}$ this matrix. 
By \cite[Proposition 1.4.21, p.~13]{Kob87} the Hermitian flat vector bundle $(\scrF|_{X_\scrF},h|_{X_\scrF})$ is defined by a representation 
\[
\pi_1(X_\scrF)\to \Unitary(r),
\]
$\pi_1(X_{\scrF})$ being isomorphic to $\pi_1(X)$, this extends to a representation
\[
\pi_1(X)\to \Unitary(r),
\]
which gives rise to a Hermitian vector bundle $(E,h_E)$ of rank $r$ on $X$. Then by construction we have an isometry
 \[
 \phi:\scrF|_{X_{\scrF}}\to E|_{X_{\scrF}}.
 \]
By reflexivity of $\SheafHom[\scrO_X](\scrF,E)$ this extends to an injection of sheaves $\scrF\hookrightarrow E$ which we still denote by $\phi$. It remains to show that $\phi$ is surjective. The problem being local, we can assume that $X$ is a small open ball, so that $E$ is trivial. Now take $u\in\Coh^0(X,E)$ a holomorphic section of $E$, since $h_E$ is a flat metric (hence smooth), $|u|_{h_E,z}$ is finite for every $z\in X$. $\phi|_{X_{\scrF}}$ being an isometry, there exists a section $v_0\in\Coh^0(X_{\scrF},\scrF)$ such that $i(v_0)=u|_{X_{\scrF}}$ and $|v_0|_{h,z}=|u|_{h_E,z}<+\infty$ for all $z\in X_{\scrF}$. But $(\scrF\!,h)$ satisfies the $L^2$ extension property, $v_0$ extends to a section $v\in\Coh^0(X,\scrF)$, thus $\phi(v)=u$, implying the surjectivity of $\phi$. 
\end{proof}
Let us remark that the condition on the $L^2$ extension property is indispensable in the theorem above. For example, as mentioned above, the ideal sheaf $\scrI_Z$ of an analytic subset $Z$ of codimension $\geqslant 2$ admits a natural semi-positively curved singular Hermitian metric $h_{\!\scrI_Z}$, which equals to the flat metric of $\scrO$ on $X\backslash Z$. The determinant of $\scrI_Z$ is trivial, but definitely $\scrI_Z$ is not a (Hermitian flat) vector bundle. Notice that $(\scrI_Z, h_{\!\scrI_Z})$ does not satisfy the $L^2$ extension property: let $B$ be a small ball in $X$ meeting $Z$, then non-zero constant functions on $B\backslash Z$ (which are $L^2$) cannot extend across $Z$.

\subsection{Reflexive Hull of the Direct Image of Line Bundles}
\label{ss_preliminary_ref-hull}
In this subsection we will prove the following theorem,  which is nothing but an analytic version of \cite[III.5.10.Lemma, pp.~107-108]{Nak04}. The proof of the theorem is not essentially different from that in \cite{Nak04}; except that, for the analytic case, one has to modify the arguments, especially in the {\hyperref[env-ref_etape2]{Step 2}} below, so that on can avoid the usage of the relative Zariski decomposition (which is not known in analytic case; even in the algebraic case, it is only established in some special cases in \cite{Nak04} and it does not hold in general due to a counterexample in \cite{Les16}). 

\begin{thm}
\label{thm_env-ref}
Let $\pi:X\to S$ be 
%(c-à-d, une application holomorphe) 
a proper surjective morphism between normal complex varieties, and let $L$ be a $\pi$-effective (i.e. $\pi_\ast L\neq 0$) line bundle on $X$. Then there is an effective $\pi$-exceptional (i.e. $\codim_S\pi(E)\geqslant 2$) Weil divisor $E$ such that for any $k\in\ZZ_{>0}$ one has
\begin{equation}
\label{eq_env-ref}
\left[\pi_\ast\! \left(L\ptensor[k]\right)\right]^{\wedge} \simeq \pi_\ast \left[L\ptensor[k]\otimes \scrO_X(kE)\right]\,.
\end{equation}
\end{thm}
Intuitively the theorem means that the vertical poles of the sections of $L\ptensor[k]$ are linearly bounded.
%L'idée de la démonstration est la suivante: d'abord notons que si $\pi$ est plat alors $\pi_\ast(L\ptensor{k})$ est réflexif; au cas général nous pouvons prendre un aplatissement de $\pi$ 
%Maintenant démontrons 
The proof of {\hyperref[thm_env-ref]{Theorem \ref*{thm_env-ref}}} proceeds in five steps:
%\begin{description}

\paragraph{Step 0}
First let us remark that we can always assume that $X$ is smooth by taking a desingularization by the following observation 
\begin{lemme}
\label{lemme_incl-div-birat}
Let $h:Z\to Y$ a bimeromorphic morphisme between normal complex varieties. Then for every Weil divisor $D$ on $Z$, we have an inclusion
\[
h_\ast\scrO_Z(D)\subseteq \scrO_Y(h_\ast D).
\]
\end{lemme}
\begin{proof}
Since $h$ is an isomorphism over a(n) (analytic) Zariski open subset of codimension $\geqslant 2$ in $Y$, $h_\ast\scrO_Z(D)$ and $\scrO_Y(h_\ast D)$ are isomorphic in codimension $1$; $h_\ast\scrO_Z(D)$ being torsion free and $\scrO_Y(h_\ast D)$ reflexive, we have (noting that on a normal complex variety reflexive sheaves are determined in codimension $1$): 
\[
h_\ast\scrO_Z(D)\hookrightarrow\left(h_\ast\scrO_Z(D)\right)^\wedge\simeq\scrO_Y(h_\ast D)\,.
\]
\end{proof}
In fact, assume that {\hyperref[thm_env-ref]{Theorem \ref*{thm_env-ref}}} holds for $X$ smooth, let us prove that it holds in general. To this end, let $\mu:X'\to X$ be a desingularization of $X$, then by our assumption, there is an effective divisor $E'$ on $X'$ such that 
\[
\left[\pi'_\ast\left(\mu^\ast L\ptensor[k]\right)\right]^\wedge=(\pi')_\ast\left[\mu^\ast L\ptensor[k]\otimes\scrO_{X'}(kE')\right],
\]
hence by {\hyperref[lemme_incl-div-birat]{Lemma \ref*{lemme_incl-div-birat}}} and the projection formula we have
\[
\left[\pi_\ast\left( L\ptensor[k]\right)\right]^\wedge=\pi_\ast\left[L\ptensor[k]\otimes\mu_\ast\scrO_{X'}(kE')\right]\subseteq
\pi_\ast\left[L\ptensor[k]\otimes\scrO_{X}(kE)\right]
\]
where $E:=\mu_\ast E'$; since the two sheaves above are reflexive and isomorphic in codimension $1$, the inclusion is in fact an equality. Consequently, we always assume that $X$ is smooth in the sequel.

\paragraph{Step 1}
\label{env-ref_etape1}
By the coherence of the reflexive hull $(\pi_\ast L)^\wedge$ there is an $\pi$-exceptional divisor $E$ making the equation \eqref{eq_env-ref} holds for $k=1$ (and thus one can choose $E$ such that \eqref{eq_env-ref} holds for a finite number of $k$).
%precisely we have the following lemma:
%\begin{lemme}
%\label{lemme_env-ref}
%Let $\pi:X\to S$ be proper surjectif morphism between normal varieties, and let $L$ be a $\pi$-effective line bundle on $X$. 
%(comme dans le {\hyperref[thm_env-ref]{théorème \ref*{thm_env-ref}}} ci-dessus). 
%Then there exists an effective $\pi$-exceptionnel Weil divisor $E$ such that
%\[
%\left(\pi_\ast L\right)^\wedge\simeq\pi_\ast\left[L\otimes\scrO_X(E)\right]\,.
%\]
%\end{lemme}

%\begin{rmq}
%\label{rmq_lem_env-ref}
%The exceptional divisor $E$ in {\hyperref[lemme_env-ref]{Lemma \ref*{lemme_env-ref}}} above is not unique; in fact for any $E'$  $\pi$-exceptional such that $\Supp(E')=\Supp(E)$ and that $E'\geqslant E$, $E'$ satisfies also the conclusion of {\hyperref[lemme_env-ref]{Lemma \ref*{lemme_env-ref}}}.  
%\end{rmq}

\paragraph{Step 2}
\label{env-ref_etape2}
In virtue of {\hyperref[env-ref_etape1]{\bf Step 1}} we are able to prove the reflexivity criterion below:
\begin{prop}[Reflexivity Criterion]
\label{prop_critere-ref}
Let $\pi:X\to S$ and $L$ as in {\hyperref[thm_env-ref]{Theorem \ref*{thm_env-ref}}}. Suppose that for every effective $\pi$-exceptional divisor $G$, there is a component $\Gamma$ of $G$ such that %$\left[c_1\!(L)+\{G\}\right]|_\Gamma$ 
$\left[L\otimes\scrO_X(G)\right]|_{\Gamma}$ is not $\pi|_\Gamma$-pseudoeffective, then $\pi_\ast L$ is reflexive on $S$. 
\end{prop}

Let us recall the notion of relative pseudoeffectivity for ($\QQ$-)line bundles / Cartier divisors in the analytic setting:
%\begin{defn}
%\label{defn_psef-rel}
Let $p:V\to W$ a proper surjective morphism of analytic varieties and let $L$ be a $\QQ$-line bundle on $V$, then $L$ is said to be $p$-pseudoeffective if its pull-back $L|_{\tilde F}$ is pseudoeffective (c.f. \cite[\S 6.A, (6.2) Definition, p.~47]{Dem10}) where $\tilde F$ denotes a desingularization of the general fibre $F$ of $p$. A $\QQ$-Cartier divisor $D$ on $V$ is said to be $p$-pseudoeffective if its associated $\QQ$-line bundle $\scrO_X(D)$ is so. %\end{defn}

Now return to the proof of the {\hyperref[prop_critere-ref]{Reflexivity Criterion}}\,:
\begin{proof}[Proof of {\hyperref[prop_critere-ref]{Proposition \ref*{prop_critere-ref}}}]
We will show in the sequel that it suffices to prove {\hyperref[lemme_critere-ref]{Lemma \ref*{lemme_critere-ref}}} below. In fact, by {\hyperref[env-ref_etape1]{\bf Step 1}} there is an effective $\pi$-exceptional $E$, such that 
\[
\left(\pi_\ast L\right)^\wedge\simeq\pi_\ast\left[L\otimes\scrO_X(E)\right]\,;
\]
Apply {\hyperref[lemme_critere-ref]{Lemma \ref*{lemme_critere-ref}}} to $E$ and we obtain: 
\[
\pi_\ast L\simeq \pi_\ast\left[L\otimes\scrO_X(E)\right]\simeq\left(\pi_\ast L\right)^\wedge\,,
\]
hence $\pi_\ast L$ is reflexive.
\end{proof}

\begin{lemme}
\label{lemme_critere-ref}
Let $\pi:X\to S$ and $L$ as in {\hyperref[prop_critere-ref]{Proposition \ref*{prop_critere-ref}}}, then for any effective $\pi$-exceptional divisor $B$ on $X$, one has:
\begin{equation}
\label{eq_critere-ref}
\pi_\ast L\simeq \pi_\ast\left[L\otimes\scrO_X(B)\right]
\end{equation}
\end{lemme}
\begin{proof}
$B$ is effective, one can write
\[
B=\sum_{i=1}^r b_i B_i\,,
\]
with $b_i\in\ZZ_{>0}$ and $r\in\NN$ ($r=0$ simply means that $B=0$). Note
\[
b:=\sum_{i=1}^r b_i\,.
\]
Now let us prove \eqref{eq_critere-ref} by induction on $b$\,:
By our hypothesis on $L$ (the condition in {\hyperref[prop_critere-ref]{Proposition \ref{prop_critere-ref}}}), $\exists\, i\in\{\,1,\cdots, r\,\}$ such that $\left[L\otimes\scrO_X(B)\right]|_{B_i}$ is non-$\pi|_{B_i}$-pseudoeffective, thus
\[
(\pi|_{B_i})_\ast\left[L\otimes\scrO_X(B)\right]|_{B_i}=0.
\]
%Similaire à ce que nous avons fait ci-dessus, 
Consider the short exact sequence
\[
0\longrightarrow\scrO_X(-B_i)\longrightarrow\scrO_X\longrightarrow\scrO_{B_i}\longrightarrow 0.
\]
By tensoring with $L\otimes\scrO_X(B)$ and applying the functor $\pi_\ast$ on gets
%\begin{center}
%\begin{tikzpicture}[scale=2.0]
%\node (A) at (-1.4,0) {$0$};
%\node (B) at (0,0) {$\pi_\ast\left[L\otimes\scrO_X(B)\otimes\scrO_X(-B_i)\right]$};
%\node (C) at (2,0) {$\pi_\ast\left[L\otimes\scrO_X(B)\right]$};
%\node (D) at (4.05,0) {$(\pi|_{B_i})_\ast\left[L\otimes\scrO_X(B)\right]|_{B_i}=0\,,$};
%\node (B') at (1,-0.8) {$\pi_\ast\left[L\otimes\scrO_X(B-B_i)\right]$};
%\path[->,>=angle 90]
%(A) edge (B)
%(B) edge (C)
%(C) edge (D)
%(D) edge (E)
%(B') edge (C);
%\path[right hook->,>=angle 90]
%(B) edge (B');
%\end{tikzpicture}
%\end{center}
\[
0\to\pi_\ast\left[L\otimes\scrO_X(B-B_i)\right]\longrightarrow\pi_\ast\left[L\otimes\scrO_X(B)\right]\longrightarrow(\pi|_{B_i})_\ast\left[L\otimes\scrO_X(B)\right]|_{B_i}=0
\]
hence $\pi_\ast\left[L\otimes\scrO_X(B-B_i)\right]\simeq \pi_\ast\left[L\otimes\scrO_X(B)\right]$\,. Apply the induction hypothesis we obtain that $\pi_\ast\left[L\otimes\scrO_X(B-B_i)\right]\simeq \pi_\ast L$, 
%(la somme des coefficients de $B-B_i$ est égale à $b-b_i$ qui est strictement inférieure à $b$) 
which proves the isomorphism (\ref{eq_critere-ref}). 
%\end{itemize}
\end{proof}

%\begin{rmq}
%\label{rmq_prop_critere-ref}
%Par la démonstration ci-dessus, nous voyons facilement que le résultat dans la proposition \ref{prop_critere-ref} (ainsi que le lemme \ref{lemme_critere-ref}) reste vrai encore si on remplace le fibré en droites $L$ par $L$ tordu par un diviseur de Weil effectif (ce qui est un faisceau réflexif $\pi$-effectif de rang $1$)\,.
%\end{rmq}

\paragraph{Step 3}
\label{env-ref_etape3}
Let $\pi:X\to S$ and $L$ a $\pi$-effective line bundle on $X$ as in {\hyperref[thm_env-ref]{Theorem \ref*{thm_env-ref}}}. The problem begin local, one can replace $X$ (resp. $S$) by a neighbourhood of a compact in $X$ (resp. in $S$); in particular the set of $\pi$-exceptional prime divisors, denoted par $\scrE\!\!\text{\calligra xc}\,(\pi)$, is a finite set, and thus we can write:
\[
\scrE\!\!\text{\calligra xc}\,(\pi)=\left\{\,\Gamma_1,\cdots,\Gamma_t\,\right\}
\]
In virtue of the {\hyperref[prop_critere-ref]{Reflexivity Criterion}}, we prove that in order to prove {\hyperref[thm_env-ref]{Theorem \ref*{thm_env-ref}}} it suffices to search an effective $\pi$-exceptional divisor $E$  such that $E|_{\Gamma_i}$ is non-$\pi|_{\Gamma_i}$-pseudoeffective (The existence of such $E$ will be proven in the next step). 

\subparagraph{1.}
\label{env-ref_etape3-1}
$E$ being $\pi$-exceptional effective, we can write
\[
E=\sum_{i=1}^t a_i\Gamma_i\,,\;a_i\in\ZZ_{\geqslant 0}\,.
\]
We claim that the $a_i$'s are all strictly positive. Otherwise, there exists a $j$ such that $a_j=0$, implying that $\Gamma_j\not\subseteq\Supp(E)$, 
%%%%de plus $X$ est normal donc $\Gamma_j\not\subseteq\Supp(E)\cup X_{\sing}$ ($\codim_X X_{\sing}\geqslant 2$)\,, 
%et du coup $(\Gamma_j)_s\not\subseteq\Supp(E)$
%$\cup X_{\sing}$ 
%pour un point général $s$ dans $\pi(\Gamma_j)$,
%%%%%ainsi nous pouvons appliquer le corollaire \ref{cor_pullback-weil-psef} à l'inclusion $(\Gamma_j)_s\hookrightarrow X$ pour obtenir que 
%donc $E|_{(\Gamma_j)_s}$ est pseudo-effective, i.e. $E_{\Gamma_j}$ est $\pi|_{\Gamma_j}$-pseudo-effective, 
then $E|_{\Gamma_j}$ is an effective divisor, in particular it is $\pi|_{\Gamma_j}$-pseudoeffective, contradicting the hypothesis on $E$. 

\subparagraph{2.}
\label{env-ref_etape3-2}
Moreover claim that there is a $b\in\ZZ_{>0}$ such that $\forall\beta\geqslant b\,,\beta\in\QQ_{>0}$\,, $\left.\big(L+\beta E\big)\right|_{\Gamma_i}$ is a $\QQ$-line bundle which is non-$\pi|_{\Gamma_i}$-pseudoeffective for all $i=1,2,\cdots,t$. Otherwise there is a sequence of positive rational numbers $\beta_n\rightarrow +\infty$ such that for every $n$, $\left.\big(L+\beta_n E\big)\right|_{\Gamma_{i_n}}$ is a $\pi|_{\Gamma_{i_n}}$-pseudoeffective $\QQ$-line bundle for some $i_n$\,. In general $i_n$ should depend on $n$, but $\scrE\!\!\text{\calligra xc}\,(\pi)$ being a finite set, there must exist an $i$ appearing an infinity of times in the sequence $(i_n)_{n>0}$, thus up to taking a subsequence one can suppose that there exists an $i$ such that $\left.\big(L+\beta_n E\big)\right|_{\Gamma_i}$ is a $\pi|_{\Gamma_i}$-pseudoeffective $\QQ$-line bundle for every $n$. Hence
\[
\left. \left(E+\frac{1}{\beta_n}L\right) \right|_{\Gamma_i}
\]
is an $\pi|_{\Gamma_i}$-pseudo-effective $\QQ$-line bundle for every $n$. This implies (by letting $\beta_n\rightarrow +\infty$) that $E|_{\Gamma_i}$ is $\pi|_{\Gamma_i}$-pseudoeffective, contradicting to the point {\hyperref[env-ref_etape3-1]{\bf 1}} above.

\subparagraph{3.}
\label{env-ref_etape3-3}
Let us set
\[
L_k=L\ptensor[k]\otimes\scrO_X(kbE)\,,
\]
then in order to prove {\hyperref[thm_env-ref]{Theorem \ref*{thm_env-ref}}} we only need to show that $\pi_\ast L_k$ is reflexive. In fact, since $S$ is normal, and since $\pi_\ast\!\left(L^{\otimes k}\right)$ and $\pi_\ast L_k$ are isomorphic outside an analytic subset of codimension $\geqslant 2$, therefore as soon as $\pi_\ast L_k$ is reflexive, we get immediately  
\[
\pi_\ast L_k\simeq \left[\pi_\ast\left(L\ptensor[k]\right)\right]^\wedge\,.
\]
In the sequel we will prove that $\pi_\ast L_k$ is reflexive in virtue of {\hyperref[prop_critere-ref]{Proposition \ref*{prop_critere-ref}}}. It suffices to check that $L_k$ satisfies the conditions in {\hyperref[prop_critere-ref]{Proposition \ref*{prop_critere-ref}}}: 
let $G$ be an $\pi$-exceptional effective divisor, then there is a minimal $c\in\QQ_{>0}$ such that $cE\geqslant G$. In fact, if we write 
\[
G=\sum_{i=1}^t g_i\Gamma_i\,,
\]
then we can take 
\[
c=\max_{i=1,\cdots,t}\left\{\frac{g_i}{a_i}\right\}\,.
\]
In particular, by the minimality of $c$ there exists an $i$ such that $\Gamma_i\not\subseteq\Supp(cE-G)$, implying that the $\QQ$-divisor $\left.\big(cE-G\big)\right|_{\Gamma_i}$ is $\pi|_{\Gamma_i}$-pseudoeffective. However by the point {\hyperref[env-ref_etape3-1]{\bf 2}} above, the $\QQ$-line bundle
\[
\left.\big(L_k+G\big)\right|_{\Gamma_i}+\left.\big(cE-G\big)\right|_{\Gamma_i}=\left.k\left[L+\left(b+\frac{c}{k}\right)E\right]\right|_{\Gamma_i}
\]
is  non-$\pi|_{\Gamma_i}$-pseudoeffective, hence a fortiori the line bundle $\left.\big(L_k+G\big)\right|_{\Gamma_i}$ is not $\pi|_{\Gamma_i}$-pseudoeffective. Therefore $L_k$ satisfies the conditions in {\hyperref[prop_critere-ref]{Proposition \ref*{prop_critere-ref}}},  thus $L_k$ is reflexive. 
%En conséquence, le diviseur $\pi$-exceptionnel effectif $bE$ satisfait l'isomorphisme \eqref{eq_env-ref} dans le {\hyperref[thm_env-ref]{théorème \ref*{thm_env-ref}}}.

\paragraph{Step 4}
\label{env-ref_etape4}
In this last step, we finish the proof of  {\hyperref[thm_env-ref]{Theorem \ref*{thm_env-ref}}} by constructing the divisor $E$ in the {\hyperref[env-ref_etape3]{\bf Step 3}}. In fact one can show:
\begin{prop}
\label{prop_div-exc-non-rel-psef}
For any $\pi:X\to S$ as in {\hyperref[thm_env-ref]{Theorem \ref*{thm_env-ref}}}, there is an effective $\pi$-exceptional divisor $E$ such that for any $\pi$-exceptional prime divisor $\Gamma$, $E|_\Gamma$ is not $\pi|_\Gamma$-exceptional. 
\end{prop}
The proof is the same as that in \cite[III.5.10.Lemma, pp.~107-108]{Nak04}. For the convenience of the readers, we provide the details in the {\hyperref[appendix_sec_ref-hull]{Appendix \ref*{appendix_sec_ref-hull}}}.

%\end{description}

%\documentclass{Article-fr}

%\begin{document}
\section{Positivity of Relative Pluricanonical Bundles and of their Direct Images}
\label{sec_pos-im-dir}

Let $f:X\to Y$ be a Kähler fibre space between complex manifolds, that is, a proper surjective morphism with connected fibres (an analytic fibre space) such that locally over $Y$ the complex manifold $X$ is Kähler (c.f. the definitions in {\hyperref[sec_intro]{Introduction}}). Let $(L,h_L)$ be a line bundle on $X$ equipped with a singular Hermitian metric $h_L$ whose curvature current $\Theta_{h_L}(L)$ is positive. The main purpose of this section is to establish the positivity result for the $L$-twisted relative pluricanonical bundles and their direct images mentioned in {\hyperref[sec_intro]{Introduction}} (c.f. {\hyperref[thm_Cao_Bergman]{Theorem \ref*{thm_Cao_Bergman}}} and {\hyperref[thm_pos_im-dir]{Theorem \ref*{thm_pos_im-dir}}}). To this end, we will explain the construction of the relative $m$-Bergman kernel metric $h_{X/Y\!,L}^{(m)}$ on $K_{X/Y}\ptensor[m]\otimes L$ and of the canonical $L^2$ metric $g_{X/Y\!,L}$ on the direct image sheaf $f_{\ast}\left(K_{X/Y}\otimes L\otimes\scrJ(h_L)\right)$.
%{\hyperref[thm_pos_im-dir]{Theorem \ref*{thm_pos_im-dir}}}

Let us recall briefly the history of the study of these canonical metrics. Initially, the case with $h_L$ a smooth metric and $f$ smooth is considered in \cite{Ber09}, where the positivity of $f_{\ast}(K_{X/Y}\otimes L)$ is proved by an explicit calculation of the curvature; as a simple consequence, one deduces the positivity of the relative Bergman kernel metric (with $m=1$), c.f. \cite[\S 1, p.~348]{BP08}. In the more general case where $f$ is projective but not necessarily smooth and $f_\ast(K_{X/Y}\otimes L)$ is locally free, 
%(and satisfies the base change property) 
the positivity of $f_{\ast}(K_{X/Y}\otimes L)$ is proved in \cite[Theorem 3.5]{BP08} based on the work of Berndtsson; this result, is in turn used in \cite[Corollary 4.2]{BP08} to prove the positivity of the relative $m$-Bergman kernel metric under the assumption that the direct image sheaf $f_\ast\left(K_{X/Y}\ptensor[m]\otimes L\right)$ is locally free. 
%(based on the case $m=1$ proved in \cite[Theorem 0.1]{BP08}) 
In \cite{PT18}, these positivity results are established for $f$ projective with the locally freeness conditions for direct images removed: it is made clear that the positivity of the relative $m$-Bergman kernel metric can be regarded as a result of the Ohsawa-Takegoshi extension Theorem with the optimal estimate, and thus can be obtained independent of the positivity of direct images; while the proof of the positivity of $f_{\ast}(K_{X/Y}\otimes L)$ is based on \cite{BP08} and is done by a semistable reduction plus an explicit calculation. A little later, it is realized that the positivity of the canonical metric is also a consequence of the Ohsawa-Takegoshi extension theorem with the optimal estimate, as is explained in \cite{HPS18}. Therefore in order to obtain a Kähler version of this theorem, all one needs is to generalize the Ohsawa-Takegoshi extension theorem to the Kähler case. Thanks to \cite{Cao17}, this result is established and the positivity of the relative $m$-Bergman kernel metric is also proved in \cite{Cao17} as a corollary; in consequence, by virtue of the main result in \cite{Cao17} one can follow the same arguments in \cite{HPS18} to demonstrate the positivity of the canonical $L^2$ metric $g_{X/Y\!,L}$ for $f$ Kähler fibre space. Recently we are informed that this result is established in \cite{DWZZ18} by following the strategy of \cite{HPS18} and by a more general positivity theorem for singular Finsler metrics on direct images. 
%For the convenience of the readers, we will sketch the proof of the theorem by following the great lines of \cite{HPS18}.
For the convenience of the readers, we will nevertheless provide some details of the proof in {\hyperref[ss_pos-im-dir_NS]{\S \ref*{ss_pos-im-dir_NS}}}.

\subsection{Ohsawa-Takegoshi Extension Theorems}
\label{ss_pos-im-dir_OT}
As is explained above, the key point of the proof of {\hyperref[thm_pos_im-dir]{Theorem \ref*{thm_pos_im-dir}}}, like many other results in complex geometry, is the Ohsawa-Takegoshi extension theorem. In this subsection we will state theorems of Ohsawa-Takegoshi type for Kähler fibre spaces in the following two forms:

\paragraph{\quad Local Version} 
For a Kähler fibre space whose base is an open ball in some $\CC^d$, we have the following extension theorem of Ohsawa-Takegoshi type with optimal estimation: 

\begin{thm}[higher dimensional version of {\cite[Theorem 1.1 (Corollary 3.1)]{Cao17}}]
\label{thm_Cao_OT}
Let $p:X\to B$ be a analytic (Kähler) fibre space with $X$ a Kähler manifold and $B\subseteq \CC^d$ the open ball of centre $0$ and of radius $R$. Let $(L,h_L)$ be a holomorphic line bundle on $X$ equipped with $h_L$ a singular Hermitian metric such that the curvature current of $h_L$ is positive. Suppose that $X_0:=p\inv(0)$ is a smooth fibre of $p$, and that $h_L|_{X_0}$ is not identically $+\infty$. Then for any holomorphic section $f\in\Coh^0(X_0,K_{X_0}\otimes L|_{X_0}\otimes\scrJ(h_L|_{X_0}))$, there exists a section $F\in\Coh^0(X,K_X\otimes L)$ such that $F|_{X_0}=f$ and
\[
\frac{1}{\mu(B)}\int_X |F|^2e^{-\phi_L}\leqslant \int_{X_0}|f|^2 e^{-\phi_L},
\]
where $\mu(B)$ denotes the Lebesgue measure of $B$.
\end{thm}
\begin{proof}
We obtain the theorem by applying \cite[Theorem 1.3]{Cao17} to the fibre space $X\xrightarrow{p}B$ with $E=p^\ast\scrO_B^{\oplus d}$\,, $v=p^\ast t$ where $t=(t_1,\cdots, t_d)$ and $t_i$'s are standard coordinates of $\CC^d$, $A=2d\log R$, $c_A(t)\equiv1$, and by letting $\delta\to+\infty$ (c.f. also \cite[\S 4.2, Lemma 4.14]{GZ15}). In particular, when $d=1$ one recovers \cite[Theorem 1.1 (Corollary 3.1)]{Cao17}.
\end{proof}

\paragraph{\quad Global Version}
In many cases, one needs a global version of Ohsawa-Takegoshi extension theorem for Kähler fibre spaces over projective bases; in this case, one cannot obtain an optimal estimation, but one still has an surjection of section spaces up to a twisting by a ample line bundle from the base, along with a weaker estimation on the $L^2$ norm. In fact we have the following: 
\begin{thm}[Kähler version of {\cite[Corollary 2.10]{Den17}}]
\label{thm_Deng_OT}
Let $Y$ be a smooth projective variety of dimension $d$ and let $f:X\to Y$ be a surjective morphism between compact Kähler manifolds with connected fibres. Let $(A_Y,y)$ be any pair with $A_Y$ ample line bundle on $Y$ and $y\in Y_0$ (where $Y_0$ denotes the smooth locus of $f$), such that the Seshadri constant 
\[
\epsilon(A_Y,y)>\dim Y=d. 
\]
Let $(L,h_L)$ be any holomorphic line bundle on $X$ equipped with a singular Hermitian metric $h_L$ whose curvature current is positive, such that $h_L|_{X_y}\not\equiv+\infty$. Then for any section $u\in\Coh^0(X_y,K_{X_y}\otimes L|_{X_y}\otimes\scrJ(h_L|_{X_y}))$, there is a section $\sigma\in\Coh^0(X,K_X\otimes L\otimes f^\ast A_Y)$ such that $\sigma|_{X_y}=u$ with an $L^2$ estimate independent of $L$.   
\end{thm}
For the proof, refer to \cite[Corollary 2.10]{Den17}. Just remark that: in \cite{Den17} this theorem is only stated for $f$ a projective morphism. The above Kähler version holds because the proof of \cite[Corollary 2.10]{Den17} depends only on \cite[(2.8)Theorem]{Dem15} (c.f. also \cite[Theorem 2.9]{Den17}), which is valid for any pseudo-convex Kähler manifold.

\subsection{Positivity of the Relative \texorpdfstring{$m$}{text}-Bergman Kernel Metric}
\label{ss_pos-im-dir_Bergman}
Let $f:X\to Y$ be an analytic fibre space between complex manifolds and let $(L,h_L)$ be a holomorphic line bundle on $X$ equipped with a singular Hermitian metric $h_L$ with curvature current  $\Theta_{h_L}(L)\geqslant 0$. Set $n=\dim X$, $d=\dim Y$ and $e=\dim X-\dim Y=n-d$. Let us recall the construction of the relative $m$-Bergman kernel metric on $K_{X/Y}\ptensor[m]\otimes L$. We will follow \cite[\S 2.1]{CP17} and \cite[\S 3.2]{Cao17}; for more details, c.f. \cite[\S 1]{BP08}. 
 
As in the statement of {\hyperref[thm_pos_im-dir]{Theorem \ref*{thm_pos_im-dir}}}, let $Y_0$ be the (analytic) Zariski open subset of $Y$ over which $f$ is smooth. Let $x\in f\inv Y_0$ and let $z_1,\cdots,z_{d+e}$ the local coordinates near $x$; write $y=f(x)\in Y_0$ and let $t_1,\cdots,t_d$ be local coordinates near $y$ such that $z_{j+e}=f^\ast t_j$. Suppose in addition that over the coordinate neighbourhood of $x$ (resp. of $y$) chosen as above the line bundle $L$ as well as the canonical bundles of $X$ are trivial (resp. the canonical bundle of $Y$ is trivial). 

%In this trivialization la métrique $h_L$ s'écrit $|\cdot|^2e^{-\phi_L}$ avec le potentiel local $\phi_L$ une fonction psh.  
 
Suppose that $f_\ast\left(K_{X/Y}\ptensor[m]\otimes L\right)\neq 0$. We define a $L^{2/m}$-Finsler norm on $\Coh^0(X_y,K_{X_y}\ptensor[m]\otimes L|_{X_y})$ by taking the integral over the fibre 
\begin{equation}
\label{eq_def-norm-fibre}
\|u\|_{m,y,L}^{\frac{2}{m}}:=\int_{X_y}|u|^{\frac{2}{m}}e^{-\frac{1}{m}\phi_L}
\end{equation}
(we authorize this to be $+\infty$, which is the case when $h_L|_{X_y}\equiv+\infty$). In addition, we denote by $F_u$ the coefficient of $(dz_1\wedge\cdots\wedge dz_{d+e})\ptensor[m]$ in the local expression of $u\wedge f^\ast(dt_1\wedge\cdots\wedge t_d)\ptensor[m]$. Then local weight $\phi^{(m)}_{X/Y\!,L}$ of the relative $m$-Bergman kernel metric $h^{(m)}_{X/Y\!,L}$ is given by
\begin{equation}
\label{eq_def-Bergman}
e^{\phi_{X/Y\!,L}^{(m)}(x)}=\sup_{\|u\|_{m,y,L}\leqslant 1}|F_u(x)|^2\,,
\end{equation}
where $\phi_L$ denotes the local weight of the metric $h_L$. Let us remark that if $h_L|_{X_y}\equiv+\infty$ \eqref{eq_def-Bergman} is equal to $0$ by convention and thus $\phi_{X/Y\!,L}(x)=-\infty$. 
%Equivalently, by choosing an orthonormal basis $u_1,\cdots,u_l$ of $\Coh^0(X_y,K_{X_y}\otimes L|_{X_y}\otimes\scrJ(h_L|_{X_y}))$ (where $l:={\dimcoh^0(X_y,K_{X_y}\otimes L|_{X_y}\otimes\scrJ(h_L|_{X_y}))}$) with respect to the norm $\|\cdot\|_{m,y,L}$ we have
%\[
%e^{\phi^{(m)}_{X/Y\!,L}(x)}=\sum_{j=1}^l |F_{u_j}(x)|^2\,.
%\]
The metric $h^{(m)}_{X/Y\!,L}=e^{-\phi^{(m)}_{X/Y\!,L}}$ can also be described in an intrinsic way as follows: for $\xi\in (K_{X/Y}\ptensor[(-m)]\otimes L\inv)_x$, we have
\[
|\xi|_{h^{(m)\ast}_{X/Y\!,L},x}=\sup_{\|u\|_{m,y,L}\leqslant 1}|\xi(u(x))|\,.
\]
 
%\begin{prop}
%\label{prop_cont-hL-Bergman}
%Si la métrique $h_L$ est continue sur $f\inv Y_0$, alors la métrique $h_{X/Y\!,L}^{(m)}$ est aussi continue sur $f\inv Y_0$.
%\end{prop}
 
Suppose in the sequel of this subsection that $f$ is a Kähler fibre space with $X$ and $Y$ complex manifolds. By using the Ohsawa-Takegoshi extension theorem with optimal estimate (c.f. {\hyperref[thm_Cao_OT]{Theorem \ref*{thm_Cao_OT}}}) Junyan Cao proved in \cite{Cao17} that the relative $m$-Bergman kernel metric constructed above is semipositively curved (the Kähler hypothesis on $X$ and $Y$ in the original statement of \cite[Theorem 3.5]{Cao17} is in fact not necessary, and can be replaced by the hypothesis that $f$ is a Kähler fibre space, in which case $X$ is only assumed to be Kähler locally over $Y$):
\begin{thm}[{\cite[Theorem 1.2 (Theorem 3.5)]{Cao17}}]
\label{thm_Cao_Bergman}
Let $f: X\to Y$ be a Kähler fibre space with $X$ and $Y$ complex manifolds and $(L,h_L)$ be a holomorphic line bundle on $X$ equipped with a singular Hermitian metric $h_L$ whose curvature current is positive. Let $m$ be a positive integer. Suppose that for a general point $y_0\in Y$ there exists a non-zero section $u\in\Coh^0(X_{y_0},K_{X_{y_0}}\ptensor[m]\otimes L|_{X_{y_0}})$ satisfying
\[
\int_{X_{y_0}}|u|^{\frac{2}{m}}e^{-\frac{1}{m}\phi_L}<+\infty\,,
\]
then the curvature current of the relative $m$-Bergman kernel metric $h_{X/Y\!,L}^{(m)}$ is positive. More precisely, there is an (analytic) Zariski open subset of $f\inv Y_0$ (c.f. {\hyperref[rmq_thm_Cao_Bergman]{Remark \ref*{rmq_thm_Cao_Bergman}}} below) such that the local weight $\phi_{X/Y\!,L}^{(m)}$ of the metric $h_{X/Y\!,L}^{(m)}$ defined above is a psh function uniformly bounded from above, thus it admits a unique (psh) extension on $X$.
\end{thm}

\begin{rmq}
\label{rmq_thm_Cao_Bergman}
Define for every (quasi-)psh function $\phi$ and for every integer $m>0$ the ideal sheaf $\scrJ_m(\phi)$ by taking
\[
\scrJ_m(\phi)_x:=\left\{\,f\in\scrO_{X,x}\,\bigg|\,|f|^{\frac{2}{m}}e^{-\frac{1}{m}\phi}\in L_{\text{loc}}^1\,\right\},
\]
which is proved to be coherent in \cite{Cao17}. Then the integrability condition in {\hyperref[thm_Cao_Bergman]{Theorem \ref*{thm_Cao_Bergman}}} is equivalent to the non-vanishing condition that $f_\ast(K_{X/Y}\ptensor[m]\otimes L\otimes\scrJ_m(h_L))\neq 0$. Moreover, the open subset mentioned in {\hyperref[thm_Cao_Bergman]{Theorem \ref*{thm_Cao_Bergman}}} can be taken to be $f\inv U$ where $U\subseteq Y_0$ is the (analytic) Zariski open subset consist of all point $t\in U$ such that
\[
\dimcoh^0(X_t,(K_{X/Y}\ptensor[m]\otimes L\otimes\scrJ_m(h_L)|_{X_t})=\rank f_\ast(K_{X/Y}\ptensor[m]\otimes L\otimes\scrJ_m(h_L)). 
\]
\end{rmq}

By an explicit local calculation as in \cite[Theorem 2.3]{CP17} or \cite[3.33.Theorem]{Pau16} we obtain (in virtue of {\hyperref[thm_Cao_Bergman]{Theorem \ref*{thm_Cao_Bergman}}} the proof in \cite{CP17} apparently does not require the projectivity of $f$):

\begin{prop}[Kähler version of {\cite[Remark 2.5]{CP17}} or {\cite[3.35.Remark]{Pau16}}]
\label{prop_Bergman-discriminant}
Let $f: X\to Y$ be a Kähler fibre space with $X$ and $Y$ complex manifolds and $(L,h_L)$ be a holomorphic line bundle on $X$ equipped with a singular Hermitian metric $h_L$ whose curvature current is positive. Let $m$ be a positive integer. Suppose that for a general point $y_0\in Y$ there exists a non-zero section $u\in\Coh^0(X_{y_0},K_{X_{y_0}}\ptensor[m]\otimes L|_{X_{y_0}})$ satisfying
\[
\int_{X_{y_0}}|u|^{\frac{2}{m}}e^{-\frac{1}{m}\phi_L}<+\infty\,,
\]
(as in the hypothesis of {\hyperref[thm_Cao_Bergman]{Theorem \ref*{thm_Cao_Bergman}}}). Then we have
\begin{equation}
\label{eq_Bergman>disc+L}
\Theta_{h_{X/Y\!,L}^{(m)}}(K_{X/Y}\ptensor[m]\otimes L)\geqslant m[\Sigma_f]
\end{equation}
in the sense of currents, where the divisor is defined as follows: let $\Sigma_Y$ be a divisor containing $Y\backslash Y_0$ and write 
\[
f^\ast\Sigma_Y=\sum_{i\in I}b_i W_i
\]
with the $b_i$'s positive integers and $W_i$'s prime divisors over $X$, then we define
\[
\Sigma_f:=\sum_{i\in I^{\divisor}}(b_i-1)W_i
\]
with $I^{\divisor}$ the set of indices $i\in I$ such that $f(W_i)$ is a divisor over $Y$. This definition is clearly does not depends on the choice of $\Sigma_Y$. In particular, the current $\Theta_{h_{X/Y\!,L}^{(m)}}(K_{X/Y}\ptensor[m]\otimes L)$ is singular along the multiple fibres of $f$ in codimension $1$.
\end{prop}
\begin{proof}
Let us remark that in \cite{CP17} the proof of inequality \eqref{eq_Bergman>disc+L} is only sketched for $m=1$. For the convenience of the readers let us give a detailed proof for the general case here. Since a positive $(1,1)$-current extends across analytic subsets of codimension $2$, it suffices to check the inequality around a general point of $W_i$ for every $i\in I_{\divisor}$ (so that one can assume that every $W_i$ is smooth). Say $i=1\in I_{\divisor}$, and let $x$ be a general point of $W_1$. Take a small ball $B_y$ (of radius $<1$) around $y=f(x)$ with holomorphic local coordinates $(t_j)_{j=1,\cdots,d}$ and a small ball $\Omega_x\subset f\inv(B_y)$ around $x_0$ with holomorphic local coordinates $(z_i)_{i=1,\cdots,n}$, such that $W_1$ is locally defined by the equation $z_{e+1}=0$ and that $f(W_1)$ is defined by $t_1=0$. Then $f$ is locally given by the formula (up to reordering the indices):
\[
(z_1\,,\cdots,z_e\,,z_{e+1}\,,\cdots,z_n)\longmapsto(z_{e+1}^{b_1}\,,z_{e+2}\,,\cdots,z_n).
\] 

Now let $y_0\in B_y\backslash(t_1=0)$, and let $u\in\Coh^0(X_{y_0}, K_{X_{y_0}}\ptensor[m]\otimes L|_{X_{y_0}})$ satisfying the $L^{2/m}$ condition as in the hypothesis; up to a normalization one can suppose that $\|u\|_{m,y_0,L}=1$. Then by the construction of $F_u$ we have
\[
1=\|u\|^{\frac{2}{m}}_{m,y_0,L}=\int_{X_{y_0}}|u|^{\frac{2}{m}}e^{-\frac{1}{m}\phi_L}\geqslant\int_{\Omega_x\cap X_{y_0}}\left|\frac{F_u}{z_{e+1}^{m(b_1-1)}}\right|^{\frac{2}{m}}d\mu_{X_{y_0}}
%=\int_{B_x\cap(z_{e+i}=t_i(y_0))}\left|\frac{F_u}{z_{e+1}^{m(b_1-1)}}\right|^{\frac{2}{m}}d\mu_{X_{y_0}}
\]
where $d\mu_{X_{y_0}}$ is the Lebesgue measure on $X_{y_0}$ with respect to the $z_i$'s. Notice that
\[
\Omega_x\cap X_{y_0}=\left\{z_{e+1}^{b_1}=t_1(y_0)\,, z_{e+i}=t_i(y_0)\,, 2\leqslant i\leqslant d\right\},
\]
hence by applying the Ohsawa-Takegoshi type extension theorem \cite[Theorem 4.2]{DWZZ18} (or \cite[0.2.Proposition]{BP10}) to $\Omega=\Omega_x$\,, $p=(z_{e+1}\,,\cdots,z_n)$ and $\phi=(b_1-1)\log|z_{e+1}|^2$, the holomorphic function $F_u$ extends to a function $G_u$ defined on $\Omega_x$ satisfying the following $L^{2/m}$-integrability condition: 
\[
\int_{\Omega_x}\left|\frac{G_u}{z_{e+1}^{m(b_1-1)}}\right|^{\frac{2}{m}}d\mu_X\leqslant \mu(B_y)
\]
By valuative integrability criterion \cite[Theorem 10.11]{Bou16} the generic Lelong number of $\log|G_u|$ over $W_1$ is strictly superior to $m(b_1-1)$, implying that
\[
\log|G_u|^2\leqslant m(b_1-1)\log|z_{e+1}|^2+C_{y_0}
\]
for some uniform (the section space $\Coh^0(X_{y_0}\,,K_{X_{y_0}}\ptensor[m]\otimes L|_{X_{y_0}})$ being finite-dimensional) constant $C_{y_0}$ depending on $y_0$. Hence by the construction \eqref{eq_def-Bergman} we have
\[
\phi_{X/Y\!,L}^{(m)}(z)\leqslant m(b_1-1)\log|z_{e+1}|^2+C_{f(z)}\,;
\]
by the mean-value inequality the constant $C_{f(z)}$ can be chosen locally uniform, which proves \eqref{eq_Bergman>disc+L}. 
\end{proof}

\subsection{Positivity of the Canonical \texorpdfstring{$L^2$}{text} Metric on the Direct Image Sheaf}
\label{ss_pos-im-dir_NS}

In this subsection, let $f:X\to Y$ be an analytic fibre space between complex manifolds and let $(L,h_L)$ be a holomorphic line bundle on $X$ equipped with a singular Hermitian metric $h_L$ with curvature current  $\Theta_{h_L}(L)\geqslant 0$. We will show in the sequel that the canonical $L^2$ metric on the direct image sheaf $f_\ast(K_{X/Y}\otimes L\otimes\scrJ(h_L))$ is semipositively curved, that is, to prove the following theorem: 

\begin{thm}[Kähler version of {\cite[Theorem 1(b)]{PT18}}]
\label{thm_pos_im-dir}
let $f:X\to Y$ be an analytic fibre space between complex manifolds and let $(L,h_L)$ be a holomorphic line bundle on $X$ equipped with a singular Hermitian metric $h_L$ with curvature current  $\Theta_{h_L}(L)\geqslant 0$. Then the torsion free sheaf $f_\ast\left(K_{X/Y}\otimes L\otimes\scrJ(h_L)\right)$ admits a canonical semipositively curved singular Hermitian metric $g_{X/Y\!,L}$ which satisfies the $L^2$ extension property.
\end{thm}

The argument is very close to that in \cite[\S 22-24]{HPS18}. For the convenience of the readers, we will nevertheless explain it in details. 
 
First recall the construction of the canonical $L^2$ metric on the direct image of the adjoint line bundle (twisted by the multiplier ideal). Briefly speaking, it is done as following: when $Y=\text{pt}$, then $X$ is compact, and this is nothing other than the natural $L^2$ norm on $\Coh^0(X,K_X\otimes L\otimes\scrJ(h_L))$; for the general case, we just do this construction in family. 

Precisely, $g_{X/Y\!,L}$ is constructed as following: let $y\in Y_0$, take a coordinate neighbourhood $B$ of $y$, so that $K_Y$ is trivial over $B$, then there is a nowhere vanishing holomorphic $d$-form $\eta$ such that $K_B\simeq \scrO_B\cdot\eta$. For any section $u\in\Coh^0(B,f_\ast(K_{X/Y}\otimes L)\otimes\scrJ(h_L))$, one can regard it as a morphism of $\scrO_B$-modules (in virtue of the projection formula)
\[
u:K_B \longrightarrow \left. f_\ast(K_X\otimes L\otimes\scrJ(h_L))\right|_B
\]
Thus we obtain a section $u(\eta)\in\Coh^0(B,f_\ast(K_X\otimes L\otimes\scrJ(h_L)))=\Coh^0(f\inv B, K_X\otimes L\otimes\scrJ(h_L))$. Locally over $f\inv(B\cap Y_0)$ we can write $u(\eta)=\sigma_u\wedge f^\ast\eta$; whilst the choice of $\sigma_u$ depends on $\eta$, its restriction to the fibre $\sigma_u|_{X_y}$ does not. The local sections $\sigma_u|_{X_y}$'s glue together to give rise to a section $\sigma_{u,y}\in\Coh^0(X_y,K_{X_y}\otimes (L\otimes\scrJ(h_L))|_{X_y})$. Then we define the canonical $L^2$ metric as following: for two local sections $u,v$ of $f_\ast(K_{X/Y}\otimes L)$ (resp. of $f_\ast(K_{X/Y}\otimes L\otimes\scrJ(h_L))$), define
\begin{equation}
\label{eq_def-metrique-NS}
g_{X/Y\!,L}(u,v)(y)=\left(\sqrt{-1}\right)^{n^2}\int_{X_y}\sigma_{u,y}\wedge\bar\sigma_{v,y}e^{-\phi_L}\,.
\end{equation}

Before proving the result, let us fix some notations for later use:
\paragraph{Notations:}
\label{notation_Y1}
Set $Y_1$ the (analytic) Zariski open subset of $Y_0$ such that 
\begin{itemize}
\item[(i)]\label{hyp_Y1_1} $f_\ast(K_{X/Y}\otimes L\otimes\scrJ(h_L))$ and the quotient sheaf of $f_\ast(K_{X/Y}\otimes L)$ by $f_\ast(K_{X/Y}\otimes L\otimes\scrJ(h_L))$ are both locally free over $Y_1$\,;
%\item $f_\ast(K_{X/Y}\otimes L\otimes\scrJ(h_L))$ soit saturé dans $f_\ast(K_{X/Y}\otimes L)$ au-dessus de $Y_1$\,; 
\item[(i\!i)]\label{hyp_Y1_2} $f_\ast(K_{X/Y}\otimes L)$ satisfies the base change property over $Y_1$\,, i.e. $f_\ast(K_{X/Y}\otimes L)\otimes\kappa(y)\simeq \Coh^0(X_y,K_{X_y}\otimes L|_{X_y})$ for every $y\in Y_1$. (e.g. if the function $y\mapsto\dimcoh^0(X_y,K_{X_y}\otimes L|_{X_y})$ is locally constant on $Y_1$, c.f. \cite[Theorem 1.4(3), p.~6]{Uen75}).
%(e.g. si les faisceaux cohérents $\textsf{R}^i\!f_\ast(K_{X/Y}\otimes L)$ sont tous localement libres, c.f. \cite[Corollary 12.9, pp.~288-289]{Har77}).
\end{itemize}
Set in addition $\scrG_L:=f_\ast(K_{X/Y}\otimes L\otimes\scrJ(h_L))$. With these notations we have the following auxiliary lemma:
\begin{lemme}
\label{lemme_inclusion-sur-Y1}
We have inclusions
\[
\Coh^0(X_y,K_{X_y}\otimes L|_{X_y}\otimes\scrJ(h_L|_{X_y}))\subseteq\scrG_L\otimes\kappa(y)\subseteq f_\ast(K_{X/Y}\otimes L)\otimes\kappa(y)=\Coh^0(X_y,K_{X_y}\otimes L|_{X_y})
\]
for every $y\in Y_1$.
\end{lemme}
 \begin{proof}
The second inclusion is simply a consequence of the condition {\hyperref[hyp_Y1_1]{(i)}} in the definition of $Y_1$, whereas the first one is deduced from the Ohsawa-Takegoshi type extension {\hyperref[thm_Cao_OT]{Theorem \ref*{thm_Cao_OT}}}. Let us explain precisely the proof of the first inclusion: let $y\in Y_1$, if $h_L|_{X_y}\equiv+\infty$, then $\Coh^0(X_y,K_{X_y}\otimes L|_{X_y}\otimes\scrJ(h_L|_{X_y}))=0$, and the inclusion is automatically established; thus we can assume $h_L|_{X_y}\not\equiv+\infty$. Take a section $u\in\Coh^0(X_y,K_{X_y}\otimes L|_{X_y}\otimes\scrJ(h_L|_{X_y}))$, then for a small ball $B\subseteq Y$ of centre $y$, by {\hyperref[thm_Cao_OT]{Theorem \ref*{thm_Cao_OT}}} (note that $Y_1\subseteq Y_0$, which implies that $X_y$ is smooth) there exists a section $U\in\Coh^0(f\inv B, K_X\otimes L\otimes\scrJ(h_L))$ such that $U|_{X_y}=u$. $K_B$ being a trivial line bundle, we can write $U=\sigma_U\wedge f^\ast\eta$ where $\eta=t_1\wedge\cdots\wedge t_d$ is a nowhere vanishing holomorphic section of $K_B$ (i.e. a holomorphic $d$-form) and $\sigma_U\in\Coh^0(f\inv B, K_{X/Y}\otimes L\otimes\scrJ(h_L))=\Coh^0(B,\scrG_L)$, hence the result is proved.
\end{proof}
 
For any $y\in Y_1$, since $f_\ast(K_{X/Y}\otimes L)$ satisfies the base change property, the expression of the metric $g_{X/Y\!,L}$ is simpler: for $u\in\scrG_L\otimes\kappa(y)$, $u$ can be regarded as a section in $\Coh^0(X_y,K_{X_y}\otimes (L\otimes\scrJ(h_L))|_{X_y})\subseteq\Coh^0(X_y,K_{X_y}\otimes L|_{X_y})$, and we have
\begin{equation}
\label{eq_formula-Y1-metrique-NS}
|u|^2_{g_{X/Y\!,L},y}=\int_{X_y}|u|^2e^{-\phi_L}.
\end{equation}
In particular, $|u|^2_{g_{X/Y\!,L},y}$ ($y\in Y_1$) is finite if and only if $u\in\Coh^0(X_y,K_{X_y}\otimes L|_{X_y}\otimes\scrJ(h_L|_{X_y}))$. Now let us prove the following result which assures that $g_{X/Y\!,L}$ is well-behaved:

\begin{prop}
\label{prop_metrique_im-dir}
The metric $g_{X/Y\!,L}$ defined above on $f_\ast(K_{X/Y}\otimes L\otimes\scrJ(h_L))$ is measurable, and is non-degenerate and bounded almost everywhere.
\end{prop} 
\begin{proof}
We check successively:
\begin{itemize}
\item[(a)]\label{prop_metrique_im-dir_demo-a} $g_{X/Y\!,L}$ is measurable: this is morally automatic, but we give the details in order to fix some notations for later use. Let $s\in\Coh^0(B,\scrG_L)$ be a local section on $B$ with $B$ a small ball in $Y$, we will show that $\Lambda_s:=|s|_{g_{X/Y\!,L}}^2$ is a measurable function. To this end, we can assume that $B$ is contained in $Y_0$; in addition, $s$ can be regarded as a section in $\Coh^0(f\inv B,K_{X/Y}\otimes L)$, and thus $s(y)\in\Coh^0(X_y,K_{X_y}\otimes L|_{X_y})$; $s\wedge f^\ast\eta\in\Coh^0(f\inv B,K_X\otimes L)$ where $\eta$ is a nowhere vanishing holomorphic $d$-form, giving rise to a trivialization $K_B\simeq\scrO_B\cdot \eta$. By definition, for any $y\in B\cap Y_1$ we have 
\[
\Lambda_s(y)=\int_{X_y}|s(y)|^2e^{-\phi_L},
\]
By Ehresmann's theorem (c.f. for example \cite[\S 9.1.1, Proposition 9.3, pp.~209-210]{Voi02}) we have a diffeomorphism $X_0\times B\xrightarrow{\tau}f\inv B$ such that $\tau|_{X_0\times\{0\}}\circ i_0=\id_{X_0}$ where $i_y:X_0\to X_0\times B$ is the natural inclusion which identifies $X_0$ à $X_0\times\{y\}$ in $X_0\times B$. Then we can write
\begin{equation}
\label{eq_metric-g-section}
\Lambda_s(y)=\int_{X_0}G_s(y,\,\cdot\,)\text{Vol}_{X_0} 
\end{equation}
where $\text{Vol}_{X_0}$ is a fixed volume form on $X_0$ and $G_s$ is a function such that 
\begin{equation}
\label{eq_def-fonction-G}
G_s(y,\,\cdot\,)\text{Vol}_{X_0}=\left|\tau^\ast(s\wedge f^\ast\eta)\big|_{X_0\times\{y\}}\right|^2e^{-\phi_L}.
\end{equation}
$\phi_L$ being a psh function, the function $G_s$ is lower semi-continuous and is well defined on $X_0\times(B\cap Y_1)$, in particular it is measurable. Hence by Fubini's theorem, $\Lambda_s$ is measurable.

\item[(b)]\label{prop_metrique_im-dir_demo-b} $g_{X/Y\!,L}$ is non-degenerate and bounded almost everywhere (c.f. also \cite[3.29.Remark]{Pau16}): First one notices that by the formula \eqref{eq_formula-Y1-metrique-NS} the metric $g_{X/Y\!,L}$ is non-degenerate over $Y_1$ since $\phi_L$ is a psh function. In order to show that $g_{X/Y\!,L}$ is bounded almost everywhere, it suffices to prove:
\begin{lemme}
\label{lemme_Fubini_multiplier-ideal}
The natural inclusion
\[
\Coh^0(X_y,K_{X_y}\otimes L|_{X_y}\otimes\scrJ(h_L|_{X_y})\hookrightarrow\scrG_L\otimes\kappa(y)
\]
is an isomorphism for $y\in Y_1$ almost everywhere. 
\end{lemme}
\begin{proof}[Proof of {\hyperref[lemme_Fubini_multiplier-ideal]{Lemma \ref*{lemme_Fubini_multiplier-ideal}}}]
Take a small neighbourhood $B$ of $y$ in $Y$ (so that there is a trivialization $K_B\simeq\scrO_B\cdot\eta$ with $\eta$ a nowhere vanishing holomorphic $d$-form on $B$). $\scrG_L$ begin coherent, there is a finite family of sections, namely $\{\,s_1,\cdots,s_k\,\}$, which generate $\scrG_L$ over $B$; the $s_i$'s can be regarded as sections in $\Coh^0(f\inv B,K_{X/Y}\otimes L)$, satisfying the following $L^2$ integrability condition: for every coordinate open subset $U$ in $f\inv B$ we have
\[
\int_{U}\left|s_i\wedge f^\ast\eta\right|^2e^{-\phi_L}<+\infty.
\]
In particular, $\left|s_i\wedge f^\ast\eta\right|^2e^{-\phi_L}\in L^1_{\text{loc}}(f\inv B)$ (with an abuse of notations), hence Fubini's theorem implies that $\left|s_i|_{X_y}\right|^2e^{-\phi_L}\in L^1_{\text{loc}}(X_y)$ for $y\in B$ almost everywhere, which proves the result.
\end{proof}
\end{itemize}
\end{proof}
 
By virtue of {\hyperref[prop_metrique_im-dir]{Proposition \ref*{prop_metrique_im-dir}}}, in order to prove that $g_{X/Y\!,L}$ defined above extends to a semipositively curved singular Hermitian metric on $\scrG_L$\,, it remains to show: for $U\subseteq Y$ an open subset, and for $\alpha\in\Coh^0(U,\scrG_L^\ast)$ a non-zero section, $\psi_\alpha:=\log|\alpha|^2_{g_{X/Y\!,L}^\ast}$ (a function well-defined on $U\cap Y_0$) extends to a psh function on $U$. To this end, we will successively establish (by {\hyperref[prop_metrique_im-dir]{Proposition \ref*{prop_metrique_im-dir}}}, $\psi_\alpha\not\equiv-\infty$ on $U\cap Y_0$):
\begin{itemize}
\item[(A)]\label{pos_im-dir_A} $\psi_\alpha$ is locally uniformly bounded from above on $U_1:=U\cap Y_1$; \item[(B)]\label{pos_im-dir_B} $\psi_\alpha$ is upper semi-continuous on $U_1$\,;
\item[(C)]\label{pos_im-dir_C} $\psi_\alpha$ satisfies the mean value inequality on any disc in $U_1$. 
\end{itemize}
In fact, the points {\hyperref[pos_im-dir_B]{(B)}} and {\hyperref[pos_im-dir_C]{(C)}} imply that $\psi_\alpha$ is a psh function over $U_1$; and the point {\hyperref[pos_im-dir_A]{(A)}} implies moreover that $\psi_\alpha|_{U_1}$ admits a unique psh extension to $U$. In addition, let us remark that up to replacing $Y$ par $U$, one can suppose that $\alpha$ is a global section; in this case $\psi_\alpha$ is a function well defined over $Y_0$. The proof of theses three points relies on the Ohsawa-Takegoshi type extension {\hyperref[thm_Cao_OT]{Theorem \ref*{thm_Cao_OT}}}, which permits us to extend a section on the fibre to a neighbourhood along with an $L^2$ estimate (in some cases we should require this estimate to be optimal). 
 
\paragraph{Demonstration of {\hyperref[pos_im-dir_A]{(A)}}:}
Let $y_0\in Y$, we will prove that $y_0$ admits a neighbourhood on whose intersection with $Y_1$ the function $\psi_\alpha$ is uniformly upper bounded. To this end, take a small open ball $B_0$ of centre $y_0$ in $Y$ and denote $B_1:=\frac{1}{2}B_0$, $B=B_2:=\frac{1}{4}B_0$ and $R_0=$ radius of $B_0$. We will prove in the sequel that $\psi_\alpha$ est uniformly upper bounded on $B\cap Y_1$. This proceeds in two steps:
\begin{itemize}
\item[(A1)]\label{pos_im-dir_demoA1} Firstly we prove that
\begin{equation}
\label{eq_demo_a1}
\psi_\alpha|_{B\cap Y_1}\leqslant\text{ punctual supremum of the family of functions }\left\{\log|\alpha(s)|^2\right\}_{s\in S_{M_0}}
\end{equation}
where $S_{M_0}$ denotes the set of sections $s\in\Coh^0(B_1,\scrG_L)=\Coh^0(f\inv B_1, K_{X/Y}\otimes L\otimes\scrJ(h_L))$ satisfying the following $L^2$ condition:
\begin{equation}
\label{eq_cond_s-psi}
\int_{f\inv B_1}\left|s\wedge f^\ast\eta|_{B_1}\right|^2e^{-\phi_L}\leqslant \left(\frac{3}{4}\right)^d\!\mu(B_0):=M_0,
\end{equation}
where $\mu(B_0)$ denotes the Lebesgue measure of $B_0$ and $\eta$ a nowhere vanishing holomorphic $n$-form on $B_0$ (which gives rise to a trivialization $K_{B_0}\simeq\scrO_{B_0}\cdot\eta$). 

For every $y\in B\cap Y_1$ such that $h_L|_{X_y}\not\equiv+\infty$ (if $h_L|_{X_y}\equiv+\infty$, then $\psi_\alpha(y)=-\infty$ and \eqref{eq_demo_a1} is automatically established at $y$), we have 
\[
\psi_\alpha(y)=\log\left|\alpha(y)\right|_{g_{X/Y\!,L}^\ast,y}^2=\sup_{\|u\|_{y,L}\leqslant 1}\log\left|\alpha(y)(u)\right|^2\,.
\]
The set $\left\{\,u\in\Coh^0(X_y,K_{X_y}\otimes L|_{X_y})\,\big|\,\|u\|_{y,L}\leqslant 1\right\}$ being compact, the supremum is attained by a vector $v_y\in\scrG_L\otimes\kappa(y)$ satisfying $\|v_y\|_{y,L}=|v_y|_{g_{X/Y\!,L},y}=1$ (we denote $\|\cdot\|_{1,y,L}=\|\cdot\|_{y,L}$, compare \eqref{eq_def-norm-fibre} and \eqref{eq_formula-Y1-metrique-NS}); in particular $v_y\in\Coh^0(X_y,K_{X_y}\otimes L|_{X_y}\otimes\scrJ(h_L|_{X_y}))$. Consider the open ball $B_y:=B(y,\frac{3}{4}R_0)$ of centre $y$ and of radius $=\frac{3}{4}R_0$. Then $B\subseteq B_1\subseteq B_y\subseteq B_0$. By {\hyperref[thm_Cao_OT]{Theorem \ref*{thm_Cao_OT}}} we get a section $s_y\in\Coh^0(B_y,\scrG_L)$ such that $s_y|_{X_y}=v_y$ and satisfies the following $L^2$ condition:
\[
\int_{f\inv B_y}\left|s_y\wedge f^\ast\eta|_{B_y}\right|^2e^{-\phi_L}\leqslant \mu(B_y)\cdot\|v_y\|_{y,L}=\mu(B_y)=\left(\frac{3}{4}\right)^d\!\mu(B_0)=M_0\,.
\]
In particular, $s_y|_{B_1}$ satisfies the condition \eqref{eq_cond_s-psi}, then $s_y|_{B_1}\in S_{M_0}$. In addition, we have \[\psi_\alpha(y)=\left(\log\left|\alpha(s_y)\right|^2\right)(y),\]
which proves \eqref{eq_demo_a1}.
 
\item[(A2)]\label{pos_im-dir_demoA2} By the previous step, it remains to prove that the functions $\log\left|\alpha(s)\right|^2$ ($s\in S_{M_0}$) are all uniformly upper bounded over $\bar B$ by a uniform constant. In fact we can prove the following more general: 
\begin{lemme}
\label{lemme_hol-L2-compact}
For a fixed $M\geqslant 0$, define
\[
S_M:=\left\{\,s\in\Coh^0(B_1,\scrG_L)=\Coh^0(f\inv B_1,K_{X/Y}\otimes L\otimes\scrJ(h_L))\,\bigg|\,\int_{f\inv B_1}\left|s\wedge f^\ast\eta|_{B_1}\right|^2e^{-\phi_L}\leqslant M\,\right\},
\]
then for every compact $K\subseteq B_1$, there exists a constant $C_K\geqslant 0$ (independent of $s$) such that \[
\sup_{K}\left|\alpha(s)\right|\leqslant C_K
\]
for every $s\in S_M$.
\end{lemme}
\begin{proof}
%Supposons par absurde qu'il existe une suite de sections $\{s_k\}_{k\in\NN}$ telle que 
 %\[\sup_{\bar B}\left|\alpha(s_k)\right|\geqslant k.\]
The lemma is deduced from the following well known facts about the Fréchet space structure on the cohomology spaces of coherent sheaves over compact complex :
\begin{itemize}
\item[(a)]\label{lemme_hol-L2-compact_demo-a} The section $\alpha$, regarded as a morphism $\scrG_L\to\scrO_Y$, induces continuous map between Fréchet spaces
\[
\alpha|_{B_1}: \Coh^0(B_1,\scrG_L)\longrightarrow\Coh^0(B_1,\scrO_X),
\]
where the topology on the two vector spaces are induced by the uniform convergence on all the compacts (abbr. compact convergence). In general, one can endow the section space of any coherent sheaf with such a topology. Moreover, although the topologies on the two spaces $\Coh^0(B_1,\scrG_L)$ and $\Coh^0(f\inv B_1,K_{X/Y}\otimes L\otimes\scrJ(h_L))$ are different a priori, they are in fact homeomorphic under the usual identification.  
\item[(b)]\label{lemme_hol-L2-compact_demo-b} $S_M\subseteq\Coh^0(B_1,\scrG_L)$ is a compact with respect to the topology of compact convergence. This is a result of Montel's Theorem.
\item[(c)]\label{lemme_hol-L2-compact_demo-c} The compacts in $\Coh^0(B_1,\scrO_X)$ are closed and bounded, c.f.\cite[\S V.4.2, Proposition 2.1, pp.~165-166]{Car61}.
 \end{itemize}
 \end{proof}
 \end{itemize}
 
\paragraph{Demonstration of {\hyperref[pos_im-dir_B]{(B)}}:}
Let $y_0\in Y_1$, and let $\{y_k\}_{k>0}$ be any sequence in $Y_1$ convergent to $y_0$, we will prove that
\[
\limsup_{k\to +\infty}\psi_\alpha(y_k)\leqslant\psi_\alpha(y_0).
\]
The problem being local, we can replace $Y$ by $B_0$ a small open ball of centre $y_0$ ($y_0=0$ in $B_0$) in $Y$. Note $R_0:=$ the radius of $B_0$ and $B_i:=\left(\frac{1}{2}\right)^iB_0$. Since there is a subsequence of $\{\psi_\alpha(y_k)\}_{k>0}$ which converges to the limit superior of $\{\psi_\alpha(y_k)\}_{k>0}$, we can assume that the sequence $\{\psi_\alpha(y_k)\}_{k>0}$ is convergent. In addition, up to shifting the numbering of the sequence we can assume that $\{y_k\}_{k>0}\subseteq B_3$; we can also assume that $\psi_\alpha(y_k)\neq-\infty,\;\forall k$ (in particular, $h_L|_{X_{y_k}}\not\equiv+\infty$). As in the step {\hyperref[pos_im-dir_demoA1]{(A1)}} above, there exists for every $k\in\ZZ_{>0}$ a vector $v_k\in\Coh^0(X_{y_k},K_{X_{y_k}}\otimes L|_{X_{y_k}}\otimes\scrJ(h_L|_{X_{y_k}}))$ such that $\|v_k\|_{y_k,L}=1$ and
\[
\psi_\alpha(y_k)=\log\left|\alpha(y_k)(v_k)\right|^2.
\]   
Consider $B_{y_k}:=B(y_k,\frac{7}{8}R_0)$ the open ball of centre $y_k$ and of radius $\frac{7}{8}R_0$, then $B_3\subseteq B_2\subseteq B_1\subseteq B_{y_k}\subseteq B_0$. Still by {\hyperref[thm_Cao_OT]{Theorem \ref*{thm_Cao_OT}}}, we obtain a section $s_k\in\Coh^0(B_{y_k},\scrG_L)=\Coh^0(f\inv B_{y_k},K_{X/Y}\otimes L\otimes\scrJ(h_L))$ such that $s_k|_{X_{y_k}}=v_k$ and
\[
\int_{f\inv B_{y_k}}|s_k|^2e^{-\phi_L}\leqslant\left(\frac{7}{8}\right)^d\!\mu(B_0):=M'_0.
\]
Denote $F_k=\alpha(s_k)|_{B_1}$ and $\theta_k:=\log\left|F_k\right|^2$, then $F_k$ is a holomorphic function on $B_1$ and $\theta_k$ is a psh function (with analytic singularities); in addition, we have that $\psi_\alpha(y_k)=\theta_k(y_k)$. By {\hyperref[lemme_hol-L2-compact]{Lemma \ref*{lemme_hol-L2-compact}}} (taking $M=M'_0$ and $K=\bar B_2$), there is a constant $C_{\bar B_2}$ independent of $k$ such that $|F_k|\leqslant C_{\bar B_2}$ on $\bar B_2$ for every $k$; in consequence, the derivatives of $F_k$ satisfy 
\[
|\nabla F_k|^2\leqslant \tilde{C}_{\bar B_2}:=\frac{16\sqrt{n}}{R_0}C_{\bar B_2}
\]   
on $\bar B_3$ (c.f. \cite[\S V.1.2, Lemme, p.~146]{Car61}). In particular, since $\{y_k\}_{k>0}\subseteq B_3$, we have
\[
\left||F_k(0)|-|F_k(y_k)|\right|\leqslant\left|F_k(0)-F_k(y_k)\right|\leqslant\tilde{C}_{\bar B_2}|y_k-0|\to 0\text{ when }k\to+\infty,
\]
hence we get 
%(la fonction $\log$ est croissante et continue, donc commute à la limite supérieure)
\begin{equation}
\label{eq_psi-theta}
\lim_{k\to+\infty}\theta_k(y_k)=\lim_{k\to+\infty}\left(\log|F_k(y_k)|\right)=\lim_{k\to+\infty}\left(\log|F_k(0)|\right)=\lim_{k\to+\infty}\theta_k(0)
\end{equation}
By definition, we have 
\[|\alpha(s_k)|\leqslant|\alpha|_{g_{X/Y\!,L}^\ast}\left|s_k\right|_{g_{X/Y\!,L}}\quad\Rightarrow\quad\psi_\alpha+\log\lambda_k\geqslant\theta_k\,,\]
where $\lambda_k:=\Lambda_{s_k}=|s_k|_{g_{X/Y\!,L}}^2$. By passing to the limit superior we obtain (in virtue of \eqref{eq_psi-theta})
\[
\psi_\alpha(0)+\limsup_{k\to+\infty}\left(\log\lambda_k(0)\right)\geqslant\limsup_{k\to+\infty}\theta_k(0)=\lim_{k\to+\infty}\theta_k(0)=\lim_{k\to+\infty}\theta_k(y_k)=\lim_{k\to+\infty}\psi_\alpha(y_k).
\]
It remains thus to show
\[
\limsup_{k\to+\infty}\left(\log\lambda_k(0)\right)\leqslant 0,
\]
and this amounts to show (the function $\log$ being increasing and continuous)
\[
\limsup_{k\to+\infty}\lambda_k(0)\leqslant 1.
\]
Now up to taking an extraction, we can assume that the sequence $\{\lambda_k(0)\}_{k>0}$ is convergent. By the compacity of $S_{M'_0}$ (the point {\hyperref[lemme_hol-L2-compact_demo-b]{(b)}} in the proof of {\hyperref[lemme_hol-L2-compact]{Lemma \ref*{lemme_hol-L2-compact}}}), up to taking a subsequence, we can further assume that $\{s_k\}_{k>0}$ converges uniformly on all compacts in $B_1$ to a section $s\in S_{M'_0}$. By \eqref{eq_metric-g-section} (c.f. point {\hyperref[prop_metrique_im-dir_demo-a]{(a)}} in the proof of {\hyperref[prop_metrique_im-dir]{Proposition \ref*{prop_metrique_im-dir}}}) we have for $y\in B_1\cap Y_1$ that 
\begin{align*}
\lambda_k(y) &= \int_{X_0}G_{s_k}(y,\,\cdot\,)\text{Vol}_{X_0}\,, \\
\Lambda_s(y) &= \int_{X_0}G_s(y,\,\cdot\,)\text{Vol}_{X_0}.
\end{align*}
By \eqref{eq_def-fonction-G} the compact convergence $\{s_k\}_{k>0}$ implies that $\left\{G_{s_k}\right\}_{k>0}$ converges uniformly over all compacts to $G_s$ (especially over $\bar B_3$).By the point {\hyperref[prop_metrique_im-dir_demo-a]{(a)}} in the proof of {\hyperref[prop_metrique_im-dir]{Proposition \ref*{prop_metrique_im-dir}}}, the $G_{s_k}$'s as well as $G_s$ are all lower semi-continuous functions, thus 
\begin{align*}
G_{s_k}(0,\,\cdot\,) &\leqslant \liminf_{l\to+\infty}G_{s_k}(y_l,\,\cdot\,)\,,\\
G_s(0,\,\cdot\,) &\leqslant \liminf_{l\to+\infty}G_s(y_l,\,\cdot\,)\,,
\end{align*} 
and in consequence (by a diagonal process)
\[
G_s(0,\,\cdot\,)\leqslant\liminf_{k\to+\infty}G_{s_k}(y_k,\,\cdot\,).
\]
Then Fatou's lemma implies that,
\begin{align*}
\lim_{k\to+\infty}\lambda_k(0) &=\Lambda_s(0)=\int_{X_0}G_s(0,\,\cdot\,)\text{Vol}_{X_0}\leqslant\int_{X_0}\liminf_{k\to+\infty}G_{s_k}(y_k,\,\cdot\,)\text{Vol}_{X_0} \\
&\leqslant\liminf_{k\to+\infty}\int_{X_0}G_{s_k}(y_k,\,\cdot\,)=\liminf_{k\to+\infty}\lambda_k(y_k)=1,
\end{align*}
which proves the result.
 
\paragraph{Demonstration of {\hyperref[pos_im-dir_C]{(C)}}:}
Let $\Delta$ be any disc contained in $Y_1$, we will prove that 
\begin{equation}
\label{eq_inegalite-moyenne}
\psi_\alpha(0)\leqslant\frac{1}{\mu(\Delta)}\int_\Delta \psi_\alpha d\mu.
\end{equation}
We can assume that $Y=\Delta\,(=Y_1=Y_0)$, in particular, $f$ is a smooth fibration. If $\psi_\alpha(0)=-\infty$, then the inequality \eqref{eq_inegalite-moyenne} is automatically established; hence we can assume that $\psi_\alpha(0)\neq-\infty$, in particular $h_L|_{X_0}\not\equiv+\infty$. As in the step {\hyperref[pos_im-dir_demoA1]{(A1)}}, there is a section $v\in\Coh^0(X_0, K_{X_0}\otimes L|_{X_0}\otimes\scrJ(h_L|_{X_0}))$ such that $\|v\|_{0,L}=1$ and
\[
\psi_\alpha(0)=\log\left|\alpha(0)(v)\right|^2.
\]
Again by {\hyperref[thm_Cao_OT]{Theorem \ref*{thm_Cao_OT}}} we get a section $s\in\Coh^0(X,K_{X/\Delta}\otimes L\otimes\scrJ(h_L))$ such that $s|_{X_0}=v$ and
\[
\int_\Delta\Lambda_s(t)dt=\int_X |s|^2e^{-\phi_L}\leqslant \mu(\Delta).
\] 
In particular $\left(\log|\alpha(s)|^2\right)(0)=\psi_\alpha(0)$. By definition we have
\[
|\alpha(s)|\leqslant |\alpha|_{g_{X/Y\!,L}^\ast}|s|_{g_{X/Y\!,L}}\quad\Rightarrow\quad\psi_\alpha+\log\Lambda_s\geqslant\log|\alpha(s)|^2.
\]
The function $\log|\alpha(s)|$ being psh on $\Delta$, it satisfies the mean value inequality, hence we have 
\[
\frac{1}{\mu(\Delta)}\int_{\Delta}\psi_\alpha d\mu+\frac{1}{\mu(\Delta)}\int_\Delta\log\Lambda_s d\mu\geqslant\frac{1}{\mu(\Delta)}\int_\Delta\log|\alpha(s)|d\mu\geqslant\left(\log|\alpha(s)|^2\right)(0)=\psi_\alpha(0). 
\]
It remains to show that
\[
\int_{\Delta}\log\Lambda_s d\mu\leqslant 0,
\]
but the function $\log$ being concave, this is a result of Jensen's inequality: $\Lambda_s$ being integrable, we have
\[
\int_\Delta\log\Lambda_s\frac{d\mu}{\mu(\Delta)}\leqslant \log\left(\int_\Delta\Lambda_s\frac{d\mu}{\mu(\Delta)}\right)=\log 1=0.
\]
This proves \eqref{eq_inegalite-moyenne}, and thus finishes the proof of the step {\hyperref[pos_im-dir_C]{(C)}}. Hence $g_{X/Y\!,L}$ is a semipositively curved singular Hermitian metric on $\scrG_L$.

\paragraph{}
In order to finish the proof of {\hyperref[thm_pos_im-dir]{Theorem \ref*{thm_pos_im-dir}}}, it remains to show that $(\scrG_L\,,g_{X/Y\!,L})$ satisfies the $L^2$ extension property. To this end, take an open subset $U$ of $X$ and $Z$ an analytic subset of $U$ and , and take a local section $s\in\Coh^0(U\backslash Z,\scrG_L)$ satisfying the $L^2$ integrability condition, we will show that $s$ extends to a section over $U$. The problem being local, we can replace $U$ by a small ball $B$ in $Y$ (with $t_1,\cdots,t_d$ the standard coordinates). 
%moreover we can assume that $Z$ contains $U\backslash(U\cap Y_1)$, in particular $\scrG_L$ is locally free over $U\backslash Z$. 
Then $s\in\Coh^0(B\cap Y_1,\scrG_L)=\Coh^0(f\inv Y_1,K_{X/Y}\otimes L\otimes\scrJ(h_L))$ satisfies the following $L^2$ condition: 
\[
\int_B\left(|s|_{g_{X/Y\!,L}}^2\right)\eta=\int_{f\inv B} |s\wedge f^\ast\eta|^2e^{-\phi_L}<+\infty,
\]
where $\eta=dt_1\wedge\cdots\wedge dt_d$ is a nowhere vanishing holomorphic $d$-form (giving rise to a trivialization $K_B\simeq\scrO_B\cdot\eta$). Then it is an elementary consequence of Riemann extension that $s$ extends to a section in $\Coh^0(f\inv B,K_{X/Y}\otimes L\otimes\scrJ(h_L))=\Coh^0(B,\scrG_L)$, meaning that $(\scrG_L\,,g_{X/Y\!,L})$ satisfies the $L^2$ extension property. This finishes the proof of {\hyperref[thm_pos_im-dir]{Theorem \ref*{thm_pos_im-dir}}}.

\subsection{Positivity of Direct Images of Twisted Pluricanonical Bundles}
%\subsection{Proof of  \texorpdfstring{\hyperref[thm_pos-im-pluri-can]{Theorem \ref*{thm_pos-im-pluri-can}}}{text}}
\label{ss_pos-im-dir_pluri-can}

In this subsection, we will apply {\hyperref[thm_Cao_Bergman]{Theorem \ref*{thm_Cao_Bergman}}} and {\hyperref[thm_pos_im-dir]{Theorem \ref*{thm_pos_im-dir}}} to prove {\hyperref[thm_pos_im-dir]{Theorem \ref*{thm_pos_im-dir}}}, which will serve as a key ingredient in the demonstration of our {\hyperref[main-thm]{Main Theorem}}.

\begin{proof}[Proof of {\hyperref[thm_pos-im-pluri-can]{Theorem \ref*{thm_pos-im-pluri-can}}}]
Recall that
\[
\scrF_{m,\Delta}:=f_\ast\left(K_{X/Y}\ptensor[m]\otimes\scrO_X(m\Delta)\right).
\]
If $\scrF_{m,\Delta}=0$, then there is nothing to prove; hence we assume that $\scrF_{m,\Delta}=0$. Since $(X,\Delta)$ is klt (implying that $(X_y,\Delta_y)$ is klt for $y$ general by \cite[\S 9.5.D, Theorem 9.5.35, pp.~210-211, vol.II]{Laz04}) and $\scrF_{m,\Delta}\neq 0$, the condition in the hypothesis of {\hyperref[thm_Cao_Bergman]{Theorem \ref*{thm_Cao_Bergman}}} is satisfied for $L=\scrO_X(m\Delta)$ and $h_L=h_\Delta\ptensor[m]$ where $h_\Delta$ is the canonical (singular) Hermitian metric defined by the local equations of $\Delta$, then we obtain a singular Hermitian metric $h_{X/Y\!,m\Delta}^{(m)}$ over $K_{X/Y}\ptensor[m]\otimes\scrO_X(m\Delta)$ whose curvature current is positive. However one cannot directly apply {\hyperref[thm_pos_im-dir]{Theorem \ref*{thm_pos_im-dir}}} to obtain a semipositively curved singular Hermitian metric on $\scrF_{m,\Delta}$. In order to overcome this difficulty, we introduce the line bundle
\[
L_{m-1}=K_{X/Y}\ptensor[(m-1)]\otimes\scrO_X(m\Delta),
\]
equipped with the metric 
\[
h_{L_{m-1}}:=(h_{X/Y\!,m\Delta}^{(m)})^{\otimes\frac{m-1}{m}}\otimes h_\Delta.
\]
Then the curvature current of $h_{L_{m-1}}$ is positive. We are now ready to apply {\hyperref[thm_pos_im-dir]{Theorem \ref*{thm_pos_im-dir}}} to $L=L_{m-1}$, except that we need to establish in addition that the natural inclusion 
\[
f_\ast\left(K_{X/Y}\otimes L_{m-1}\otimes\scrJ(h_{L_{m-1}})\right)\hookrightarrow\scrF_{m,\Delta}
\]
is generically an isomorphism. 

To this end, let $Y_2$ be the (analytic) Zariski open subset of $Y_0$ satisfying the conditions {\hyperref[hyp_Y1_1]{(i)}}{\hyperref[hyp_Y1_2]{(i\!i)}} in the definition of $Y_1$ for $L=L_{m-1}$ (see {\hyperref[notation_Y1]{Notations}}) and such that the pair $(X_y,\Delta_y)$ is klt for $\forall y\in Y_2$ (c.f. \cite[\S 9.5.D, Theorem 9.5.35, pp.~210-211, vol.II]{Laz04}). By virtue of the base change property of $\scrF_{m,\Delta}$ over $Y_2$ and {\hyperref[lemme_inclusion-sur-Y1]{Lemme \ref*{lemme_inclusion-sur-Y1}}}, it suffices to prove:

\begin{lemme}
\label{lemme_gen-iso_im-pluri-can}
The natural inclusion 
\begin{equation}
\label{eq_incl-mult-ideal}
\Coh^0(X_y,K_{X_y}\otimes L_{m-1}|_{X_y}\otimes\scrJ(h_{L_{m-1}}|_{X_y}))\hookrightarrow\Coh^0(X_y,K_{X_y}\otimes L_{m-1}|_{X_y})
\end{equation}
is an isomorphism (or equivalently, surjective) for $y\in Y_2$.
\end{lemme}
\begin{proof}[Proof of {\hyperref[lemme_gen-iso_im-pluri-can]{Lemma \ref*{lemme_gen-iso_im-pluri-can}}}]
 
 that  Let $v\in\Coh^0(X_y,K_{X_y}\otimes L_{m-1}|_{X_y})=\Coh^0(X_y,K_{X_y}\ptensor[m]\otimes\scrO_{X_y}(m\Delta_y))$, then with the same notations as in {\hyperref[ss_pos-im-dir_Bergman]{\S\ref*{ss_pos-im-dir_Bergman}}} we can write
\[
v\wedge(dt_1\wedge\cdots\wedge dt_d)\ptensor[m]=F_v\cdot(dz_1\wedge\cdots\wedge dz_n)\ptensor[m].
\]
Since $\Delta_y$ is klt, we have
\begin{equation}
\label{eq_klt-int-L2/m}
\|v\|_{m,y,m\Delta}^{\frac{2}{m}}=\int_{X_y}|v|^{\frac{2}{m}}e^{-\phi_\Delta}=\int_{X_y}\left(|F_v|^{\frac{2}{m}}e^{-\phi_\Delta}\right)\text{Vol}_{X_y}<+\infty,
\end{equation}
where $\phi_\Delta$ denotes the local weight of the metric $h_\Delta$. By \eqref{eq_def-Bergman} (c.f. also \cite[\S A.2, p.~8]{BP10}) the local weight $\phi_{X/Y\!,m\Delta}^{(m)}$ satisfies 
\[
\phi_{X/Y\!,m\Delta}^{(m)}=\log\left(\sup_{\|u\|_{m,y,m\Delta}\leqslant 1}|F_u|^2\right)\geqslant \log\left(\frac{|F_v|^2}{\|v\|^2_{m,y,m\Delta}}\right),
\]
and thus
\begin{equation}
\label{eq_bound-Fv}
\log|F_v|^2\leqslant \phi_{X/Y\!,\Delta}^{(m)}+O(1)\quad\Rightarrow\quad |F_v|^{\frac{2(m-1)}{m}}e^{-\frac{m-1}{m}\phi_{X/Y\!,m\Delta}^{(m)}}\leqslant O(1).
\end{equation}
Regarded as a holomorphic $n$-form with values in the line bundle $L_{m-1}|_{X_y}$, the section $v$ satisfies 
\[
|v|^2e^{-\phi_{L_{m-1}}}=|F_v|^2e^{-\frac{m-1}{m}\phi_{X/Y\!,m\Delta}^{(m)}-\phi_\Delta}\cdot\text{Vol}_{X_y},
\]
where $\phi_{L_{m-1}}$ denotes the local weight of the metric $h_{L_{m-1}}$, hence by \eqref{eq_klt-int-L2/m} and \eqref{eq_bound-Fv} we have
\begin{align*}
\|v\|_{y,L_{m-1}}^2 &=\int_{X_y}|v|^2e^{-\phi_{L_{m-1}}}
=\int_{X_y}\left(|F_v|^2e^{-\frac{m-1}{m}\phi_{X/Y\!,\Delta}^{(m)}-\phi_\Delta}\right)\text{Vol}_{X_y} \\
&=\int_{X_y}\left(|F_v|^{\frac{2}{m}}e^{-\phi_\Delta}\right)\cdot\left(|F_v|^{\frac{2(m-1)}{m}}e^{-\frac{m-1}{m}\phi_{X/Y\!,\Delta}^{(m)}}\right)\text{Vol}_{X_y} \\
&\leqslant C\int_{X_y}\left(|F_v|^{\frac{2}{m}}e^{-\phi_\Delta}\right)\text{Vol}_{X_y}=C\cdot\|v\|_{m,y,\Delta}^{\frac{2}{m}}<+\infty,
\end{align*}
where $C$ is a constant given by \eqref{eq_bound-Fv}. Therefore $v\in\Coh^0(X_y,K_{X_y}\otimes L|_{X_y}\otimes\scrJ(h_{L_{m-1}}|_{X_y}))$, which proves the lemma.
\end{proof}
\end{proof}

\begin{rmq}
\label{rmq_klt-gen-iso}
In general, let $N$ be a $\QQ$-line bundle endowed with a semipositively curved singular Hermitian metric $h_N$ such that $\scrJ(h_N)=\scrO_X$. Then by a Fubini type argument as in {\hyperref[lemme_Fubini_multiplier-ideal]{Lemma \ref*{lemme_Fubini_multiplier-ideal}}} we have that $\scrJ(h_N|_{X_y})=\scrO_{X_y}$ for almost every $y\in Y_1$. In consequence, if the direct image sheaf $f_\ast(K_{X/Y}\ptensor[m]\otimes N\ptensor[m])\neq 0$, then we can construct the relative $m$-Bergman kernel metric on $K_{X/Y}\ptensor[m]\otimes N\ptensor[m]$ whose curvature current is positive. Then by the same argument as above, we can prove that the natural inclusion
\[
\Coh^0(X_y,K_{X_y}\otimes N_{m-1}|_{X_y}\otimes\scrJ(h_{N_{m-1}}|_{X_y}))\hookrightarrow\Coh^0(X_y,K_{X_y}\otimes N_{m-1}|_{X_y})
\]
is an isomorphism for almost every (a posteriori, for general) $y\in Y_1$\,, where \[
N_{m-1}:=K_{X/Y}\ptensor[(m-1)]\otimes N\ptensor[m]
\]
and 
\[
h_{N_{m-1}}:=(h_{X/Y\!,mN}^{(m)})\ptensor[\frac{m-1}{m}]\otimes h_N.
\]
In consequence, the natural inclusion
\[
f_\ast\left(K_{X/Y}\otimes N_{m-1}\otimes\scrJ(h_{N_{m-1}})\right)\hookrightarrow f_\ast\left(K_{X/Y}\ptensor[m]\otimes N\ptensor[m]\right) 
\]
is generically an isomorphism.
\end{rmq}

By combining {\hyperref[thm_pos-im-pluri-can]{Theorem \ref*{thm_pos-im-pluri-can}}} and {\hyperref[thm_det-triv_herm-plat]{Theorem \ref*{thm_det-triv_herm-plat}}} we immediately get:

\begin{cor}
\label{cor_im-pluri-can_plat}
Let $f:X\to Y$ and $\Delta$ as in {\hyperref[thm_pos-im-pluri-can]{Theorem \ref*{thm_pos-im-pluri-can}}}. Suppose that the determinant of $\scrF_{m,\Delta}$ is numerically trivial. Then $\left(\scrF_{m,\Delta}\,,g_{X/Y\!,\Delta}^{(m)}\right)$ is a Hermitian flat vector bundle. 
\end{cor}
%\documentclass{Article-fr}

%\begin{document}
\section{Log Kähler Version of Results of Kawamata and of Viehweg}
\label{sec_Kah-Viehweg-Kawamata}

In this section we will apply to prove {\hyperref[thm_Viehweg_Iitaka-type-gen]{Theorem \ref*{thm_Viehweg_Iitaka-type-gen}}} and {\hyperref[thm_Viehweg_Iitaka-det-gros]{Theorem \ref*{thm_Viehweg_Iitaka-det-gros}}}, and then use {\hyperref[thm_Viehweg_Iitaka-type-gen]{Theorem \ref*{thm_Viehweg_Iitaka-type-gen}}} to deduce {\hyperref[thm_Kawamata_Ab-Var]{Theorem \ref*{thm_Kawamata_Ab-Var}}}. Let us remark that in \cite{Kaw81} a result equivalent to {\hyperref[thm_Kawamata_Ab-Var]{Theorem \ref*{thm_Kawamata_Ab-Var}}} with $\Delta=0$ is also stated (\cite[Theorem 25]{Kaw81}).
%but the proof is not complete; in fact, the demonstration of \cite[Theorem 1]{Kaw81} cannot be generalized to the Kähler case: for example, it makes use of the Block-Giesker covering trick (\cite[Theorem 17]{Kaw81}), which is not true in the non-algebraic case\footnote{More precisely, in \cite{Kaw81} Theorem 1 is deduced from Theorem 3, which is the algebraic version of {\hyperref[thm_Viehweg_Iitaka-type-gen]{Theorem \ref*{thm_Viehweg_Iitaka-type-gen}}}; As for Theorem 3, it is deduced from Theorem 5 by the Corollary 18, and the latter is a consequence of Theorem 17. Nevertheless in \cite[\S 3, Lemma 3.1]{Nak87} and in \cite[\S II.5.11, p.~66]{Nak04} N.Nakayama proved an analytic version of \cite[Theorem 17]{Kaw81} under the hypothesis of existence of divisors which making the $D_i$'s divisible (resp. of the pseudo-convexity of $X$ and of the existence of positive line bundles).}.

Classically the proof of {\hyperref[thm_Viehweg_Iitaka-type-gen]{Theorem \ref*{thm_Viehweg_Iitaka-type-gen}}} and of {\hyperref[thm_Viehweg_Iitaka-det-gros]{Theorem \ref*{thm_Viehweg_Iitaka-det-gros}}} are based on Viehweg's weak positivity theorem on the direct image; here we will take a new argument which only depends on the Ohsawa-Takegoshi type extension {\hyperref[thm_Deng_OT]{Theorem \ref*{thm_Deng_OT}}}. Precisely, {\hyperref[thm_Deng_OT]{Theorem \ref*{thm_Deng_OT}}} is used to ensure the effectivity of the twisted relative canonical bundle up to adding an ample line bundle from the base, in virtue of the following auxiliary result:
\begin{lemme}
\label{lemme_kod-eff+pull-ample}
Let $f:X\to Y$ be an analytic fibre space with $X$ a normal complex variety and $Y$ a projective variety. Let $L$ be a holomorphic line bundles on $X$ such that $\kappa(L)\geqslant 0$ and let $A$ be a ample line bundle on $Y$. Then 
\[
\kappa(X,L\otimes f^\ast A)=\kappa(F,L|_F)+\dim Y
\]
where $F$ denotes the general fibre of $f$.
\end{lemme}

Before giving the proof, let us remark that this simple but useful result has been implicitly used in the works on $C_{n,m}$\,, e.g. \cite{Esn81, Vie83}; it is explicitly formulated in \cite[Lemma 4.9]{Cam04} but without proof. For the convenience of the readers, we will give the detailed proof.

\begin{proof}[Proof of {\hyperref[lemme_kod-eff+pull-ample]{Lemma \ref*{lemme_kod-eff+pull-ample}}}]
%First note that we can assume that $X$ is normal. Furthermore, 
Up to multiplying $L$ and $A_Y$ by a sufficiently large and divisible integer, we can assume that $\Coh^0(X,L)\neq 0$ and $A$ is very ample; we can further assume that the closure of the image of the meromorphic mapping 
\[
\Phi:=\Phi_{|L\otimes f^\ast A|}:X\dashrightarrow \PP V
\]
with $V:=\Coh^0(X,L\otimes f^\ast A)$ is of dimension $\kappa(X,L\otimes f^\ast A)$. Up to blowing up $X$ we can assume that $\Phi$ is an analytic fibre space (c.f. \cite[Lemma 5.3, pp.~51-52, and Corollary 5.8, p.~57]{Uen75}). Then consider the sub-linear series defined by the inclusion
\[
\Coh^0(Y,A)\simeq\Coh^0(X,f^\ast A)\hookrightarrow \Coh^0(X,L\otimes f^\ast A)\simeq \Coh^0(\PP V,\scrO_{\PP V}(1)),
\]
this gives rise to a meromorphic mapping
\[
\PP V\dashrightarrow \PP\Coh^0(Y,A).
\]
On the other hand, since $A$ is very ample, the linear series $|A|$ defines an closed embedding $i:=\Phi_{|A|}:Y\hookrightarrow \PP\Coh^0(Y,A)$, thus we have the following "commutative" diagram:
\begin{center}
\begin{tikzpicture}[scale=2.5]
\node (A) at (0,0) {$Y$};
\node (B) at (0,1) {$X$};
\node (C) at (2,0) {$\PP\Coh^0(Y,A)$.};
\node (D) at (2,1) {$\PP V$};
\path[->,font=\scriptsize,>=angle 90]
(B) edge node[left]{$f$} (A)
(B) edge node[above right]{$\Phi_{|f^\ast A|}$} (C)
(B) edge node[above]{$\Phi:=\Phi_{|L\otimes f^\ast A|}$} (D);
\path[right hook->,font=\scriptsize,>=angle 90]
(A) edge node[below]{$i:=\Phi_{|A|}$} (C);
\path[dashed,->,font=\scriptsize,>=angle 90]
(D) edge (C);
\end{tikzpicture}
\end{center}
In particular, the general fibre $G$ of $\Phi$ is contracted by $f$, hence we get an analytic fibre space 
\[
\Phi|_F: F\to\Image(\Phi|_F),
\]
whose general fibre is isomorphic to $G$. $\Phi|_F$ is defined by the linear series $|L\otimes f^\ast A|$ restricted to $F$, which is a sub-linear series of $|(L\otimes f^\ast A)|_F|\simeq|L|_F|$, hence we have
\[
\kappa(F,L|_F)\geqslant \dim\Image(\Phi|_F)=\dim\Image\Phi-\dim Y=\kappa(X,L\otimes f^\ast A)-\dim Y.
\]
In addition, by applying the easy inequality \cite[Theorem 5.11, pp.~59-60]{Uen75} to $\Phi|_F$ and $(L\otimes f^\ast A_Y)|_F$ we get
\[
\kappa(F,L|_F)=\kappa(F,(L\otimes f^\ast A)|_F)\leqslant \kappa(G,(L\otimes f^\ast A)|_G)+\dim\Image(\Phi|_F)=\dim\Image(\Phi|_F),
\]
therefore $\kappa(X,L\otimes f^\ast A)=\kappa(F,L|_F)+\dim Y$.
\end{proof}

\subsection{Kähler Version of \texorpdfstring{$C_{n,m}^{\log}$}{text} over General Type Bases}
\label{ss_Kah-Viehweg-Kawamata_type-gen}
In this subsection we will apply the Ohsawa-Takegoshi type extension {\hyperref[thm_Deng_OT]{Theorem \ref*{thm_Deng_OT}}} to recover the result that $C_{n,m}^{\log}$ holds for fibre spaces over general type bases, i.e. to give a new proof of the following theorem:
\begin{thm}[Kähler version of {\cite[Theorem 3]{Kaw81}}, {\cite[Theorem III]{Vie83}}]
\label{thm_Viehweg_Iitaka-type-gen}
%The {\hyperref[conj_Iitaka_kah]{Conjecture \ref*{conj_Iitaka_kah}}} is true for the Kähler fibre space 
Let $f: X\to Y$ be a surjective morphism between compact Kähler manifolds whose general fibre $F$ is connected. And let $\Delta$ be an $\QQ$-effective divisor on $X$ such that $(X,\Delta)$ is klt. Suppose that $Y$ of general type (thus projective). Then 
\[
\kappa(X,K_X+\Delta)\geqslant\kappa(F,K_F+\Delta_F)+\dim Y,
\]
where $\Delta_F:=\Delta|_F$
\end{thm}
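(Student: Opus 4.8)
The plan is to derive the inequality from the Ohsawa--Takegoshi type extension {\hyperref[thm_Deng_OT]{Theorem \ref*{thm_Deng_OT}}} together with {\hyperref[lemme_kod-eff+pull-ample]{Lemma \ref*{lemme_kod-eff+pull-ample}}}: the extension theorem makes the twisted relative canonical bundle effective after pulling back a sufficiently ample line bundle from the base, this effectivity feeds {\hyperref[lemme_kod-eff+pull-ample]{Lemma \ref*{lemme_kod-eff+pull-ample}}}, and the auxiliary ample divisors are finally absorbed into a large multiple of $f^\ast K_Y$ using that $K_Y$ is big.

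First I would dispose of the trivial case: if $\kappa(F,K_F+\Delta_F)=-\infty$ there is nothing to prove, so assume $\kappa_F:=\kappa(F,K_F+\Delta_F)\geqslant 0$. Since $Y$ is of general type and Kähler it is Moishezon, hence projective, and $K_Y$ is big; moreover only the general fibre of $f$ enters the statement, so after a Stein factorization we may assume $f$ is an analytic fibre space (the base of the factorization is finite over $Y$, hence still projective and of general type). Now fix an ample line bundle $A_0$ on $Y$ positive enough for {\hyperref[thm_Deng_OT]{Theorem \ref*{thm_Deng_OT}}} to apply to $f:X\to Y$ with boundary $\Delta$ --- the klt hypothesis ensures that the multiplier ideal of $\Delta$ is trivial along the general fibre, so that the fibrewise pluricanonical sections are $L^2$ --- fix any ample line bundle $A_1$ on $Y$, and choose $m\geqslant 1$ such that $m\Delta$ is integral and $mK_Y-A_0-A_1$ is $\QQ$-effective, which is possible precisely because $K_Y$ is big.

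With these choices, {\hyperref[thm_Deng_OT]{Theorem \ref*{thm_Deng_OT}}} yields the surjectivity of the restriction map
\[
\Coh^0\bigl(X,\,m(K_{X/Y}+\Delta)+f^\ast A_0\bigr)\longrightarrow\Coh^0\bigl(F,\,m(K_F+\Delta_F)\bigr),
\]
and since $\kappa_F\geqslant 0$ the target is nonzero, so $L:=m(K_{X/Y}+\Delta)+f^\ast A_0$ admits a nonzero section and $\kappa(X,L)\geqslant 0$. As $f^\ast A_0|_F$ is trivial and $K_{X/Y}|_F\simeq K_F$ by adjunction, we have $L|_F\simeq m(K_F+\Delta_F)$, so {\hyperref[lemme_kod-eff+pull-ample]{Lemma \ref*{lemme_kod-eff+pull-ample}}} applied to $L$ and $A_1$ gives
\[
\kappa\bigl(X,\,m(K_{X/Y}+\Delta)+f^\ast(A_0+A_1)\bigr)=\kappa\bigl(F,\,m(K_F+\Delta_F)\bigr)+\dim Y=\kappa_F+\dim Y.
\]
Writing $m(K_X+\Delta)=\bigl[m(K_{X/Y}+\Delta)+f^\ast(A_0+A_1)\bigr]+f^\ast(mK_Y-A_0-A_1)$ and observing that adding the pull-back of the $\QQ$-effective divisor $mK_Y-A_0-A_1$ cannot decrease the Kodaira dimension, we conclude $\kappa(X,K_X+\Delta)=\kappa\bigl(X,m(K_X+\Delta)\bigr)\geqslant\kappa_F+\dim Y$.

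The entire difficulty is concentrated in {\hyperref[thm_Deng_OT]{Theorem \ref*{thm_Deng_OT}}}: extending pluricanonical forms from the general fibre to the total space in the compact Kähler (a priori non-projective) setting, with the sharp control by an ample line bundle pulled back from the base. Once that analytic input is in hand the remaining steps are formal; the only points needing a little attention are that the amount of positivity required of $A_0$ is bounded independently of $m$ --- so that $m$ can afterwards be chosen large enough to make $mK_Y-A_0-A_1$ effective --- and the reduction to $f$ being an honest analytic fibre space.
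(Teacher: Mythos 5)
Your proposal is correct and takes essentially the same route as the paper: the Ohsawa--Takegoshi extension (which, exactly as in the paper's argument, is fed by the semi-positively curved relative Bergman-kernel metric on $(m-1)K_{X/Y}+m\Delta$) produces a nonzero section of $m(K_{X/Y}+\Delta)+f^\ast A_0$, Lemma \ref{lemme_kod-eff+pull-ample} then yields $\kappa=\kappa_F+\dim Y$ after twisting by a second ample $A_1$, and the ample twists are absorbed into $mK_Y$ by bigness --- which is precisely the content of Lemma \ref{lemme_Viehweg_serie-lin} combined with the Kodaira-lemma step in the paper. The only bookkeeping to add is that $m$ must also be chosen large and divisible enough that $\Coh^0(F,m(K_F+\Delta_F))\neq 0$, so that the restriction map you invoke actually has nonzero target.
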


Let us remark that by virtue of the easy inequality \cite[Theorem 5.11, pp.~59-60]{Uen75}, the inequality in the theorem is in fact an equality. In order to establish {\hyperref[thm_Viehweg_Iitaka-type-gen]{Theorem \ref*{thm_Viehweg_Iitaka-type-gen}}}, we first prove the following lemma, which can be regarded as a (log) Kähler version of \cite[Corollary 7.1]{Vie83}:

\begin{lemme}
%[log Kähler version of {\cite[Corollary 7.1]{Vie83}}]
\label{lemme_Viehweg_serie-lin}
Let $f:X\to Y$ be an analytic fibre space with $X$ a (compact) Kähler manifold and $Y$ a smooth projective variety. Let $\Delta$ be an effective $\QQ$-divisor on $X$ such that the pair $(X,\Delta)$ is klt. Then for any ample $\QQ$-line bundle $A_Y$ on $Y$, 
%and for any positive integer $m>0$, 
we have
\begin{equation}
\label{eq_Kod-fibre+dim-base}
\kappa(X,K_{X/Y}+\Delta+f^\ast A_Y)=\kappa(F,K_F+\Delta_F)+\dim Y. 
\end{equation}
where $F$ denotes the general fibre of $f$, and $\Delta_F:=\Delta|_F$.
\end{lemme}

\begin{proof}
If $\kappa(F,K_F+\Delta_F)=-\infty$, then for any integer $\mu>0$ sufficiently large and divisible (so that $A_Y\ptensor[\mu]$ is a line bundle and $\mu\Delta$ is an integral divisor) we have 
\[
\scrF_{\mu,\Delta}:=f_\ast\left(K_{X/Y}\ptensor[\mu]\otimes\scrO_X(\mu\Delta)\right)=0, 
\]
thus $\scrF_{\mu,\Delta}\otimes A_Y\ptensor[\mu]=0$, and in particular
\[
\Coh^0(X,K_{X/Y}\ptensor[\mu]\otimes\scrO_X(\mu\Delta)\otimes f^\ast A_Y\ptensor[\mu])=\Coh^0(Y,\scrF_{\mu,\Delta}\otimes A_Y\ptensor[\mu])=0,
\]
therefore $\kappa(X,K_{X/Y}+\Delta+f^\ast A_Y)=-\infty$, hence the equality \eqref{eq_Kod-fibre+dim-base}.

Suppose in the sequel that $\kappa(F,K_F+\Delta_F)\geqslant 0$. Let $m$ be a sufficiently large and divisible positive integer, so that $A_Y\ptensor[m]$ is a line bundle, $m\Delta$ is an integral divisor, $\scrF_{m,\Delta}\neq 0$ and that there is a very ample line bundle $A'_Y$ on $Y$ which satisfies $(A'_Y)\ptensor[2]\simeq A_Y\ptensor[m]$ and the following inequality for Seshadri constant
\[
\epsilon(A'_Y\otimes K_Y\inv,y)>\dim Y,\quad\text{for general }y\in Y.
\]
By {\hyperref[thm_Cao_Bergman]{Theorem \ref*{thm_Cao_Bergman}}} the relative $m$-Bergman kernel metric $h_{X/Y\!,m\Delta}^{(m)}$ on $K_{X/Y}\ptensor[m]\otimes\scrO_X(m\Delta)$ is semi-positively curved. Then as in the proof of {\hyperref[thm_pos-im-pluri-can]{Theorem \ref*{thm_pos-im-pluri-can}}} we consider the line bundle 
\[
L_{m-1}:=K_{X/Y}\ptensor[(m-1)]\otimes\scrO_X(m\Delta)
\]
equipped with the semi-positively curved metric
\[
h_{L_{m-1}}:= (h_{X/Y\!,m\Delta}^{(m)})\ptensor[\frac{m-1}{m}]\otimes h_\Delta,
\]
where $h_{\Delta}$ denotes the singular Hermitian metric whose local weight is defined by the local equation of $\Delta$. Then apply {\hyperref[thm_Deng_OT]{Theorem \ref*{thm_Deng_OT}}} to $L=L_{m-1}$ (by virtue of {\hyperref[lemme_gen-iso_im-pluri-can]{Lemma \ref*{lemme_gen-iso_im-pluri-can}}}) and we get a surjection
\[
\Coh^0(X,K_X\otimes L_{m-1}\otimes f^\ast(A'_Y\otimes K_Y\inv))\twoheadrightarrow \Coh^0(F,K_F\otimes L_{m-1}|_F),
\]
i.e.  
\[
\Coh^0(X,K_{X/Y}\ptensor[m]\otimes\scrO_X(m\Delta)\otimes f^\ast A'_Y)\twoheadrightarrow \Coh^0(F,K_F\ptensor[m]\otimes\scrO_F(m\Delta_F)),
\]
which implies that 
\begin{equation}
\label{eq_nonvanishing-can+delta+A'}
\Coh^0(X,K_{X/Y}\ptensor[m]\otimes\scrO_X(m\Delta)\otimes f^\ast A'_Y)\neq 0.
\end{equation} 
By \eqref{eq_nonvanishing-can+delta+A'} we can apply {\hyperref[lemme_kod-eff+pull-ample]{Lemma \ref*{lemme_kod-eff+pull-ample}}} to $L=K_{X/Y}\ptensor[m]\otimes\scrO_X(m\Delta)\otimes f^\ast A'_Y$ and $A=A'_Y$ and we get
\begin{align*}
\kappa(X,K_{X/Y}+\Delta+f^\ast A_Y) &=\kappa(X,(mK_{X/Y}+m\Delta+f^\ast A'_Y)+f^\ast A'_Y) \\
&=\kappa(F, (mK_{X/Y}+m\Delta+f^\ast A'_Y)|_F)+\dim Y \\
&=\kappa(F,K_F+\Delta_F)+\dim Y. 
\end{align*}
\end{proof}

By virtue of {\hyperref[lemme_Viehweg_serie-lin]{Lemma \ref*{lemme_Viehweg_serie-lin}}}, one easily deduces {\hyperref[thm_Viehweg_Iitaka-type-gen]{Theorem \ref*{thm_Viehweg_Iitaka-type-gen}}}: 

\begin{proof}[Proof of {\hyperref[thm_Viehweg_Iitaka-type-gen]{Theorem \ref*{thm_Viehweg_Iitaka-type-gen}}}]
Since $Y$ is of general type, it is projective. Thus fix an ample line bundle $H$ on $Y$; its canonical bundle $K_Y$ being big, the Kodaira Lemma (c.f. \cite[Lemma 2.60, pp.~67-68]{KM98}) implies that there exists an integer $b>0$ such that $ K_Y\ptensor[b]\otimes H\inv$ is effective. Now by {\hyperref[lemme_Viehweg_serie-lin]{Lemma \ref*{lemme_Viehweg_serie-lin}}} we obtain 
\begin{align*}
\kappa(X, K_X+\Delta) &\geqslant \kappa(X, bK_{X/Y}+b\Delta+H) \\
&=\kappa(F, K_F+\Delta_F)+\dim Y,
\end{align*}
thus we prove {\hyperref[thm_Viehweg_Iitaka-type-gen]{Theorem \ref*{thm_Viehweg_Iitaka-type-gen}}}.
\end{proof}

\subsection{Iitaka Conjecture for Kähler Fibre Spaces with Big Determinant Bundle of the Direct Image of Relative Pluricanonical Bundles}
 \label{ss_Kah-Viehweg-Kawamata_det-gros}
The proof of {\hyperref[main-thm_I]{Main Theorem, Part (I)}} is obtained by combining {\hyperref[lemme_kod-eff+pull-ample]{Lemma \ref*{lemme_kod-eff+pull-ample}}} and {\hyperref[thm_Deng_OT]{Theorem \ref*{thm_Deng_OT}}} plus the following result:

\begin{thm}[Kähler version of {\cite[Theorem 3.4]{CP17}}]
\label{thm_CP_canonique:det-im-dir}
Let $f:X\to Y$ surjective morphism with $X$ a compact Kähler manifold and $Y$ a smooth projective variety such that the general fibre $F$ of $f$ is connected. Let $L$ be a holomorphic $\QQ$-line bundle on $X$ equipped with a singular Hermitian metric $h_L$ such that its curvature current $\Theta_{h_L}(L)\geqslant 0$ and that $\scrJ(h_L)\simeq\scrO_X$. Suppose that there is an integer $m>0$ such that $L\ptensor[m]$ is a line bundle and that
\begin{equation}
\label{eq_cond_thm_CP_canonique:det-im-dir}
f_\ast\left(K_{X/Y}\ptensor[m]\otimes L\ptensor[m]\right)\neq 0.
\end{equation}
Such $m$ exists if and only if $\kappa(F,K_F+L|_F)\geqslant 0$. Suppose that there is a SNC divisor $\Sigma_Y$ containing $Y\backslash Y_0$ where $Y_0$ is the (analytic) Zariski open subset over which $f$ is smooth; suppose further that $f^\ast\Sigma_Y$ has SNC support (in other word, $f$ is prepared in the sense of \cite{Cam04}). Then there exists a constant $\epsilon_0>0$ and an $f$-exceptional effective $\QQ$-divisor $E$ such that the $\QQ$-line bundle 
\begin{equation}
\label{eq_canonique:det-im-dir}
K_{X/Y}+L+E-\epsilon_0f^\ast\det\!f_\ast\left( K_{X/Y}\ptensor[m]\otimes L\ptensor[m]\right)
\end{equation}
is pseudoeffective.
%\footnote{En effet, dès que la condition \eqref{eq_cond_thm_CP_canonique:det-im-dir} est vérifiée, on peut démontrer en plus que pour tout $m'$ rendant $L\ptensor[m']$ un fibré en droites il existe une constante constante $\epsilon'_0$ et un $\QQ$-diviseur effectif $f$-exceptionnel $E'$ tel que le $\QQ$-fibré en droites
%\[
%K_{X/Y}\otimes L\otimes\scrO_X(E')-\epsilon'_0 f^\ast\det\!f_\ast\left( K_{X/Y}\ptensor[m']\otimes L\ptensor[m']\right)
%\]
%soit pseudo-effectif.}.
\end{thm}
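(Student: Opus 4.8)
The plan is to mimic the strategy of \cite[Theorem 3.4]{CP17} in the Kähler setting, replacing the use of Viehweg's weak positivity by the positivity of the relative $m$-Bergman kernel metric ({\hyperref[thm_Cao_Bergman]{Theorem \ref*{thm_Cao_Bergman}}}) together with the Ohsawa--Takegoshi type extension {\hyperref[thm_Deng_OT]{Theorem \ref*{thm_Deng_OT}}}. First I would set up notation: write $\scrF_m:=f_\ast(K_{X/Y}\ptensor[m]\otimes L\ptensor[m])$, which is a torsion-free coherent sheaf on $Y$ of some rank $r>0$ by hypothesis \eqref{eq_cond_thm_CP_canonique:det-im-dir}; after shrinking $Y_0$ we may assume $\scrF_m|_{Y_0}$ is locally free and that $f$ is smooth over $Y_0$. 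The equivalence ``such $m$ exists iff $\kappa(F,K_F+L|_F)\geqslant 0$'' is the standard semicontinuity/generic-flatness argument: $\scrF_m\neq 0$ precisely when $\Coh^0(F,m(K_F+L|_F))\neq 0$ for general $F$.

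The heart of the argument is to produce a metric on $\det\scrF_m$ with enough positivity. The idea is this: equip $L\ptensor[m]$ with the metric $h_L\ptensor[m]$ (curvature $\geqslant 0$, and $\scrJ(h_L\ptensor[m])=\scrO_X$ since $\scrJ(h_L)=\scrO_X$), and consider on $K_{X/Y}\ptensor[m]\otimes L\ptensor[m]$ the twisted relative $m$-Bergman kernel metric, which by {\hyperref[thm_Cao_Bergman]{Theorem \ref*{thm_Cao_Bergman}}} has semipositive curvature current on $X$. Pushing forward, one obtains on $\scrF_m|_{Y_0}$ a (possibly singular) Hermitian metric — the fibrewise $L^2$/Bergman metric on $H^0(F, m(K_F+L|_F))$ — and the key point, exactly as in \cite{CP17} and as in Cao's work, is that the determinant of this metric on $\det\scrF_m|_{Y_0}$ has semipositive curvature, with controlled singularities along $\Sigma_Y$ governed by the SNC/prepared hypothesis on $f^\ast\Sigma_Y$. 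Here one invokes the logarithmic-differential structure: because $f$ is prepared, the local generators of $\scrF_m$ extend with at worst logarithmic poles along $\Sigma_Y$, so the metric extends across $\Sigma_Y$ with at worst logarithmic growth, i.e. the curvature current of the extended metric on $\det\scrF_m$ over all of $Y$ dominates $-C\,[\Sigma_Y]$ for some constant; absorbing this into an $f$-exceptional correction is where the divisor $E$ enters.

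Then I would run the fibrewise extension trick to upgrade ``semipositive curvature on $\det\scrF_m$'' to ``$K_{X/Y}+L+E-\epsilon_0 f^\ast\det\scrF_m$ is pseudoeffective'' for small $\epsilon_0$. Concretely: fix a Kähler class on $X$ and consider sections of $K_{X/Y}\ptensor[m]\otimes L\ptensor[m]\otimes f^\ast(\det\scrF_m)\ptensor[-1]\otimes f^\ast(\text{something ample})$; using {\hyperref[thm_Deng_OT]{Theorem \ref*{thm_Deng_OT}}} to extend fibrewise sections (the fibre $F$ carries sections of $m(K_F+L|_F)$ by hypothesis) one produces, after the usual limiting/normalization argument over a desingularized parameter space, a singular metric on $m(K_{X/Y}+L)$ whose curvature dominates $\epsilon_0 f^\ast\Theta(\det\scrF_m)$ minus a current supported on $f$-exceptional and $\Sigma_Y$-type loci; rescaling by $1/m$ and folding the error divisors into $E$ gives pseudoeffectivity of \eqref{eq_canonique:det-im-dir}. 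The constant $\epsilon_0$ comes from comparing the rank $r$ of $\scrF_m$ with the dimension of the fibre and the Seshadri-type constant of the auxiliary ample bundle, exactly as in the proof of {\hyperref[lemme_Viehweg_serie-lin]{Lemma \ref*{lemme_Viehweg_serie-lin}}}.

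The main obstacle I anticipate is the second step — establishing that the determinant of the fibrewise Bergman metric on $\scrF_m$ is genuinely semipositively curved and has only logarithmic singularities along $\Sigma_Y$ once $f$ is prepared. In the projective case this is \cite[Theorem 3.4]{CP17}, resting on Viehweg's weak positivity and Kawamata's covering constructions; in the Kähler case one must instead derive it analytically from the positivity of the relative $m$-Bergman kernel metric and a careful local study of how the $L^2$ metric degenerates over $\Sigma_Y$. Controlling this degeneration — in particular checking that the needed correction is \emph{$f$-exceptional} and not merely effective — is the delicate technical core; everything else (the existence/equivalence statement for $m$, the final pseudoeffectivity via Ohsawa--Takegoshi, the bookkeeping of $\epsilon_0$) is comparatively routine and parallels arguments already appearing earlier in the paper.
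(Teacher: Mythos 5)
There is a genuine gap, and it sits exactly at the heart of the statement: your proposal never supplies the mechanism that links $\det f_\ast\left(K_{X/Y}\ptensor[m]\otimes L\ptensor[m]\right)$ to $K_{X/Y}+L$ on $X$. Producing a semipositively curved metric on the determinant (your first step) is fine but by itself says nothing about the $\QQ$-line bundle \eqref{eq_canonique:det-im-dir}. Your second step, where you would extend fibrewise sections of bundles of the shape $K_{X/Y}\ptensor[m]\otimes L\ptensor[m]\otimes f^\ast\det\!f_\ast(\cdots)\inv\otimes f^\ast A$ by {\hyperref[thm_Deng_OT]{Theorem \ref*{thm_Deng_OT}}}, is circular: the Ohsawa--Takegoshi statement requires a semipositively curved singular metric on the twisting bundle, and here the twist contains $-f^\ast\det\!f_\ast(\cdots)$, whose relevant positivity (up to exceptional corrections) is precisely what is being proved. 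The paper resolves this by Viehweg's diagonal method: on the $r$-fold fibre product $X^r$ (with $r=\rank\scrF_{m,L}$) the inclusion $\det\scrF_{m,L}\hookrightarrow\bigotimes^r\scrF_{m,L}\simeq (f^r)_\ast(\omega_{X^r/Y}\ptensor[m]\otimes L_r\ptensor[m])$ furnishes a canonical section $s_0$ of $m(K_{X^r/Y}+L_r)-(f^r)^\ast\det\scrF_{m,L}$; a small multiple $\Delta_0=\epsilon\divisor(\bar s_0)$ is the klt twist that makes the extension theorem applicable on a resolution $X^{(r)}$, and restriction of the extended sections to the diagonal transports the conclusion back to $X$. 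Nothing in your outline plays the role of $s_0$, and without it (or some substitute) the "usual limiting/normalization argument over a desingularized parameter space" does not get off the ground.

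A second, independent problem is your treatment of the error terms. The divisor $E$ in the statement must be $f$-exceptional, but the degeneration along $\Sigma_Y$ produces error divisors supported on components of $f^\ast\Sigma_Y$ whose images are divisors in $Y$; these are vertical but \emph{not} $f$-exceptional and cannot be "folded into $E$" as you propose. In the paper this is exactly where the prepared/SNC hypothesis and the multiplicities enter: writing $f^\ast\Sigma_Y=\sum_i W_i+\sum_j a_jV_j$ with $a_j\geqslant 2$, the offending part $CkV_{\text{hor}}$ of the error is not absorbed into $E$ but compensated by {\hyperref[prop_Bergman-discriminant]{Proposition \ref*{prop_Bergman-discriminant}}}, which gives pseudoeffectivity of $K_{X/Y}+L-bV_{\text{hor}}$ with $b=\min_j\{a_j-1\}\geqslant 1$; this is also why the final constant is $\epsilon_0=\epsilon br/((1+\epsilon m)br+C)$ rather than something read off from a Seshadri constant. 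Your sketch neither isolates this non-exceptional part nor invokes any substitute for this compensation, so even granting your determinant-metric step the argument as written would not yield an $f$-exceptional $E$.
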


Before giving the proof, let us remark that:
\begin{rmq}
The condition \eqref{eq_cond_thm_CP_canonique:det-im-dir} concerning the positivity of the Kodaira dimension of the general fibre does not appear in the original statement of \cite[Theorem 3.14]{CP17}, but is indispensable. In fact, consider for example the case where $Y=\text{pt}$, $X$ is a smooth Fano variety (or more generally a smooth uniruled projective variety) with $\Delta=0$, $f$ is the structural morphism $X\to \text{pt}$ and $L=\scrO_X$; $f$ being a smooth morphism, there is no $f$-exceptional divisors, and the direct image (space of global sections) of $K_X\ptensor[m]$ is always $0$, then the $\QQ$-line bundle \eqref{eq_canonique:det-im-dir} is equal to $K_X$, which can never be pseudoeffective for $X$ Fano (or uniruled projective, by \cite{BDPP13}).
\end{rmq}

\begin{proof}[Proof of {\hyperref[thm_CP_canonique:det-im-dir]{Theorem \ref*{thm_CP_canonique:det-im-dir}}}]
The proof follows the same idea as that of \cite[Theorem 3.4]{CP17}; in fact, the algebraicity of $f$ (or equivalently, the algebraicity of $X$) is not essential in the original proof: it is only used in \cite{CP17} to apply the Ohsawa-Takegoshi extension theorem and \cite[III.5.10.Lemma, pp.~107-108]{Nak04}; as have been seen in {\hyperref[ss_pos-im-dir_OT]{\S \ref*{ss_pos-im-dir_OT}}} and {\hyperref[ss_preliminary_ref-hull]{\S \ref*{ss_preliminary_ref-hull}}} respectively, both of them can be generalized to the Kähler case. Nevertheless, the proof being highly technical, we will give more details for the convenience of the readers. Let us summarize the central idea of the proof as follows: from the natural inclusion of the determinant into the tensor product, we can construct, by the diagonal method of Viehweg, a non-zero section on $X^{(r)}$ (where $X^{(r)}$ denotes the resolution of some fibre product $X^r$ of $X$ over $Y$) of a line bundle of the form \eqref{eq_canonique:det-im-dir} (with $X$ replaced by $X^r$ and $\epsilon_0=1$); and then we "restrict" this section to the diagonal so that we get a section of the line bundle \eqref{eq_canonique:det-im-dir} on $X$. However one cannot deduce the effectivity of the line bundle \eqref{eq_canonique:det-im-dir}, since the section constructed as above can vanish along the diagonal. To overcome this difficulty, we have to take a twisted approach: at the cost of tensoring by an ample divisor on $Y$, we can use the Ohsawa-Takegoshi extension {\hyperref[thm_Deng_OT]{Theorem \ref*{thm_Deng_OT}}} to extend pluricanonical forms on the general fibre $F$ (by virtue of the condition \eqref{eq_cond_thm_CP_canonique:det-im-dir}) to sections of the line bundle of the form \eqref{eq_canonique:det-im-dir} on $X^{(r)}$, then one can restrict them to the diagonal and get non-zero sections. However, these sections usually have poles, due to the singularities of $f$; in order to get rid of them, one has to carefully analyse these singularities (this analysis takes up a technical part of the proof), then it turns out that one can use {\hyperref[prop_Bergman-discriminant]{Proposition \ref*{prop_Bergman-discriminant}}} to control these poles. Finally one use an approximation argument to conclude the pseudoeffectivity of the line bundle \eqref{eq_canonique:det-im-dir}. The proof of the theorem proceeds in six steps: 
  
\paragraph{\quad (A) Analysis of singular fibres of $f$.\\}
\label{thm_CP_canonique:det-im-dir_demo-A}
In this step, we will use a standard argument to show that the (analytic Zariski) open subset of $y\in Y$ such that $X_y$ is Gorenstein is of codimension $\geqslant 2$ (whilst the generic smoothness only ensure this to be analytic Zariski open). To this end, note 
\[
Y_{\ff}:=Y_{\text{flat}}\cap Y_{\scrF_{m,L}}
\]
the (analytic) Zariski open subset over which $f$ is flat and $\scrF_{m,L}:=f_\ast\left( K_{X/Y}\ptensor[m]\otimes L\ptensor[m]\right)$ is locally free; and denote $X_{\ff}:=f\inv Y_{\ff}$\,.  since $X$ and $Y$ are reduced, $\codim_Y(Y\backslash Y_{\text{f}})\geqslant 2$ (c.f. \cite[Corollary 5.5.15, p.~147]{Kob87} and \cite[Example A.5.4, p.~416]{Ful84}). By \cite[Theorem 23.4, p.~181]{Mat89}, for every $y\in Y_{\ff}$, the fibre $X_y$ is Gorenstein.

\paragraph{\quad(B) Construction of the fibre product $X^r$ and the canonical section.\\}
\label{thm_CP_canonique:det-im-dir_demo-B}
Over $Y_{\ff}$ one has a natural morphism (injection of vector bundles)
\begin{equation}
\label{eq_im-dir--det-tensor}
\det\!f_\ast\left( K_{X/Y}\ptensor[m]\otimes L\ptensor[m]\right)\hookrightarrow\bigotimes^r f_\ast\left( K_{X/Y}\ptensor[m]\otimes L\ptensor[m]\right),
\end{equation}
where $r:=\rank\scrF_{m,L}$\,, which gives rise to a non-trivial section of 
\begin{equation}
\label{eq_ptensor-im-dir:det}
\left(\bigotimes^r f_\ast\left( K_{X/Y}\ptensor[m]\otimes L\ptensor[m]\right)\right)\otimes\left(\det\!f_\ast\left( K_{X/Y}\ptensor[m]\otimes L\ptensor[m]\right)\right)\inv.
\end{equation}
over $Y_{\ff}$. In order to get a section of a line bundle of the form \eqref{eq_canonique:det-im-dir}, we will apply the diagonal method of Viehweg (c.f. for example \cite[\S 6.5, pp.~192-196]{Vie95}). Let 
\[
X^r:=\underbrace{X\underset{Y}{\times}X\underset{Y}{\times}\cdots\underset{Y}{\times}X}_{r\text{ times}}
\]
be the $r$-fold fibre product of $X$ over $Y$, equipped with a morphism (a Kähler fibration) $f^r:X^r\to Y$ as well as the natural projections $\pr_i:X^r\to X$ to the $i$-th factor. Denote $X_{\ff}^r:=(f^r)\inv Y_{\ff}$, then $f^r|_{X_{\ff}}$ is plat; moreover, since $Y$ and $X_y^r=X_y\times\cdots\times X_y$ are Cohen-Macaulay for every $y\in Y_{\ff}$, $X^r_{\ff}$ is also Cohen-Macaulay (by \cite[(21.C) Corollary 2, p.~154]{Mat70}). By the base change formula for relative canonical sheaves 
%(c.f. \cite[\S I, Proposition (9), p.~47]{Kle80}) 
we see that $X^r_{\ff}$ is Gorenstein and
\begin{equation}
\label{eq_caconique-rel-prod-fib}
\omega_{X^r}\otimes f^{r\ast}K_Y\inv=\omega_{X^r/Y}\simeq\bigotimes_{i=1}^r\pr_i^\ast K_{X/Y}
%\quad\text{sur }X^r_{\ff}.
\end{equation}
Note
\[
L_r:=\bigotimes^r_{i=1}\pr_i^\ast L\,,
\]
then by an induction argument, the projection formula together with the base change formula imply that (c.f. \cite[Lemma 3.15]{Hor10})
\[
\bigotimes^r f_\ast\left( K_{X/Y}\ptensor[m]\otimes L\ptensor[m]\right)\simeq (f^r)_\ast\left(\omega_{X^r/Y}\ptensor[m]\otimes L_r\ptensor[m]\right)\quad\text{over }Y_{\ff}.
\]
In consequence, the morphism \eqref{eq_im-dir--det-tensor} gives rise to a non-zero section
\begin{align}
s_0 &\in \Coh^0(X^r_{\ff},\omega_{X^r/Y}\ptensor[m]\otimes L_r\ptensor[m]\otimes(f^r)^\ast\left(\det\!f_\ast\left( K_{X/Y}\ptensor[m]\otimes L\ptensor[m]\right)\right)\inv) \nonumber \\
&=\Coh^0(Y_{\ff},\left(\bigotimes^r f_\ast\left( K_{X/Y}\ptensor[m]\otimes L\ptensor[m]\right)\right)\otimes\left(\det\!f_\ast\left( K_{X/Y}\ptensor[m]\otimes L\ptensor[m]\right)\right)\inv). \label{eq_sec-canonique:det-im-dir--ff}
\end{align}
%De plus, la section peut se voir comme une section de l'image directe 
%\[
%(f^{(r)})_\ast
%\]
%sur $Y_{\ff}$.

\paragraph{\quad (C) Analysis of the singularities of $X^r$.\\}
\label{thm_CP_canonique:det-im-dir_demo-C}
Take a desingularization $\mu:X^{(r)}\to X^r$ which is an isomorphism over the smooth locus of $X^r$. Note $f^{(r)}:=f^r\circ\mu$ and $X_{\ff}^{(r)}:=\mu\inv X_{\ff}^r$. The natural morphism
\begin{equation}
\label{eq_canonique-resolution}
\mu_\ast K_{X^{(r)}}\to\omega_{X^r}\,,
\end{equation}
which is an isomorphism over $X^r_{\rat}$ where $X^r$ denotes the (analytic Zariski) open subset of point with rational singularities on $X^r$, gives rise to a meromorphic section of the line bundle (by virtue of \eqref{eq_caconique-rel-prod-fib})
\[
K_{X^{(r)}/Y}\inv\otimes\mu^\ast\left(\bigotimes_{i=1}^r\pr_i^\ast K_{X/Y}\right),
\]
whose zeros and poles are contained in $X^{(r)}\backslash \mu\inv X^r_{\rat}$. In consequence, there are two effective divisors  $D_1$ and $D_2$ over $X^{(r)}$ such that $\Supp(D_1),\Supp(D_2)\subseteq X^{(r)}\backslash\mu\inv X^r_{\rat}$ and that 
\begin{equation}
\label{eq_canonique-desing:pull-back}
K_{X^{(r)}/Y}\otimes\scrO_{X^{(r)}}(D_1)=\mu^\ast\left(\bigotimes_{i=1}^r\pr_i K_{X/Y}\right)\otimes\scrO_{X^{(r)}}(D_2).
\end{equation}

%Maintenant analysons les singularités de $X^r$. Comme on a déjà vu dans {\hyperref[thm_CP_canonique:det-im-dir_demo-B]{(B)}}, $X^r_{\ff}$ est Gorenstein. 
Now let us further analyse the rational singularities locus $X^r_{\rat}$ by virtue of our hypothesis on $\Sigma_Y$ and $f^\ast \Sigma_Y$. Write
\begin{equation}
\label{eq_pull-back-SigmaY}
f^\ast\Sigma_Y=\sum_i W_i+\sum_j a_j V_j
\end{equation}
with the $W_i$'s and $V_j$'s prime divisors over $X$ and $a_i\geqslant 2$; by hypothesis, 
\[
W:=\sum_i W_i\quad\text{ et }\quad V:=\sum_j V_j
\]
are (reduced) SNC divisors. As is explained in {\hyperref[rmq_lemme_cov-trick]{Remark \ref*{rmq_lemme_cov-trick}}}, the fibre product 
\[
(X_{\ff}\backslash (V\cup f\inv\Sing(\Sigma_Y)))^r:=\underbrace{(X_{\ff}\backslash (V\cup f\inv\Sing(\Sigma_Y)))\underset{Y_{\ff}\backslash\Sing(\Sigma_Y)}{\times}\cdots\underset{Y_{\ff}\backslash\Sing(\Sigma_Y)}{\times}(X_{\ff}\backslash(V\cup f\inv\Sing(\Sigma_Y)))}_{r\text{ times}}
\]
%$X^r_{\ff}$ est normal et à singularités rationnelles sur tout point général (en dehors de l'image réciproque par $f$ des points singuliers de $\Sigma_Y$) de
%\[
%W_{\ff}^r:=W\underset{Y_{\ff}}{\times}W\underset{Y_{\ff}}{\times}\cdots\underset{Y_{\ff}}{\times}W,
%\]
%donc tout point général de $W_{\ff}^r$ est contenu dans $X_{\rat}^r$. 
is contained in $X^r_{\rat}$\,.

In consequence, both $D_1$ and $D_2$ are contained in the set $\scrD$ where $\scrD$ denotes the set of divisors $D$ on $X^{(r)}$ such that every component $\Gamma$ of $D$ satisfies (at least) one of the following three conditions:
\begin{itemize}
\item[($\scrD$1)]\label{eq_defn-ens-scrD_1}
$f^{(r)}(\Gamma)\subseteq Y\backslash Y_{\ff}$ (in particular, $\Gamma$ is $f^{(r)}$-exceptional);
\item[($\scrD$2)]\label{eq_defn-ens-scrD_2} $\Gamma$ is $\pr_i\circ\mu$-exceptional for some $i$;
\item[($\scrD$3)]\label{eq_defn-ens-scrD_3} $\pr_i\circ\mu(\Gamma)=V_j$ for some $i$ and $j$.
\end{itemize}
%Cela nous fournit un contrôle sur la positivité (surtout sa partie non-exceptionnelle) à ajouter pour rendre effectif le $\QQ$-fibré en droites \eqref{eq_canonique:det-im-dir}, dont la partie non-exceptionnelle, comme l'on verra, peut être contrôlée à l'aide de la proposition \ref{prop_Bergman-discriminant}.

\paragraph{\quad (D) Extension of pluricanonical forms on $X^{(r)}_y$ by Ohsawa-Takegoshi.\\}
\label{thm_CP_canonique:det-im-dir_demo-D}
The section $s_0$ (c.f. \eqref{eq_sec-canonique:det-im-dir--ff}) gives rise the section
\[
\mu^\ast s_0\in\Coh^0(X^{(r)}_{\ff}, K_{X^{(r)}/Y}\ptensor[m]\otimes\mu^\ast L_r\ptensor[m]\otimes\scrO_{X^{r)}}(mD_1)\otimes f^{(r)\ast}\left(\det\!f_\ast\left( K_{X/Y}\ptensor[m]\otimes L\ptensor[m]\right)\right)\inv).
\]
Since $\codim_Y Y_{\ff}\geqslant 2$, the section $\mu^\ast s_0$, regarded as a section of the torsion free sheaf \eqref{eq_ptensor-im-dir:det} over $Y_{\ff}$, extends to a global section $\bar s_0$ of the reflexive hull
\begin{align*}
&\quad \left[\left(\bigotimes^r f_\ast\left( K_{X/Y}\ptensor[m]\otimes L\ptensor[m]\right)\right)\otimes\left(\det\!f_\ast\left( K_{X/Y}\ptensor[m]\otimes L\ptensor[m]\right)\right)\inv\right]^\wedge \\
&= \left[f^{(r)}_\ast\left( K_{X^{(r)}/Y}\ptensor[m]\otimes\mu^\ast L_r\ptensor[m]\otimes\scrO_{X^{r)}}(mD_1)\otimes f^{(r)\ast}\left(\det\!f_\ast\left( K_{X/Y}\ptensor[m]\otimes L\ptensor[m]\right)\right)\inv\right)\right]^\wedge. 
\end{align*}
By {\hyperref[thm_env-ref]{Theorem \ref*{thm_env-ref}}}, there is an $f^{(r)}$-exceptional effective divisor $D_3$ such that 

\begin{align*}
&\quad \left[f^{(r)}_\ast\left(K_{X^{(r)}/Y}\ptensor[m]\otimes\mu^\ast L_r\ptensor[m]\otimes\scrO_{X^{r)}}(mD_1)\otimes f^{(r)\ast}\left(\det\!f_\ast\left( K_{X/Y}\ptensor[m]\otimes L\ptensor[m]\right)\right)\inv\right)\right]^\wedge \\
&= \left[f^{(r)}_\ast\left( K_{X^{(r)}/Y}\ptensor[m]\otimes\mu^\ast L_r\ptensor[m]\otimes\scrO_{X^{r)}}(mD_1)\right)\right]^\wedge\otimes f^{(r)\ast}\left(\det\!f_\ast\left( K_{X/Y}\ptensor[m]\otimes L\ptensor[m]\right)\right)\inv \\
&= f^{(r)}_\ast\left( K_{X^{(r)}/Y}\ptensor[m]\otimes\mu^\ast L_r\ptensor[m]\otimes\scrO_{X^{r)}}(mD_1+D_3)\right)\otimes f^{(r)\ast}\left(\det\!f_\ast\left( K_{X/Y}\ptensor[m]\otimes L\ptensor[m]\right)\right)\inv \\
&= f^{(r)}_\ast\left( K_{X^{(r)}/Y}\ptensor[m]\otimes\mu^\ast L_r\ptensor[m]\otimes\scrO_{X^{r)}}(mD_1+D_3)\otimes f^{(r)\ast}\left(\det\!f_\ast\left( K_{X/Y}\ptensor[m]\otimes L\ptensor[m]\right)\right)\inv\right)\,,
\end{align*}
hence $\bar s_0$ can be regarded as a (global) section of the line bundle
\[
K_{X^{(r)}/Y}\ptensor[m]\otimes\mu^\ast L_r\ptensor[m]\otimes\scrO_{X^{r)}}(mD_1+D_3)\otimes f^{(r)\ast}\det\!f_\ast\left( K_{X/Y}\ptensor[m]\otimes L\ptensor[m]\right)\inv\,.
\]
Moreover, since the torsion free sheaf \eqref{eq_ptensor-im-dir:det} is locally free on $Y_{\ff}$\,, hence
\[
f^{(r)}\left(\Supp(D_3)\right)\subseteq Y\backslash Y_{\ff}\,,
\]
in particular, $D_3\in\scrD$. Now choose $\epsilon\in\QQ_{>0}$ small enough such that $\Delta_0:=\epsilon\divisor(\bar s_0)$ is klt on $X^{(r)}$.The $\QQ$-line bundle $\scrO_{X^{(r)}}(\Delta_0)$ is equipped with a canonical singular Hermitian metric $h_{\Delta_0}$ whose local weight is given by
\[
\phi_{\Delta_0}=\frac{\epsilon}{2}\log|g_{\bar s_0}|^2,
\]
where $g_{\bar s_0}$ denotes a local equation of $\divisor(\bar s_0)$. Denote $L_0:=\mu^\ast L_r\otimes\scrO_{X^{(r)}}(\Delta_0)$, this $\QQ$-line bundle is equipped with the singular Hermitian metric
\[
h_{L_0}:=h_{\Delta_0}\otimes \bigotimes_{i=1}^r \mu^\ast\pr_i^\ast h_L\,.
\]
whose curvature current is positive. Since $\scrJ(h_{L_0})=\scrO_{X^{(r)}}$, by the same argument as in the point {\hyperref[prop_metrique_im-dir_demo-b]{(b)}} of the proof of {\hyperref[prop_metrique_im-dir]{Proposition \ref*{prop_metrique_im-dir}}}, 
%(up to choosing a smaller $\epsilon$) 
we have that $\scrJ(h_{L_0}|_{X^{(r)}_y})=\scrO_{\!\!X^{(r)}_y}$ for $y\in Y_0$ almost everywhere (since $\mu$ is supposed to be an isomorphism over $Y_0$\,, we have $X^{(r)}_y\simeq X_y\times\cdots\times X_y$ for $y\in Y_0$, c.f. Step {\hyperref[thm_CP_canonique:det-im-dir_demo-E1]{(E1)}} below). 

Let $A_Y$ be an ample line bundle over $Y$ such that the line bundle $A_Y\otimes K_Y\inv$ is ample and that the Seshadri constant $\epsilon(A_Y\otimes K_Y\inv, y)>d:=\dim Y$ for every $y\in Y_0$. Claim that the restriction map 
\begin{equation}
\label{eq_restriction_prod-fib}
\Coh^0(X^{(r)}, K_{X^{(r)}/Y}\ptensor[k]\otimes L_0\ptensor[k]\otimes f^{(r)\ast} A_Y)\longrightarrow\Coh^0(X^{(r)}_y, K_{\!\!X^{(r)}_y}\ptensor[k]\otimes L_0\ptensor[k]|_{X^{(r)}_y})
\end{equation}
is surjective for any $k$ sufficiently large and divisible and for every $y\in Y_0$ such that $\scrJ(h_{L_0}|_{X^{(r)}_y})=\scrO_{\!\!X^{(r)}_y}$\,. In fact, $\Delta_0$ being effective, the hypothesis \eqref{eq_cond_thm_CP_canonique:det-im-dir} implies that 
\begin{align*}
f^{(r)}_\ast\left( K_{X^{(r)}/Y}\ptensor[k]\otimes L_0\ptensor[k]\right)
&= f^{(r)}_\ast\left( K_{X^{(r)}/Y}\ptensor[k]\otimes \mu^\ast L_r\ptensor[k]\otimes\scrO_{X^{(r)}}(k\Delta_0)\right) \\ 
&\supseteq f^{(r)}_\ast\left( K_{X^{(r)}/Y}\ptensor[k]\otimes\mu^\ast L_r\ptensor[k]\right)\neq 0
\end{align*}
for $k$ sufficiently large and divisible (e.g. such that $\epsilon k\in\ZZ_{>0}$ and $k$ divisible by $m$). Moreover, since $h_{L_0}$ is klt and $\Theta_{h_{L_0}}(L_0)\geqslant 0$, {\hyperref[thm_Cao_Bergman]{Theorem \ref{thm_Cao_Bergman}}} implies that the $k$-Bergman kernel metric $h^{(k)}_{X^{(r)}/Y,kL_0}$ is semi-positively curved (by virtue of {\hyperref[lemme_Fubini_multiplier-ideal]{Lemma \ref*{lemme_Fubini_multiplier-ideal}}}). Set $M_k:= K_{X^{(r)}/Y}\ptensor[(k-1)]\otimes L_0\ptensor[k]$, equipped with a singular Hermitian metric
\[h_{M_k}:=\left(h_{X^{(r)}/Y,kL_0}^{(k)}\right)^{\frac{k-1}{k}}\otimes h_{L_0}\]
whose curvature current is positive. Then by {\hyperref[lemme_gen-iso_im-pluri-can]{Lemma \ref*{lemme_gen-iso_im-pluri-can}}} and {\hyperref[rmq_klt-gen-iso]{Remark \ref*{rmq_klt-gen-iso}}} one has 
\begin{equation}
\label{eq_klt-metrique-M_k}
\Coh^0(X^{(r)}_y,K_{X^{(r)}_y}\ptensor[k]\otimes L_0\ptensor[k]|_{X^{(r)}}\otimes\scrJ(h_{M_k}|_{X^{(r)}_y}))=\Coh^0(X^{(r)}_y,K_{X^{(r)}_y}\ptensor[k]\otimes L_0\ptensor[k]|_{X^{(r)}})
\end{equation}
for general $y\in Y$.
%In fact, since the $k$-Bergman kernel metric is locally upper bounded, one has in general an inclusion $\scrJ(h_{M_k}|_{X^{(r)}_y})\subseteq\scrJ(h_{L_0}|_{X^{(r)}_y})$\,; on the other hand, if $\scrJ(h_{L_0}|_{X^{(r)}_y})=\scrO_{X^{(r)}_y}$, then by the same argument as in the proof of {\hyperref[lemme_gen-iso_im-pluri-can]{Lemma \ref*{lemme_gen-iso_im-pluri-can}}} (with $\Delta$ replaced by $L_0$, $k$ replaced by $k$ and $h_{m-1}$ by $h_{M_k}$) where we prove that the inclusion \eqref{eq_incl-mult-ideal} is an equality, we can prove the equality \eqref{eq_klt-metrique-M_k}.  
Hence we can apply {\hyperref[thm_Deng_OT]{Theorem \ref*{thm_Deng_OT}}} to 
\[
K_{X^{(r)}}\otimes M_k\otimes f^{(r)\ast}(A_Y\otimes K_Y\inv)=  K_{X^{(r)}/Y}\ptensor[k]\otimes L_0\ptensor[k]\otimes f^{(r)\ast}A_Y
\] 
to obtain the surjectivity of the restriction morphism  \eqref{eq_restriction_prod-fib}
%\[
%\Coh^0(X^{(r)}, K_{X^{(r)}/Y}\ptensor[k]\otimes L_0\ptensor[k]\otimes f^{(r)\ast} A_Y)\twoheadrightarrow\Coh^0(X^{(r)}_y, K_{\!\!X^{(r)}_y}\ptensor[k]\otimes L_0\ptensor[k]|_{X^{(r)}_y}).
%\]
for general $y\in Y_0$. Moreover, set $H_k:=A_Y\otimes\det\!f_\ast\left( K_{X/Y}\ptensor[m]\otimes L\ptensor[m]\right)\ptensor[-\epsilon k]$, then we can rewrite \eqref{eq_restriction_prod-fib} as 
\begin{align}
&\Coh^0(X^{(r)}, \left( K_{X^{(r)}/Y}\otimes\mu^\ast L_r\right)\ptensor[\,(1+\epsilon m)k]\otimes\scrO_{X^{(r)}}(\epsilon kmD_1+\epsilon kD_3)\otimes f^{(r)\ast}H_k) \nonumber \\
&\xtwoheadrightarrow{\text{restriction}} \Coh^0(X^{(r)}_y,\left( K_{\!\!X^{(r)}_y}\otimes \mu^\ast L_r|_{X^{(r)}_y}\right)\ptensor[\,(1+\epsilon m)k])\label{eq_restriction_prod-fib_2}
\end{align}
for general $y\in Y_0$ and for $k$ sufficiently large and divisible.

\paragraph{\quad (E) Extension of pluricanonical forms on $X_y$ via restriction to the diagonal.\\}
\label{thm_CP_canonique:det-im-dir_demo-E}
For general $y\in Y_0$ take a section
\[
u\in\Coh^0(X_y,\left( K_{X_y}\otimes L|_{X_y}\right)\ptensor[\,(1+\epsilon m)k])
\]
with $k$ sufficiently large and divisible, we will construct a section $s$ in 
\[
\Coh^0(X, \left( K_{X/Y}\otimes L\right)\ptensor[\,(1+\epsilon m)kr]\otimes\scrO_X(CkV+kE_0)\otimes f^\ast H_k\ptensor[r])\,,
\]
for $C>0$ a constant and $E_0$ an $f$-exceptional effective divisor, both independent of $k$, such that $s|_{X_y}=u\ptensor[r]$\,. 

\subparagraph{\quad (E1) Extending the section $u$ to a section over $X^{(r)}$ by Step {\hyperref[thm_CP_canonique:det-im-dir_demo-D]{(D)}} \\}
\label{thm_CP_canonique:det-im-dir_demo-E1}
Note
\[
X_0^r:=X_0\underset{Y_0}{\times}X_0\underset{Y_0}{\times}\cdots\underset{Y_0}{\times}X_0\,\subseteq X^r\,,
\]
then $X_0^r$ is smooth, hence $\mu\inv X^0_r\xrightarrow{\overset{\mu}{\sim}}X^r_0$ is an isomorphism\,. In particular, we have 
\begin{equation}
\label{eq_iso-mu-Y0}
X_y^{(r)}\xrightarrow{\underset{\sim}{\mu}}X_y^r=\underbrace{X_y\times X_y\times\cdots\times X_y}_{r\text{ times}}\,.
\end{equation}
Hence $u$ gives rise to a section 
\begin{equation}
\label{eq_section-u(r)}
u^{(r)}:=\mu^\ast\left(\bigotimes_{i=1}^r\pr_i^\ast u\right) \in\;\Coh^0(X^{(r)}_y, \left( K_{\!\!X^{(r)}_y}\otimes \mu^\ast L_r|_{X^{(r)}_y}\right)\ptensor[\,(1+\epsilon m)k])\,,
\end{equation}
such that the restriction of $u^{(r)}$ to the diagonal is equal to $u\ptensor[r]$. Using the surjection \eqref{eq_restriction_prod-fib_2} we obtain a section $\sigma^{(r)}$ of the line bundle
\begin{equation}
\label{eq_fed_desing-prod-fib}
\left( K_{X^{(r)}/Y}\otimes\mu^\ast L_r\right)\ptensor[\,(1+\epsilon m)k]\otimes\scrO_{X^{(r)}}(\epsilon kmD_1+\epsilon kD_3)\otimes f^{(r)\ast}H_k\,,
\end{equation}
such that $\sigma^{(r)}|_{X^{(r)}_y}=u^{(r)}$. 

\subparagraph{\quad (E2) Restricting the section $\sigma^{(r)}|_{\mu\inv X^r_0}$ to the diagonal \\}
\label{thm_CP_canonique:det-im-dir_demo-E2}
In order to restrict $\sigma^{(r)}|_{\mu\inv X^r_0}$ to the diagonal, use \eqref{eq_canonique-desing:pull-back} to rewrite the line bundle \eqref{eq_fed_desing-prod-fib} as follows:
\begin{align}
&\quad\left( K_{X^{(r)}/Y}\otimes\mu^\ast L_r\right)\ptensor[\,(1+\epsilon m)k]\otimes\scrO_{X^{(r)}}(\epsilon kmD_1+\epsilon kD_3)\otimes f^{(r)\ast}H_k \nonumber \\
&=\mu^\ast\left(\bigotimes_{i=1}^r\pr_i^\ast( K_{X/Y}\otimes L)\right)\ptensor[\,(1+\epsilon m)k]\otimes\scrO_{X^{(r)}}(-kD_1+(1+\epsilon m)kD_2+\epsilon kD_3)\otimes f^{(r)\ast}H_k\,. \label{eq_canonique-desing:pull-back_2}
\end{align}
In consequence, $\sigma^{(r)}$ can be regarded as a meromorphic section  of the line bundle
\begin{equation}
\label{eq_fed-prod-canonique-rel}
\mu^\ast\left(\bigotimes_{i=1}^r\pr_i^\ast( K_{X/Y}\otimes L)\right)\ptensor[\,(1+\epsilon m)k]\otimes f^{(r)\ast}H_k
\end{equation}
whose poles are contained $\Supp(D_2)\cup\Supp(D_3)$. Locally, by choosing a trivialization of the line bundle \eqref{eq_fed-prod-canonique-rel}, the section $\sigma^{(r)}$ can be written as a meromorphic function $F^{(r)}$ such that
\begin{equation}
\label{eq_local-section-s'}
g_{D_1}^{-k}g_{D_2}^{(1+\epsilon m)k}g_{D_3}^{\epsilon k}\cdot F^{(r)}
\end{equation}
is holomorphic, where $g_{D_l}$ is a local equation of the divisor $D_l$ ($l=1,2,3$). 

By construction, $D_1,D_2,D_3\in\scrD$ (in particular, $D_3$ is $f^{(r)}$-exceptional), hence there exist constants $C_1$ et $C_2$ such that 
\begin{equation}
\label{eq_comparaison-D12-V}
D_l\leqslant C_l\cdot\mu^\ast\sum_{i=1}^r\pr_i^\ast V,\quad\text{pour }l=1,2
\end{equation}
over $X^{(r)}_{\ff}\backslash S$ where $S\subseteq X^{(r)}$ denotes the union of the components in $D_1+D_2$ which are $\pr_i\circ\mu$-exceptional for every $i=1,\cdots,r$. By Step {\hyperref[thm_CP_canonique:det-im-dir_demo-D]{(D)}}) we have 
\[f^{(r)}\left(\Supp(D_3)\right)\subseteq Y\backslash Y_{\ff}\,,
\]
hence locally over $X^{(r)}_{\ff}\backslash S$ the meromorphic function
\[
F^{(r)}\cdot\prod_{i=1}^r\big((\pr_i\circ\mu)^\ast g_{V}\big)^{C_2(1+\epsilon m)k} =F^{(r)}\cdot\prod_{i=1}^r(\pr_i\circ\mu)^\ast\left(g_V^{C_2(1+\epsilon m)k}\right)
\]
is holomorphic where $g_V=\prod_j g_{V_j}$ is a local equation of $V$.

Note $\delta_{X,r}:X\to X^r$ the inclusion of the diagonal. Then $\pr_i\circ\delta_{X,r}=\id_X$ for $\forall i=1,\cdots, r$. Since the $D_l$'s ($l=1,2,3$) are disjoint to 
\[
\mu\inv X^r_{\rat}\supseteq\mu\inv X^r_0\supseteq \mu\inv(\delta_{X,r}(X_0))\,,
\]
then locally the meromorphic function $F^{(r)}$ is holomorphic over $\mu\inv X^r_0$\,. Therefore we can restrict $\sigma^{(r)}|_{\mu\inv X^r_0}$ to the diagonal and obtain a section 
\[
s_1:=(\mu|_{X^r_0}\inv\circ\delta_{X,r}|_{X_0})^\ast \left(\sigma^{(r)}|_{\mu\inv X^r_0}\right)
\]
over $X_0$ of the line bundle
\begin{equation}
\label{eq_diagonale_canonique+pull-back}
( K_{X/Y}\otimes L)\ptensor[\,(1+\epsilon m)kr]\otimes f^\ast H_k\ptensor[r]\,.
\end{equation}
Locally over an open subset of $X_0$ trivializing the line bundle \eqref{eq_diagonale_canonique+pull-back} the section $s_1$ is given by a holomorphic function 
\[
F_1:=(\mu|_{X^r_0}\inv\circ\delta_{X,r}|_{X_0})^\ast \left(F^{(r)}|_{\mu\inv X^r_0}\right)\,.
\]

\subparagraph{\quad (E3) Extending the section $s_1$ across the singular fibres of $X$ \\}
\label{thm_CP_canonique:det-im-dir_demo-E3}
In order to extend $s_1$ across $f\inv\Sigma_Y$, one needs to know its behaviour around the $W_i$'s and the $V_j$'s; this can be done by analysing the poles along the $D_l$'s of $\sigma^{(r)}$, regarded as a meromorphic section of the line bundle \eqref{eq_fed-prod-canonique-rel}, as we explain in the sequel:

\begin{description}
\item[(E3-i)]\label{thm_CP_canonique:det-im-dir_demo-E3i}
By Step {\hyperref[thm_CP_canonique:det-im-dir_demo-C]{(C)}} $(X_{\ff}\backslash(V\cup\Sing(W)))^r$ is contained in $X^r_{\rat}$, thus disjoint to the $D_l$'s ($l=1,2,3$); regarding $F_1$ as a holomorphic function on $\delta_{X,r}(X_0)$), one has
\[
\mu^\ast F_1=F^{(r)}|_{\mu\inv(\delta_{X,r}(X_0))},
\]
but the poles of $F^{(r)}$ are contained in  $\Supp(D_2)\cup\Supp(D_3)$, hence the function $F_1$ is bounded around the $X_{\ff}\backslash(V\cup f\inv\Sing(\Sigma_Y))$, and thus $F_1$ can be extended to $X_{\ff}\backslash(V\cup f\inv\Sing(\Sigma_Y))$ by Riemann extension; moreover, by Hartogs extension, $F_1$ extends to a holomorphic function over $X_{\ff}\backslash V$.

\item[(E3-i\!i)]\label{thm_CP_canonique:det-im-dir_demo-E3ii} 
In general, $F_1$ is not bounded around $V$. Nevertheless, by Step {\hyperref[thm_CP_canonique:det-im-dir_demo-E2]{(E2)}} the meromorphic function
\[
F^{(r)}\cdot\mu^\ast\prod_{i=1}^r\pr_i^\ast\left(g_V^{C_2(1+\epsilon m)k}\right)
\]
is holomorphic over $X^r_{\ff}\backslash S$ with the restriction of $S$ to the diagonal is an analytic subset of codimension $\geqslant 2$ (c.f. {\hyperref[thm_CP_canonique:det-im-dir_demo-E2]{(E2)}} for the definition of $S$), hence the function
\[
F_1\cdot g_V^{C_2(1+\epsilon m)kr}
\]
is bounded around a general point of $V\cap X_{\ff}$. By Riemann extension (as well as Hartogs extension) $F_1$ extends across $V\cap X_{\ff}$ as a holomorphic local section of the line bundle
\[
( K_{X/Y}\otimes L)\ptensor[\,(1+\epsilon m)kr]\otimes \scrO_X(CkV)\otimes f^\ast H_k\ptensor[r],
\]
where $C:=C_2(1+\epsilon m)r$ is a constant independent of $k$. Combining this with {\hyperref[thm_CP_canonique:det-im-dir_demo-E3i]{(E3-i)}} we obtain an extension of $s_1$ to a section over $X_{\ff}$:
\[
\bar s_1\in\Coh^0(X_{\ff},( K_{X/Y}\otimes L)\ptensor[\,(1+\epsilon m)kr]\otimes \scrO_X(CkV)\otimes f^\ast H_k\ptensor[r])\,.
\]

\item[(E3-i\!i\!i)]\label{thm_CP_canonique:det-im-dir_demo-E3iii} 
At last, we will extend $\bar s_1$ to a global section, which provides the section $s$ that we search for. In fact, $\bar s_1$ can be regarded as a section  of the direct image sheaf
\begin{equation}
\label{eq_diagonale_canonique+pull-back+V}
f_\ast\left(( K_{X/Y}\otimes L)\ptensor[\,(1+\epsilon m)kr]\otimes\scrO_X(CkV)\otimes f^\ast H_k\ptensor[r]\right);
\end{equation}
but $\codim_Y(Y_{\ff})\geqslant 2$, hence $\bar s_1$ extends to a section $s$ of the (torsion free) sheaf \eqref{eq_diagonale_canonique+pull-back+V}. By {\hyperref[thm_env-ref]{Theorem \ref*{thm_env-ref}}}, there is an $f$-exceptional effective divisor $E_0$\,, independent of $k$, such that
\begin{align*}
&\quad f_\ast\left(( K_{X/Y}\otimes L)\ptensor[\,(1+\epsilon m)kr]\otimes \scrO_X(CkV)\otimes f^\ast H_k\ptensor[r]\right)^\wedge \\
&=f_\ast\left(( K_{X/Y}\otimes L)\ptensor[\,(1+\epsilon m)kr]\otimes \scrO_X(CkV+kE_0)\otimes f^\ast H_k\ptensor[r]\right)\,,
\end{align*}
hence
\[
s\in\Coh^0(X,( K_{X/Y}\otimes L)\ptensor[\,(1+\epsilon m)kr]\otimes \scrO_X(CkV+kE_0)\otimes f^\ast H_k\ptensor[r]).
\]
Moreover, by \eqref{eq_iso-mu-Y0} as well as the construction of the section $u^{(r)}$ (c.f. \eqref{eq_section-u(r)})we have
\[
s|_{X_y}=s_1|_{X_y}=(\delta\circ\mu)^\ast u^{(r)}=u\ptensor[r].
\]
This finishes {\hyperref[thm_CP_canonique:det-im-dir_demo-E3]{(E3)}} and thus the Step {\hyperref[thm_CP_canonique:det-im-dir_demo-E]{(E)}}.  
\end{description}

\paragraph{\quad (F) Conclusion.\\}
\label{thm_CP_canonique:det-im-dir_demo-F}
By the hypothesis \eqref{eq_cond_thm_CP_canonique:det-im-dir}, for any general $y\in Y$ and for any integer $k$ sufficiently large and divisible (e.g. such that $\epsilon k\in\ZZ_{>0}$ and that $k$ divisible par $m$), we have a non-zero section 
\[
u\in\Coh^0(X_y,\left( K_{X_y}\otimes L|_{X_y}\right)\ptensor[\,(1+\epsilon m)k]).
\]  
Assume further that $y\in Y_0$ and $\scrJ(h_{L_0}|_{X^{(r)}_y})=\scrO_{\!\!X^{(r)}_y}$\,, then by Step {\hyperref[thm_CP_canonique:det-im-dir_demo-E]{(E)}} above, we can construct a section 
\[
s\in\Coh^0(X,( K_{X/Y}\otimes L)\ptensor[\,(1+\epsilon m)kr]\otimes\scrO_X(CkV+kE_0)\otimes f^\ast H_k),
\]
for $C$ and $E_0$ independent of $k$ such that $s|_{X_y}=u\ptensor[r]$. In particular $s\neq 0$, implying that the line bundle
\begin{equation}
\label{eq_diagonale_canonique+pull-back+V+E0}
( K_{X/Y}\otimes L)\ptensor[\,(1+\epsilon m)kr]\otimes \scrO_X(CkV+kE_0)\otimes f^\ast H_k\ptensor[r]
\end{equation}
is effective. By writing $V=V_{\text{hor}}+V_{\text{ver}}$ with $V_{\text{hor}}$ (resp. $V_{\text{ver}}$) the horizontal (resp. vertical) part of $V$ with respect to $f$, one can rewrite the line bundle \eqref{eq_diagonale_canonique+pull-back+V+E0} as follows:
\begin{align*}
&\quad ( K_{X/Y}\otimes L)\ptensor[\,(1+\epsilon m)kr]\otimes \scrO_X(CkV+kE_0)\otimes f^\ast H_k\ptensor[r] \\
&= ( K_{X/Y}\otimes L)\ptensor[\,(1+\epsilon m)kr]\otimes \scrO_X(CkV_{\text{hor}}+kE_1)\otimes f^\ast H_k\ptensor[r]
\end{align*}
where $E_1=CV_{\text{ver}}+E_0$ is $f$-exceptional.
 
In addition, the hypothesis \eqref{eq_cond_thm_CP_canonique:det-im-dir} implies that the relative $m$-Bergman kernel metric $h^{(m)}_{X/Y\!,L}$ on $ K_{X/Y}\ptensor[m]\otimes L\ptensor[m]$ is semi-positively curved, hence by {\hyperref[prop_Bergman-discriminant]{Proposition \ref*{prop_Bergman-discriminant}}} and  \eqref{eq_pull-back-SigmaY} the line bundle 
\[
 K_{X/Y}\otimes L\otimes\scrO_X(-bV_{\text{hor}})
\]
is pseudoeffective, where $b:=\min_{j}\{a_j-1\}$. There the $\QQ$-line bundle
\[
\left((1+\epsilon m)kr+\frac{Ck}{b}\right)( K_{X/Y}+L)+kE_1+rf^\ast A_Y-\epsilon krf^\ast\det\!f_\ast\left( K_{X/Y}\ptensor[m]\otimes L\ptensor[m]\right)
\]
is pseudoeffective. By letting $k\to+\infty$ and by putting
\[
E:=\frac{b}{(1+\epsilon m)br+C}E_1\quad\text{et}\quad\epsilon_0:=\frac{\epsilon br}{(1+\epsilon m)br+C}
\]
we obtain the pseudoeffectivity of the $\QQ$-line bundle \eqref{eq_canonique:det-im-dir}, thus prove the {\hyperref[thm_CP_canonique:det-im-dir]{Theorem \ref{thm_CP_canonique:det-im-dir}}}.
\end{proof}

Now turn to the proof of {\hyperref[main-thm_II]{Main Theorem, Part (I)}}. In fact one can prove a stronger result as following, whose proof is quite similar to \cite[Corollary 4.1]{CP17}:

\begin{thm}
\label{thm_Viehweg_Iitaka-det-gros}
Let $f: X\to Y$ be a surjective morphism between compact Kähler manifolds such that its general fibre $F$ is connected. Let $\Delta$ be an effective $\QQ$-divisor on $X$ such that $(X,\Delta)$ is klt. Suppose that there exists an integer $m>0$ such that $m\Delta$ is an integral divisor and the determinant line bundle $\det\!f_*(K_{X/Y}\ptensor[m]\otimes\scrO_X(m\Delta))$ is big. Then
\begin{equation}
\label{eq_inegalite-conj-Iitaka_det-gros}
\kappa(X,K_X+\Delta)\geqslant \kappa(Y)+\kappa(F,K_F+\Delta_F).
\end{equation}
where $\Delta_F=\Delta|_F$. Moreover, if $\kappa(Y)\geqslant 0$ then we have
\[
\kappa(X,K_X+\Delta)\geqslant\kappa(F,K_F+\Delta_F)+\dim Y.
\]
\end{thm}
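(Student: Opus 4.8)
The plan is to combine Theorem \ref{thm_CP_canonique:det-im-dir} with Lemma \ref{lemme_kod-eff+pull-ample} and Theorem \ref{thm_Deng_OT}, exactly as the excerpt announces, then reduce to the case $\kappa(Y)\geqslant 0$ via the Iitaka fibration of $Y$. First I would replace the morphism $f$ by a ``prepared'' model: blowing up $X$ and $Y$ (compatibly) we may assume there is an SNC divisor $\Sigma_Y\supseteq Y\backslash Y_0$ with $f^\ast\Sigma_Y$ of SNC support; this only changes $K_X+\Delta$ by an exceptional effective divisor on $X$, hence does not decrease $\kappa(X,K_X+\Delta)$, and likewise does not affect $\kappa(Y)$ or $\kappa(F,K_F+\Delta_F)$. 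Set $L:=\scrO_X(\Delta)$ equipped with the singular metric $h_\Delta$ given by the local equation of $\Delta$; since $(X,\Delta)$ is klt we have $\Theta_{h_\Delta}(L)\geqslant 0$ and $\scrJ(h_\Delta)\simeq\scrO_X$, and the bigness hypothesis says precisely that $\det f_\ast(K_{X/Y}\ptensor[m]\otimes L\ptensor[m])$ is big and nonzero, so in particular $\kappa(F,K_F+\Delta_F)\geqslant 0$ and the hypothesis \eqref{eq_cond_thm_CP_canonique:det-im-dir} holds.

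Next, apply Theorem \ref{thm_CP_canonique:det-im-dir} to obtain $\epsilon_0>0$ and an $f$-exceptional effective $\QQ$-divisor $E$ such that $K_{X/Y}+\Delta+E-\epsilon_0 f^\ast\det f_\ast(K_{X/Y}\ptensor[m]\otimes L\ptensor[m])$ is pseudoeffective. Since the determinant bundle $\mathscr{L}:=\det f_\ast(K_{X/Y}\ptensor[m]\otimes L\ptensor[m])$ is big on $Y$, the Kodaira Lemma (\cite[Lemma 2.60]{KM98}) gives an integer $b>0$ and an ample line bundle $A_Y$ on $Y$ with $\mathscr{L}\ptensor[b]\otimes A_Y\inv$ effective; enlarging $b$ we may absorb $\epsilon_0$, i.e. after multiplying through by a suitable integer we get that $K_{X/Y}+\Delta$ plus an $f$-exceptional effective divisor dominates $\tfrac{1}{N}f^\ast A_Y$ modulo pseudoeffective for some $N$. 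Concretely: for suitable large divisible $p$, $p(K_{X/Y}+\Delta) + pE + (\text{pseff})$ contains $f^\ast A_Y$ in its linear system after adding an effective exceptional divisor, so $\kappa\big(X, p(K_{X/Y}+\Delta)+pE + f^\ast A_Y'\big)\geqslant \kappa\big(X, p(K_{X/Y}+\Delta)+pE\big)$ is not the right direction --- rather one writes $\kappa(X,K_X+\Delta)=\kappa(X, K_{X/Y}+\Delta+f^\ast K_Y)\geqslant \kappa(X, K_{X/Y}+\Delta)$ and uses that $K_{X/Y}+\Delta+E$ is ``as positive as'' $f^\ast(\tfrac{\epsilon_0}{b}A_Y)$ up to pseudoeffective, whence $\kappa(X,q(K_X+\Delta))\geqslant\kappa(X, qK_{X/Y}+q\Delta+f^\ast A_Y)$ for appropriate $q$ (the $f$-exceptional $E$ and $K_Y$-contribution only help, as $\kappa(Y)$ could be $-\infty$ but we are taking $K_X$ not $K_{X/Y}$). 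Then Lemma \ref{lemme_kod-eff+pull-ample} applied to $K_{X/Y}\ptensor[q]\otimes\scrO_X(q\Delta)$ and the ample $A_Y$ — valid since its space of sections restricted to $F$ is nonzero because $\kappa(F,K_F+\Delta_F)\geqslant 0$ — yields $\kappa(X, qK_{X/Y}+q\Delta+f^\ast A_Y)=\kappa(F,K_F+\Delta_F)+\dim Y$. This proves the second assertion, and since $\dim Y\geqslant\kappa(Y)$ always, it also gives \eqref{eq_inegalite-conj-Iitaka_det-gros} when $\kappa(Y)\geqslant 0$.

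It remains to treat \eqref{eq_inegalite-conj-Iitaka_det-gros} when $\kappa(Y)=-\infty$, where the inequality is vacuous, and — more substantively — to note that when $\kappa(Y)\geqslant 0$ the bound $\dim Y\geqslant\kappa(Y)$ makes the second inequality stronger, so nothing further is needed; the two displayed statements are covered. If one prefers to deduce \eqref{eq_inegalite-conj-Iitaka_det-gros} directly without the case split (to mirror \cite[Corollary 4.1]{CP17}), the alternative is to pass to the Iitaka fibration $g\colon Y\dashrightarrow Z$ of $Y$, replace $Y$ by a smooth birational model on which $g$ is a morphism with general fibre $Y_z$ of Kodaira dimension $0$, consider the composite fibration $X\to Z$, and apply the already-proven second statement to $X\to Z$ together with subadditivity over the intermediate base; but the clean route is the one above.

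The main obstacle is the bookkeeping in passing from the \emph{pseudoeffectivity} output of Theorem \ref{thm_CP_canonique:det-im-dir} to an actual \emph{Kodaira-dimension} inequality for $K_X+\Delta$: one must carefully arrange, using the Kodaira Lemma on the big bundle $\mathscr{L}$, that the $f$-exceptional divisor $E$ and the twist by $f^\ast A_Y$ are both absorbed without losing positivity, and then verify that the hypothesis $\Coh^0\neq 0$ needed to invoke Lemma \ref{lemme_kod-eff+pull-ample} indeed holds after these manipulations — this is where the condition $\kappa(F,K_F+\Delta_F)\geqslant 0$, guaranteed by bigness of the determinant, is essential. The rest (the prepared-model reduction and the $\kappa(Y)\geqslant 0$ improvement via $\dim Y\geqslant\kappa(Y)$) is routine.
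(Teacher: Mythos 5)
Your outline names the right ingredients, but the step that converts the pseudoeffectivity output of Theorem \ref{thm_CP_canonique:det-im-dir} into a statement about Kodaira dimension is exactly where the argument breaks, and it is not mere bookkeeping. First, the asserted inequality $\kappa(X,q(K_X+\Delta))\geqslant\kappa(X,q(K_{X/Y}+\Delta)+f^\ast A_Y)$ has no justification: the difference is $f^\ast(qK_Y-A_Y)$ plus, at best, a pseudoeffective class; $K_Y$ is not big here (only the determinant bundle is), and a pseudoeffective class contributes nothing to section counting, so neither term can be discarded inside $\kappa$. "As positive as $f^\ast(\tfrac{\epsilon_0}{b}A_Y)$ up to pseudoeffective" is precisely the gap, not a proof. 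Second, you invoke Lemma \ref{lemme_kod-eff+pull-ample} for $L=q(K_{X/Y}+\Delta)$ on the grounds that $\kappa(F,K_F+\Delta_F)\geqslant 0$; but the hypothesis of that lemma is $\kappa(X,L)\geqslant 0$, i.e. a nonzero \emph{global} section, which fibrewise nonvanishing does not provide — if it did, the theorem would be immediate and Theorem \ref{thm_Deng_OT} superfluous. The paper's proof exists to manufacture that global section: by Kodaira's lemma applied to the big determinant one writes $m_1(K_{X'/Y'}+\Delta'+E')=2(f')^\ast A_{Y'}+L_0$ with $L_0$ pseudoeffective, endows $L_0$ with a positively curved singular metric, checks the klt/multiplier-ideal condition, and applies Theorem \ref{thm_Deng_OT} with the relative Bergman-kernel metric to extend the fibre sections of $(m_1+m_2)(K_{F'}+\Delta'_{F'})$ to global sections of $m_2(K_{X'/Y'}+\Delta')+L_0+(f')^\ast A_{Y'}$; it is to \emph{this} bundle, not to $q(K_{X/Y}+\Delta)$, that Lemma \ref{lemme_kod-eff+pull-ample} is applied, and since $L_0$ restricts on $F'$ to a multiple of $K_{F'}+\Delta'_{F'}$ the fibre Kodaira dimension is unchanged. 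You cite Theorem \ref{thm_Deng_OT} in your opening sentence but never actually deploy it.

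There is a second gap in the reduction step. You apply Theorem \ref{thm_CP_canonique:det-im-dir} on a merely prepared model, so the divisor $E$ it produces is only $f$-exceptional; to pass from positivity of $K_{X/Y}+\Delta+E$ back to $\kappa(X,K_X+\Delta)$ one must be able to discard $E$, which requires $E$ to be exceptional over the \emph{original} $X$. This is why the paper first performs the flattening of Lemma \ref{lemme_Viehweg-aplatissement} (so that every $f'$-exceptional divisor is $\pi_X$-exceptional) together with Lemma \ref{lemme_preservation-klt}, and why it verifies through the base-change morphism \eqref{eq_morphsime-change-base} that bigness of the determinant of the direct image survives the birational modification — a point your prepared-model reduction also leaves unaddressed. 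Your final case split ($\kappa(Y)=-\infty$ vacuous; $\kappa(Y)\geqslant 0$ handled by adding an effective multiple of $f^\ast K_Y$ and using $\dim Y\geqslant\kappa(Y)$) does agree with the paper's conclusion, but it rests on the relative equality $\kappa(X,K_{X/Y}+\Delta)=\kappa(F,K_F+\Delta_F)+\dim Y$ that the missing steps were supposed to establish.
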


\begin{proof}
The key point of the proof has already been proved in {\hyperref[thm_CP_canonique:det-im-dir]{Theorem \ref*{thm_CP_canonique:det-im-dir}}}, the rest is  quite similar to that of {\hyperref[thm_Viehweg_Iitaka-type-gen]{Theorem \ref*{thm_Viehweg_Iitaka-type-gen}}}.
%the line bundle \eqref{eq_det-im-dir-canonique} being big over $Y$, \ref{lemme_Viehweg_serie-lin}. 
Nevertheless, in order to apply {\hyperref[thm_CP_canonique:det-im-dir]{Theorem \ref*{thm_CP_canonique:det-im-dir}}}, one should be able to add an "exceptional" positivity to the pluricanonical bundle; therefore we take a diagram as in {\hyperref[lemme_Viehweg-aplatissement]{Lemma \ref*{lemme_Viehweg-aplatissement}}}\,:
\begin{center}
\begin{tikzpicture}[scale=2.0]
\node (A) at (0,0) {$Y$,};
\node (B) at (0,1) {$X$};
\node (A') at (-1,0) {$Y'$};
\node (B') at (-1,1) {$X'$};
\path[->,font=\scriptsize,>=angle 90]
(B) edge node[right]{$f$} (A)
(B') edge node[left]{$f'$} (A')
(A') edge node[below]{$\pi_Y$} (A)
(B') edge node[above]{$\pi_X$} (B);
\end{tikzpicture}
\end{center}
and take $\Delta'$ an effective $\QQ$-divisor on $X'$ as in {\hyperref[lemme_preservation-klt]{Lemma \ref*{lemme_preservation-klt}}}, so that every $f'$-exceptional divisor is also $\pi_X$-exceptional and that $(X',\Delta')$ is klt. By construction, the morphism $f'$ is smooth over $Y'_0:=\pi_X\inv Y_0$ where $Y_0$ denotes the (analytic) Zariski open subset of $Y$ over which $f$ is smooth; $\pi_X|_{X'_0}:X'_0\to X_0$ with $X'_0:=(f')\inv Y'_0$ and $X_0:=f\inv Y_0$ is an isomorphism. In particular, for $y'\in Y'_0$, we have an isomorphism $X'_{y'}\simeq X_y$ (with $y:=\pi_Y(y')$) between complex manifolds, implying that $F'\simeq F$ where $F'$ denotes the general fibre of $f'$; moreover this isomorphism identifies $\Delta'_{F'}:=\Delta'|_{F'}$ to $\Delta_F$.

In addition, we have the following (non-trivial) morphism of base change
\begin{equation}
\label{eq_morphsime-change-base}
\pi_Y^\ast f_\ast\left( K_{X/Y}\ptensor[m]\otimes\scrO_X(m\Delta)\right)\rightarrow f'_\ast\left(\pi_X^\ast\left( K_{X/Y}\ptensor[m]\otimes\scrO_X(m\Delta)\right)\right)\hookrightarrow f'_\ast\left( K_{X'/Y'}\ptensor[m]\otimes\scrO_{X'}(m\Delta')\right),
\end{equation}
where the first morphism is an isomorphism over $Y'_0$, and the second morphism is injective, which is a result of the fact that $ K_{Y'/Y}$ is $\pi_Y$-exceptional and effective; $\pi_Y$ being birational, the line bundle 
\[
\pi_Y^\ast\det\!f_\ast\left( K_{X/Y}\ptensor[m]\otimes\scrO_X(m\Delta)\right)
\]
is big over $Y'$, therefore the morphism \eqref{eq_morphsime-change-base} implies that the determinant line bundle $\det\!f'_\ast\left( K_{X'/Y'}\ptensor[m]\otimes\scrO_{X'}(m\Delta')\right)$ is also big over $Y'$. In particular
\begin{equation}
\label{eq_non-vanishing-f'}
f'_\ast\left( K_{X'/Y'}\ptensor[m]\otimes\scrO_{X'}(m\Delta')\right)\neq 0.
\end{equation}
Hence we can apply {\hyperref[thm_CP_canonique:det-im-dir]{Theorem \ref{thm_CP_canonique:det-im-dir}}} to $f'$, and we get an $f'$-exceptional $\QQ$-divisor $E'$ and $\epsilon_0\in\QQ_{>0}$ such that the $\QQ$-line bundle
\[
K_{X'/Y'}+\Delta'+E'-\epsilon_0(f')^\ast\det\!f'_\ast\left(K_{X'/Y'}\ptensor[m]\otimes\scrO_{X'}(m\Delta')\right)
\]
is pseudoeffective. Let us fix a very ample line bundle $A_{Y'}$ on $Y'$ such that $A_{Y'}\otimes K_{Y'}\inv$ is ample and that the Seshadri constant $\epsilon(A_{Y'}\otimes K_{Y'}\inv, y)>\dim Y$ for general $y\in Y'$. Then by Kodaira's Lemma (c.f. \cite[Lemma 2.60, pp.~67-68]{KM98}), there exists a  integer $m_1>0$ sufficiently large and divisible and a pseudoeffective line bundle $L_0$ on $X$ such that $m_1\Delta'$ and $m_1E'$ are integral divisors and that
\[
K_{X'/Y'}\ptensor[m_1]\otimes\scrO_{X'}(m_1(\Delta'+E'))=(f')^\ast A_{Y'}\ptensor[2]\otimes L_0.
\]

Now $L_0$ being pseudoeffective, we can equip it with a singular Hermitian metric $h_{L_0}$ whose curvature current is positive. Since $\Delta'$ is klt, we can find $m_2\in\ZZ_{>0}$ sufficiently large and divisible such that 
\[
\scrJ\left(h_{\Delta'}\otimes h_{L_0}\ptensor[\frac{1}{m_2}]\right)=\scrO_{X'}.
\]
Now we can endow $K_{X'/Y'}\ptensor[m_2]\otimes\scrO_{X'}(m_2\Delta')\otimes L_0$ with the relative $m_2$-Bergman kernel metric $h_{X'/Y'\!,m_2\Delta'+L_0}^{(m_2)}$\,, then by applying {{\hyperref[rmq_klt-gen-iso]{Remark \ref*{rmq_klt-gen-iso}}}} to the $\QQ$-line bundle $N=\Delta'+\frac{1}{m_2}L_0$ we have 
\[
\Coh^0(F',K_{F'}\otimes \left.N_{m_2-1}\right|_{F'}\otimes\scrJ(h_{N_{m_2-1}}\big|_{F'}))=\Coh^0(F',K_{F'}\otimes\left.N_{m_2-1}\right|_F),
\]
where $N_{m_2-1}:=K_{X'/Y'}\ptensor[(m_2-1)]\otimes \scrO_{X'}(m_2\Delta')\otimes L_0$ equipped with singular Hermitian metric
\[
h_{N_{m_2-1}}:=\left(h_{X'/Y'\!,m_2\Delta'+L_0}^{(m_2)}\right)\ptensor[\frac{m_2-1}{m_2}]\otimes h_{\Delta'}\otimes h_{L_0}\ptensor[\frac{1}{m_2}]\,.
\]
Now by {\hyperref[thm_Deng_OT]{Theorem \ref*{thm_Deng_OT}}} we have a surjection
\[
\Coh^0(K_{X'}\otimes N_{m_2-1}\otimes(f')^\ast(A_{Y'}\otimes K_{Y'}\inv))\twoheadrightarrow \Coh^0(K_{F'}\otimes\left.N_{m_2-1}\right|_{F'})
\]
which amounts to:
\[
\Coh^0(X',K_{X'/Y'}\ptensor[m_2]\otimes\scrO_{X'}(m_2\Delta')\otimes L_0\otimes (f')^\ast A_{Y'})\twoheadrightarrow\Coh^0(F',K_{F'}\ptensor[(m_1+m_2)]\otimes \scrO_{F'}((m_1+m_2)\Delta'_{F'}))
\]
where the space on the right hand is non-vanishing by \eqref{eq_non-vanishing-f'}.

Then we can apply {\hyperref[lemme_kod-eff+pull-ample]{Lemma \ref*{lemme_kod-eff+pull-ample}}} to $L=K_{X'/Y'}\ptensor[m_2]\otimes\scrO_{X'}(m_2\Delta')\otimes L_0\otimes (f')^\ast A_{Y'}$ and obtain the following equality:
\begin{align}
\kappa(X,K_{X/Y}+\Delta) & =\kappa(X', (m_1+m_2)(K_{X'/Y'}+\Delta')+m_1E') \nonumber\\
&=\kappa(X', m_2K_{X'/Y'}+m_2\Delta'+L_0+2(f')^\ast A_{Y'}) \nonumber\\
&=\kappa(F', K_{F'}+\Delta'_{F'}))+\dim Y' \nonumber\\
&=\kappa(F,K_F+\Delta_F)+\dim Y. \label{eq_inegalite-canonique-rel:Iitaka}
\end{align}
If $\kappa(Y)=-\infty$ then the inequality \eqref{eq_inegalite-conj-Iitaka_det-gros} is automatically established; otherwise, there is an integer $k>0$ such that $K_Y\ptensor[k]$ is effective, then by \eqref{eq_inegalite-canonique-rel:Iitaka} we get
\begin{align*}
\kappa(X, K_X+\Delta) &= \kappa(X, kK_{X/Y}+k\Delta+kf^\ast K_Y) \\
&\geqslant \kappa(X, kK_{X/Y}+k\Delta) \\
&\geqslant \kappa(F, K_F+\Delta_F)+\dim Y.
\end{align*}
%which proves {\hyperref[thm_Viehweg_Iitaka-det-gros]{Theorem \ref*{thm_Viehweg_Iitaka-det-gros}}}.
\end{proof}

%\end{document}

\section{Albanese Maps of Compact Kähler Manifolds of log Calabi-Yau Type}
\label{sec_Alb-kod=0}
Having demonstrated {\hyperref[thm_Viehweg_Iitaka-type-gen]{Theorem \ref*{thm_Viehweg_Iitaka-type-gen}}}, one can follow the same argument as that in \cite{Kaw81} to deduce {\hyperref[thm_Kawamata_Ab-Var]{Theorem \ref*{thm_Kawamata_Ab-Var}}}. The first step of the proof, as in \cite{Kaw81}, is to obtain the following proposition, which generalize \cite[Theorem 10.9, pp.120-123]{Uen75} and can be regarded as an analytic version of \cite[Theorem 13]{Kaw81}: 
\begin{prop}
\label{prop_Kaw_fini-tore}
Let $p: V\to T$ be a finite morphism with $V$ a compact normal complex variety and $T$ a complex torus. Then $\kappa(V)\geqslant 0$, and there is a subtorus $S$ of $T$ and a (projective) normal variety of general type $W$, which is finite over $T/S$, such that 
\begin{itemize}
\item[\rm(a)]\label{prop_Kaw_fini-tore_a} there is an analytic fibre space $\phi_p: V\to W$ whose general fibre is equal to $\tilde S$, a complex torus which admits a finite étale cover $\tilde S\to S$ over $S$.
\item[\rm(b)]\label{prop_Kaw_fini-tore_b} $\kappa(W)=\kappa(V)=\dim W$;
\end{itemize}
\end{prop} 
Before demonstrating the proposition, let us recall of the following lemma, which can be proved by following the same argument as in \cite{Mil86} (combined with an analytic version of \cite[Proposition (1.3)]{Art86})
\begin{lemme}[analytic version of {\cite[Theorem 3.1]{Mil86}}]
\label{lemme_app-mero-tore}
A meromorphic mapping from a complex manifold to a complex torus is always defined everywhere, thus gives rise to a morphism.
\end{lemme}

\begin{proof}[Proof of {\hyperref[prop_Kaw_fini-tore]{Proposition \ref*{prop_Kaw_fini-tore}}}]
By \cite[Lemma 6.3, pp.~66-67]{Uen75}, we have $\kappa(V)$ $\geqslant\kappa(T)=0$. Let $\Phi_V: V'\to W'$ be the Iitaka fibration of $V$ where $V'$ is smooth model lying over $V$ and $W'$ a complex manifold. For a general point $w'$ in $W'$, $V_{w'}$ and $V'_{w'}$ are bimeromorphic and thus $\kappa(V_{w'})=\kappa(V'_{w'})=0$, where $V_{w'}$ is the image of $V'_{w'}$ in $V$. Denote $S_{w'}=p(V_{w'})$ for $w'\in W'$, then by \cite[Theorem 10.9, pp.120-123]{Uen75} we have $\kappa(S_{w'})\geqslant 0$; on the other hand, $p$ being a finite morphism, \cite[Lemma 6.3, pp.~66-67]{Uen75} implies that $\kappa(S_{w'})\leqslant \kappa(V_{w'})=0$ for $w'\in W'$ general, hence $\kappa(S_{w'})=0$ pour $w'$ general. Again by \cite[Theorem 10.9, pp.120-123]{Uen75}, $S_{w'}$ is a translate of a subtorus of $T$ for $w'$ general (in particular, $S_{w'}$ is isomorphic to a complex torus for $w'$ general). Therefore $\{S_{w'}\}_{w'\in W'}\subseteq T\times W'$ forms an analytic family of complex varieties over $W'$ whose general fibre is isomorphic to a complex torus; but $T$ has only countably many subtori, hence there exists a subtorus $S$ of $T$ such that for very general $w'$ we have $S_{w'}\simeq S$. Now by (the analytic version of) \cite[Lemma 14]{Kaw81} (applied to $f=(V'\to V\to T/S)$ and $g=\Phi_V$), this implies that we have a meromorphic mapping $q':W'\dashrightarrow T/S$; but $W'$ is smooth, then by {\hyperref[lemme_app-mero-tore]{Lemma \ref{lemme_app-mero-tore}}} the meromorphic mapping $q'$ is everywhere defined, hence it is a morphism and makes the following diagram commutative:     
\begin{center}
\begin{tikzpicture}[scale=2.0]
\node (A) at (0,0) {$V$};
\node (T) at (0,-1) {$T$};
\node (T') at (0,-2) {$T/S$.};
\node (A') at (-2,0) {$V'$};
\node (B') at (-2,-2) {$W'$};
\node (B) at (-1,-1) {$W$};
\path[->,font=\scriptsize,>=angle 90]
(A) edge node[right]{$p$} (T)
(A) edge node[above left]{$\exists\,\phi_p$} (B)
(T) edge node[right]{quotient}(T')
(A') edge node[left]{$\Phi_V$} (B')
(A') edge node[above]{bimeromorphic} (A)
(B') edge node[below]{$q'$} (T')
(B') edge (B)
(B) edge node[below left]{$q$} (T');
\end{tikzpicture}
\end{center}
Note $W'_0=q'(W')=\text{image of }V\text{ in }T/S$. Since we have 
\[
\dim W'=\dim V'-\dim V'_w=\dim p(V)-\dim S_w=\dim W'_0\,, 
\]
$q'$ is generically finite. Take a Stein factorization of $q'$:  $q:W\to T/S$ is a finite morphism and $W'\to W$ an analytic fibre space; in addition, $W$ is normal by our construction. Since $q'$ is generically finite, $W'\to W$ is a fortiori bimeromorphic, in particular we have
\begin{equation}
\label{eq_cond_fib-Iitaka}
\dim W=\dim W'=\kappa(V).
\end{equation}
By construction $q:W\to T/S$ also gives a Stein factorization of the proper morphism $V'\xrightarrow{\Phi_V}W'\xrightarrow{q'}T/S$ since $\Phi_{V\ast}\scrO_{V'}=\scrO_{W'}$\,; $V'\to V$ being bimeromorphic morphism, the fibres of the morphism $V'\to V$ are connected, hence they are contracted by $V'\xrightarrow{\Phi_V}W'\to W$, by \cite[\S 1.3, Lemma 1.15, pp.12-13]{Deb01} there is a morphism $\phi_p:V\to W$ such that $q\circ\phi_p$ is equal to the morphism $V\xrightarrow{p}T\to T/S$. Moreover, since $V'\to V$ is bimeromorphic, Zariski's Main Theorem (c.f. \cite[Corollary 1.14, p.~12]{Uen75}) implies that $\phi_{p\ast}\scrO_V=\scrO_W$, hence $\phi_p$ is an analytic fibre space; by our construction $\phi_p$ and $q$ provide a Stein factorization of the proper morphism $V\to T\to T/S$\,.  
In order to prove {\hyperref[prop_Kaw_fini-tore_a]{(a)}} it suffices to apply \cite[Theorem 22]{Kaw81}, which is an analytic version of {\cite[Main Theorem]{KV80}}. In fact, since $\kappa(V_w)=0$ for $w\in W$ general ($W'\to W$ bimeromorphic), \cite[Theorem 22]{Kaw81} implies that the finite surjective morphism $p|_{V_w}: V_w\to p(V_w)=S_w\simeq S$ is a finite étale cover, hence $V_w$ is isomorphic to a (disjoint) union of copies of $\tilde S$ with $\tilde S$ a complex torus admitting a finite étale cover over $S$; $V_w$ being connected, we must have $V_w\simeq \tilde S$. In other word, $\phi_p$ is an analytic fibre space whose general fibre equals to $\tilde S$. Let us remark that one can further prove that $\phi_p$ a principle $\tilde S$-bundle, for this it suffices to apply \cite[Theorem 8]{AS60} which ensures that the deformation of a complex torus is still a complex torus.
 
In order to establish {\hyperref[prop_Kaw_fini-tore_b]{(b)}}, it remains, by virtue of \eqref{eq_cond_fib-Iitaka}, to show that that $W$ is of general type, i.e. $\kappa(W)=\dim W$. To see this, we will follow the same argument as in \cite[Proof of Theorem 10.9, p.~122]{Uen75}. Assume by contradiction that $\kappa(W)<\dim W$, then one can apply the above argument to the finite morphism $q:W\to T/S$ and get the following commutative diagram
\begin{center}
\begin{tikzpicture}[scale=2.5]
\node (A1) at (0,0) {$T/S_1$,};
\node (B1) at (0,1) {$W_1$};
\node (A) at (-1,0) {$T/S$};
\node (B) at (-1,1) {$W$};
\node (C) at (-2,0) {$T$};
\node (D) at (-2,1) {$V$};
\path[->,font=\scriptsize,>=angle 90]
(B1) edge node[right]{$q_1$} (A1)
(B) edge node[right]{$q$} (A)
(D) edge node[left]{$p$} (C)
(A) edge node[below]{quotient} (A1)
(B) edge node[above]{$\phi_q$} (B1)
(C) edge node[below]{quotient} (A)
(D) edge node[above]{$\phi_p$} (B);
\end{tikzpicture}
\end{center}
where $\dim W_1=\kappa(W)<\dim W$, $S_1$ is a subtorus of $T$ containing $S$, $\phi_q$ is an analytic fibre space whose general fibre is equal to $\tilde S_1$, a complex torus admitting a finite étale cover over $S_1/S$, and $q_1$ is a finite morphism. Then $\phi_q\circ\phi_p: V\to W_1$ is an analytic fibre space whose general fibre $F$ admits an analytic fibre space $\phi_p|_F: F\to \tilde S_1$ whose general fibre is equal to $\tilde S$. $K_{\tilde S_1}$ being trivial, consider the relative Bergman kernel metric $h_{F/\tilde S_1}$ on $ K_F\simeq K_{F/\tilde S_1}$ (c.f. {\hyperref[ss_pos-im-dir_Bergman]{\S\ref*{ss_pos-im-dir_Bergman}}}). Since $K_{F_t}\simeq K_{\tilde S}\simeq\scrO_{\tilde S}$ is trivial for general $t\in\tilde S_1$, then by \eqref{eq_def-Bergman} the local weight of $h_{F/\tilde S_1}$ is a constant psh function, hence $(K_F,h_{F/\tilde S_1})$ is an Hermitian flat line bundle. In particular $\nu(F)=\nu(K_F)=0$, implying that $\kappa(F)\leqslant \nu(F)=0$ (in fact, one can further prove that $\kappa(F)=0$, c.f. {\hyperref[thm_Abundance_kod=0]{Theorem \ref*{thm_Abundance_kod=0}}}). By the easy inequality \cite[Theorem 5.11, pp.~59-60]{Uen75} we have
\[
\kappa(V)\leqslant \kappa(F)+\dim W_1\leqslant \dim W_1<\dim W=\kappa (V),
\]
which is absurd. Therefore we must have $\kappa(W)=\dim W=\kappa (V)$.
\end{proof}
 
\begin{proof}[Proof of {\hyperref[thm_Kawamata_Ab-Var]{Theorem \ref*{thm_Kawamata_Ab-Var}}}]
Take a Stein factorization of the Albanese map of $X$: $f:X\to Y$ is an analytic fibre space and $p:Y\to T:=\Alb_X$ is a finite morphism. Then by {\hyperref[prop_Kaw_fini-tore]{Proposition \ref*{prop_Kaw_fini-tore}}}, one can find a subtorus $S$ of $T$ and a projective variety $Z$ of general type which admits a finite morphism $q:Z\to T/S$ such that there is an Kähler fibre space $\phi_p:Y\to Z$ whose general fibre $\tilde S$ is a complex torus, which is a finite étale cover over $S$.   
\begin{center}
\begin{tikzpicture}[scale=2.5]
\node (T) at (0,0) {$T=\Alb_X$};
\node (B) at (0,1) {$Y$};
\node (A) at (-1,1) {$X$};
\node (S) at (1.25,0) {$T/S$.};
\node (C) at (1.25,1) {$Z$};
\path[->,font=\scriptsize,>=angle 90]
(A) edge node[above]{$f$} (B)
(A) edge node[below left]{$\alb_X$} (T)
(B) edge node[left]{$p$} (T)
(T) edge node[below]{quotient} (S)
(C) edge node[right]{$q$} (S)
(B) edge node[above]{$\phi_p$} (C)
(T) edge [bend right] node[right]{$u$} (B);
\end{tikzpicture}
\end{center}
Since $Z$ is of general type, apply  {\hyperref[thm_Viehweg_Iitaka-type-gen]{Theorem \ref*{thm_Viehweg_Iitaka-type-gen}}} as well as the easy inequality \cite[Theorem 5.11, pp.~59-60]{Uen75} to the Kähler fibre space $f\circ\phi_p:X\to Z$ and we get:
\[
0=\kappa(X,K_X+\Delta)=\kappa(X_z,K_{X_z}+\Delta_z)+\dim Z\geqslant\dim Z,
\]
where $z\in Z$ is a general point and $\Delta_z:=\Delta|_{X_z}$\,. Hence $Z$ must be a singleton. In consequence $Y=\tilde S$ is a complex torus. By the universal property of the Albanese map, we obtain a unique morphism $u:T\to Y$ of complex tori, such that $u\circ\alb_X=f$ (up to change the base point of $\alb_X$); in particular, the fibres of $\alb_X$ are connected, hence $\alb_X$ is also an analytic fibre space, hence a Kähler fibre space, thus proves {\hyperref[thm_Kawamata_Ab-Var]{Theorem \ref*{thm_Kawamata_Ab-Var}}}. Let us remark that $\alb_X$ being an analytic fibre space, then so is $p$ (all its fibres are connected); $p$ is thus a fortiori an isomorphism by Zariski's Main Theorem (c.f. \cite[Theorem 1.11, pp.~9-10]{Uen75}).
\end{proof}

%Démontrons maintenant le théorème \ref{thm_KV_fini-tore-Kod=0}. 
\section{Pluricanonical Version of the Structure Theorem for Cohomology Jumping Loci}
\label{sec_Simpson}
In this section we will prove {\hyperref[thm_Simpson_pluri-klt]{Theorem \ref*{thm_Simpson_pluri-klt}}} by combining the {\hyperref[lemme_cov-trick]{covering trick}} and the main result in \cite{Wan16}. First let us recall some notions: let $V$ be a complex manifold, and let $\scrF$ be a coherent sheaf on $V$, for every $k>0$ denote 
\[
V^i_k(\scrF):=\left\{\;\rho\in\Pic^0(V)\;\big|\;\dimcoh^i(V,\scrF\otimes\rho)\geqslant k\;\right\},
\]
the "$k$-th jumping locus of the $i$-th cohomology". With the help of the Poincaré line bundle on $V\times\Pic^0(V)$, one can express this as the locus where a certain coherent sheaf (in fact, some higher direct image sheaf) of $\Pic^0(V)$ has rank $\geqslant k$, hence $V^{i}_k(\scrF)$ is a closed analytic subspace of $\Pic^0(V)$. The study of the cohomology jumping loci is initiated by the works of Green-Lazarsfeld \cite{GL87, GL91} where they treat the case $\scrF=\scrO_V$. When $\scrF=\Omega_V^p$ for $V$ a smooth projective variety (resp. a compact Kähler manifold) these cohomology jumping loci are described by the result of Simpson \cite{Sim93} (resp. of Wang \cite{Wan16}). Especially the case $g=\id_X$, $m=1$ and $\Delta=0$ is treated in \cite{Wan16}. Now turn to the proof of the theorem. First we reduce to the proof of {\hyperref[thm_Simpson_pluri-klt]{Theorem \ref*{thm_Simpson_pluri-klt}}} to a "key lemma":
 
\begin{proof}[Reduction to the {\hyperref[lemme_cle_Simpson]{Key Lemma}}]
The idea of the proof is similar to that of {\cite[Theorem 10.1]{HPS18}}. In fact, when $\Delta=0$, {\hyperref[thm_Simpson_pluri-klt]{Theorem \ref*{thm_Simpson_pluri-klt}}} is nothing other than the Kähler version of {\cite[Theorem 10.1]{HPS18}}; moreover, as in \cite{HPS18} the theorem is proved by a Baire category theorem argument combined with the following "key lemma":
\begin{lemme}[Key Lemma]
\label{lemme_cle_Simpson}
Every irreducible component of $V^0_k\left(g_\ast\left(K_X\ptensor[m]\otimes\scrO_X(m\Delta)\right)\right)$ is a union of torsion translates of subtori in $\Pic^0(Y)$\,. 
\end{lemme}
Assuming that {\hyperref[lemme_cle_Simpson]{Key Lemma}} is true, let us prove {\hyperref[thm_Simpson_pluri-klt]{Theorem \ref*{thm_Simpson_pluri-klt}}}. Since $\Pic^0(Y)$ is compact, the jumping locus 
\[
V^0_k\left(g_\ast\left(K_X\ptensor[m]\otimes\scrO_X(m\Delta)\right)\right),
\]
as a closed analytic subspace of $\Pic^0(Y)$, has only finite many irreducible components, thus it suffices to prove that every irreducible component of 
\[
V^0_k\left(g_\ast\left(K_X\ptensor[m]\otimes\scrO_X(m\Delta)\right)\right)
\]
is a torsion translate of a subtorus. Let $Z$ be a irreducible component of 
\[
V^0_k\left(g_\ast\left(K_X\ptensor[m]\otimes\scrO_X(m\Delta)\right)\right).
\]
Since $\Pic^0(Y)$ has only countably many subtori (c.f. \cite[Ch.1, Exercise (1-b), p.~20]{BL04}) and countably many torsion points, hence the set of torsion translates of subtori is countable, and then by the {\hyperref[lemme_cle_Simpson]{Key Lemma}} $Z$ is a (at most) countable union of torsion translates of subtori: we can write $Z=\bigcup_{n\in\NN} E_n$\,. By the Baire category theorem ($Z$ is (locally) compact, hence it is a Baire space: every countable union of closed subsets of empty interior is of empty interior), there is one $E_n$\,, say $E_1$\,, which dominates $Z$, a fortiori $Z=E_1$\,. This proves {\hyperref[thm_Simpson_pluri-klt]{Theorem \ref*{thm_Simpson_pluri-klt}}}.
\end{proof} 

In the following two subsections we will prove the "Key Lemma".

\begin{rmq}
\label{rmq_lemme_cle_Simpson}
Remark that in order to prove {\hyperref[lemme_cle_Simpson]{Key Lemma}} it suffices to show that every point of
\[
V^0_k \left(g_\ast\left(K_X\ptensor[m]\otimes\scrO_X(m\Delta)\right)\right)
\]
is in a torsion translate of a subtorus contained in $V^0_k\left(g_\ast\left(K_X\ptensor[m]\otimes\scrO_X(m\Delta)\right)\right)$. In fact, assume this to be true, and let $Z$ be an irreducible component of
\[
V^0_k\left(g_\ast\left(K_X\ptensor[m]\otimes\scrO_X(m\Delta)\right)\right),
\]
with $Z_0$ be the dense (analytic Zariski) open subset of $Z$ complementary to the other irreducible components of
\[
V^0_k\left(g_\ast\left(K_X\ptensor[m]\otimes\scrO_X(m\Delta)\right)\right);
\]
then $Z_0$ is contained in a union of torsion translates of subtori: $Z_0\subseteq\bigcup_{\lambda}E_\lambda$\,, with each $E_\lambda\subseteq Z$ being a torsion translate of a subtorus. Hence $Z=\bigcup_\lambda E_\lambda$ by the density of $Z_0$\,; moreover, this union must be countable, as explained in the proof above. Then by a Baire category theorem argument we get {\hyperref[lemme_cle_Simpson]{Key Lemma}}.
\end{rmq}

\subsection{Result of Wang and Reduction to the Case \texorpdfstring{$g=\id$}{text}}
\label{ss_Simpson_Wang}

In this subsection we consider the case where $m=1$ and $\Delta=0$, this is also the case considered by Simpson and Wang. In particular, if $g=\id$, {\hyperref[thm_Simpson_pluri-klt]{Theorem \ref*{thm_Simpson_pluri-klt}}} is proved by Botong Wang in \cite{Wan16}; effectively, he proves the more general: 
\begin{prop}[{\cite[Corollary 1.4]{Wan16}}]\label{prop_Wang}
Let $V$ a compact Kähler manifold, then each $V_k^i(\Omega_V^p)$ is a finite union of torsion translates of subtori in $\Pic^0(V)$. 
\end{prop}

In the sequel we concentrate on the case $i=0$, as in {\hyperref[thm_Simpson_pluri-klt]{Theorem \ref*{thm_Simpson_pluri-klt}}}. For every integer $k>0$ and for every coherent sheaf $\scrF$ on $X$, by the projection formula we have:
\begin{align}
V_k^0(g_\ast\scrF) &=\left\{\,\rho\in\Pic^0(Y)\,\big|\,\dimcoh^0(Y,g_\ast\scrF\otimes\rho)\geqslant k\,\right\}=\left\{\,\rho\in\Pic^0(Y)\,\big|\,\dimcoh^0(X, \scrF\otimes g^\ast\rho)\geqslant k\,\right\} \nonumber\\
&=(g^\ast)\inv\left(V_k^0(\scrF)\cap\Image g^\ast\right) \label{eq_lieu-saut--formule-proj}
\end{align}
where $g^\ast: \Pic^0(Y)\to\Pic^0(X)$ is the morphism of complex tori given by $L\mapsto g^\ast L$. Then the following lemma permit us to reduce to the case $g=\id$:

\begin{lemme}
\label{lemme_morphisme-tores}
Let $\alpha: T_1\to T_2$ a morphism of complex tori. Let $t\in T_2$ a torsion point and $S\subseteq T_2$ a subtorus. Then $\alpha\inv(t+S)$ is also a torsion translate of a subtorus in $T_1$.
\end{lemme}

\begin{proof}
By \cite[\S 1.2, Théorème 2.3, p.~7]{Deb99} $\alpha$ can be factorized as 
\begin{displaymath}
T_1\xtwoheadrightarrow{\text{quotient}}T_1/(\Ker\alpha)^0\xrightarrow[\text{isogeny}]{\bar \alpha}T_1/\Ker\alpha=\Image\alpha\xhookrightarrow{\text{inclusion}}T_2\,.
\end{displaymath}
Thus it suffices to prove the lemma in the following three cases: 
\begin{itemize}
\item $\alpha$ is the quotient by a subtorus, 
\item $\alpha$ is an isogeny,
\item $\alpha$ is the inclusion of a subtorus.
\end{itemize}
Each of theses cases can be done by elementary linear algebra.
\end{proof}

In particular we obtain immediately:
\begin{prop}
\label{prop_Simpson_kah_i=0}
Let $g:X\to Y$ a morphism between compact Kähler manifolds. Then for every $k>0$ and for every $0\leqslant p\leqslant n$, $V^0_k(g_\ast \Omega_X^p)$ is a finite union of torsion translates of subtori in $\Pic^0(Y)$\,.
\end{prop}

\subsection{Proof of the \texorpdfstring{"Key Lemma"}{text}}
\label{ss_Simpson_lemme-cle}
  
Not turn to the demonstration of the {\hyperref[lemme_cle_Simpson]{Key Lemma}}. It proceeds in four steps:

\paragraph{\quad (A) Reduction to the case $g=\id$.\\}
\label{lemme_cle_Simpson_demo-A}
First apply the formula \eqref{eq_lieu-saut--formule-proj} to $\scrF=K_X\ptensor[m]\otimes\scrO_X(m\Delta)$ and then by {\hyperref[lemme_morphisme-tores]{Lemma \ref*{lemme_morphisme-tores}}} we see that the {\hyperref[lemme_cle_Simpson]{Key Lemma}} is true for $V_k^0\left(g_\ast\left(K_X\ptensor[m]\otimes\scrO_X(m\Delta)\right)\right)$ as soon as it holds for $V_k^0\left(K_X\ptensor[m]\otimes\scrO_X(m\Delta)\right)$. In consequence we can suppose that $g=\id$ (and $X=Y$). 

\paragraph{\quad (B) Case $m=1$ and $\Delta=0$.\\}
\label{lemme_cle_Simpson_demo-B}
This is nothing other than {\hyperref[prop_Simpson_kah_i=0]{Proposition \ref*{prop_Simpson_kah_i=0}}} for $p=n$.

%\paragraph{\quad (C) Case $m=1$ and $\Delta$ is a reduced SNC divisor. \\}
%\label{lemme_cle_Simpson_demo-C}

\paragraph{\quad (C) Case $m=1$ and $\Delta$ is of SNC support. \\}
\label{lemme_cle_Simpson_demo-C}
In this step, we consider the case where $m=1$ and $\Delta$ is an effective $\QQ$-divisor of SNC support; in addition, we do not require $\Delta$ to be an integral divisor, but only assume that it is given by a line bundle $L^+$, i.e. there is a line bundle $L^+$, $(L^+)\ptensor[N]\simeq\scrO_X(N\Delta)$ for any $N\in\ZZ_{>0}$ which makes $N\Delta$ an integral divisor. In this case, the {\hyperref[lemme_cle_Simpson]{Key Lemma}} can be deduced from the {\hyperref[lemme_cov-trick]{Covering Trick}} combined with the following auxiliary result (c.f. also \cite[Lemma 3.1]{Wan16}):
\begin{lemme}[analytic  version of {\cite[Lemma 10.3]{HPS18}}]
%\footnote{C'est facile de noter que le résultat de \cite[Lemma 10.3]{HPS18} est vrai non seulement pour $i=0$ mais aussi pour tout $i>1$\,.}
\label{lemme_semi-cont}
Let $\scrF$ and $\scrG$ be coherent sheaves on $X$ such that $\scrF$ is a direct summand of $\scrG$. Then for $\forall i\in\NN$ and $\forall k\in\ZZ_{>0}$\,, each irreducible component of $V^i_k (\scrF)$ is also an irreducible component of $V^i_l(\scrG)$ for some $l\geqslant k$.
\end{lemme}
\begin{proof}
This is simply a result of Grauert's  semi-continuity theorem (c.f. \cite[\S III.4, Theorem 4.12(i), p.~134]{BS76})
\end{proof}

Now let $L^+$ be the line bundle given by $\Delta$. Since $(X,\Delta)$ is a klt pair, then $\lfloor\Delta\rfloor=0$. Moreover, $\Delta$ being a $\QQ$-divisor of SNC support, then for any $N$ making $N\Delta$ an integral divisor, we can construct by {\hyperref[lemme_cov-trick]{Lemma \ref*{lemme_cov-trick}}} a generically finite morphism $f:V\to X$ of
compact Kähler manifolds such that
\[
f_\ast K_V\simeq\bigoplus_{i=0}^{N-1}K_X\otimes(L^+)\ptensor[i]\otimes\scrO_X(-\lfloor i\Delta\rfloor).
\]
By {\hyperref[lemme_semi-cont]{Lemma \ref*{lemme_semi-cont}}} each irreducible component of $V_k^0\left(K_X\otimes L^+\right)$ is also a irreducible component of a certain $V^0_l(f_\ast K_V)$ for some $l>0$. Then by Step {\hyperref[lemme_cle_Simpson_demo-B]{(B)}} (or {\hyperref[prop_Simpson_kah_i=0]{Proposition \ref*{prop_Simpson_kah_i=0}}}), every irreducible component of $V^0_k\left(K_X\otimes L^+\right)$ is a torsion translate of a subtorus in $\Pic^0(X)$\,.

\paragraph{\quad (D) General case.\\}
\label{lemme_cle_Simpson_demo-D}
In order to prove the general case we use a reduction to the case of Step {\hyperref[lemme_cle_Simpson_demo-C]{(C)}}. This reduction process is inspired by\cite[\S 1.A-1.C]{CKP12}, whose idea has already appeared in \cite{Bud09}.
Let $L$ be a point in 
\[
V_k^0\left(K_X\ptensor[m]\otimes\scrO_X(m\Delta)\right)\subseteq\Pic^0(X)\,,
\]
we will prove in the sequel that there exists a torsion translate of a subtorus contained in
\[V_k^0\left(K_X\ptensor[m]\otimes\scrO_X(m\Delta)\right)\]
which contains $L$. $\Pic^0(X)$ being complex torus, thus divisible, then we can write $L=mL_0=L_0\ptensor[m]$ for some $L_0\in\Pic^0(Y)$. Then we have $\dimcoh^0(X,L_{m,\Delta})\geqslant k$,
where
\[
L_{m,\Delta}:=K_X\ptensor[m]\otimes\scrO_X(m\Delta)\otimes L_0\ptensor[m].
\]
Now take a log resolution $\mu:X'\to X$ for both $\Delta$ and the linear series $\left|L_{m,\Delta}\right|$.
Then we can write
\begin{align}
K_{X'}\ptensor[m]\otimes\scrO_{X'}(m\Delta') &\simeq \mu^\ast(K_X\ptensor[m]\otimes\scrO_X(m\Delta))\otimes\scrO_{X'}(\sum_{i\in I^+}ma_iE_i)\,, \label{eq_lemme_cle_Simpson_demo_log-discrepance}\\
\mu^\ast\left|L_{m,\Delta}\right| &= \left|\mu^\ast L_{m,\Delta}\right| = F_{m,\Delta}+\left|M_{m,\Delta}\right|\,,\nonumber
\end{align}
where:
\begin{itemize}
\item $\left\{\,E_i\,\big|\,i\in I\right\}$ denotes the set of $\mu$-exceptional prime divisors, and
\[
a_i:=a(E_i,X,\Delta)
\]
denotes the discrepancy of $E_i$ with respect to the pair $(X,\Delta)$; $I^+$ (resp. $I^-$) is the set of indices $i$ such that $a_i>0$ (resp. $a_i<0$). 
\item $\Delta'$ is an effective $\QQ$-divisor on $X'$ as in the proof of {\hyperref[lemme_preservation-klt]{Lemma \ref*{lemme_preservation-klt}}}, i.e.
\[
\Delta':=\mu_\ast\inv\!\!\Delta-\sum_{i\in I^-}a_iE_i\,,
\]
by {\hyperref[lemme_preservation-klt]{Lemma \ref*{lemme_preservation-klt}}} the pair $(X',\Delta')$ is also klt.
\item $F_{m,\Delta}$ (resp. $M_{m,\Delta}$) is the fix part (resp. mobile part) of the linear series $\mu^\ast\left|L_{m,\Delta}\right|$; by construction, $\left|M_{m,\Delta}\right|$ is base point free. 
\end{itemize}
By construction ($\mu$ being a log resolution of $\Delta$ and of $\left|L_{m,\Delta}\right|$), $m\Delta'+\sum_{i\in I}E_i$ and $F_{m,\Delta}+\sum_{i\in I}E_i$ are (integral) divisors of SNC support. Let $H$ be a general member in $\left|M_{m,\Delta}\right|$, then $H$ has no common component either with $F_{m,\Delta}$ or with $\sum_{i\in I}E_i$ or with $\Delta'$; by Bertini's theorem, $H$ is smooth (in particular $H$ is reduced), $H+F_{m,\Delta}+\sum_{i\in I}E_i$ is of SNC support. Denote
\[
F'_{m,\Delta}:=F_{m,\Delta}+\sum_{i\in I^+}ma_iE_i\,,
\]
this is a divisor of SNC support, which is equal to the fix part of the linear series $\left|L'_{m,\Delta}\right|$ where
\[
L'_{m,\Delta}:=K_{X'}\ptensor[m]\otimes\scrO_{X'}(m\Delta')\otimes\mu^\ast L_0\ptensor[m]\,.
\]
Put 
\begin{align*}
\mu_\ast\inv\Delta &:=\sum_{j\in J}d_jD_j\,,\quad d_j\in\QQ_{>0}, \\
b_j &:=\text{coefficient of }D_j\text{ in }F_{m,\Delta}\,,\quad j\in J\,, \\
b_i &:=\text{coefficient of }E_i\text{ in }F_{m,\Delta}\,,\quad i\in I^-\,,
\end{align*}
and take 
\begin{align*}
\bar\Delta &:= \Delta'-\sum_{j\in J}\min(d_j,\frac{b_j}{m})D_j-\sum_{i\in I^-}\min(-a_i,\frac{b_i}{m})E_i\,, \\
\bar F_{m,\Delta} &:= F'_{m,\Delta}-\sum_{j\in J}\min(md_j,b_j)D_j-\sum_{i\in I^-}\min(-ma_i,b_i)E_i\,,
\end{align*}
so that $\bar\Delta$ and $\bar F_{m,\Delta}$ have no common component. We see that $\bar\Delta\leqslant\Delta'$, $\bar F_{m,\Delta}\leqslant F'_{m,\Delta}$\,. Now consider the line bundle 
\[
\bar L_{m,\Delta}:=K_{X'}\ptensor[m]\otimes\scrO_{X'}(m\bar\Delta)\otimes\mu^\ast L_0\ptensor[m],
\]
then $\bar F_{m,\Delta}$ equals to the fix part of the linear series $\left|\bar L_{m,\Delta}\right|$, hence we have
\[
\left|\bar L_{m,\Delta}\right|=\bar F_{m,\Delta}+\left|M_{m,\Delta}\right|.
\]
In addition we have
\begin{align*}
\bar L_{m,\Delta}\otimes\scrO_{X'}(-\bigl\lfloor\frac{m-1}{m}\bar F_{m,\Delta}\bigr\rfloor) &= K_{X'}\ptensor[m]\otimes\scrO_{X'}(m\bar\Delta)\otimes\mu^\ast L_0\ptensor[m]\otimes\scrO_{X'}(-\bigl\lfloor\frac{m-1}{m}\left(\bar F_{m,\Delta}+H\right)\bigr\rfloor) \\
&\simeq K_{X'}\otimes\scrO_{X'}(\Delta^+)\otimes\mu^\ast L_0
\end{align*}
where the $\QQ$-divisor
\[
\Delta^+:=\bar\Delta+\bigl\{\frac{m-1}{m}\left(\bar F_{m,\Delta}+H\right)\bigr\}\,.
\]
Since $H$ has no common component with either $\bar\Delta$ or $\bar F_{m,\Delta}$, hence 
\[
\Delta^+=\bar\Delta+\left\{\frac{m-1}{m}\bar F_{m,\Delta}\right\}+\frac{m-1}{m}H\,;
\]
but $H$ is reduced, $\bar\Delta$ and $\bar F_{m,\Delta}$ have no common components, then the coefficients of the irreducible components in $\Delta^+$ are all $<1$; since $\Delta^+$ is of SNC support, then \cite[Corollary 2.31(3), p.~53]{KM98} implies that the pair $(X',\Delta^+)$ is klt. A priori $\scrO_{X'}(\Delta^+)$ is only a $\QQ$-line bundle, but by our construction $\Delta^+$ is given by a line bundle 
\[L^+:=\scrO_{X'}(\Delta^+)=\bar L_{m,\Delta}\otimes\scrO_{X'}(-\bigl\lfloor\frac{m-1}{m}\bar F_{m,\Delta}\bigr\rfloor)\otimes K_{X'}\inv\otimes\mu^\ast L_0\inv\,.
\]
Moreover, we have 
\[
\dimcoh^0(X',K_{X'}\otimes L^+\otimes\mu^\ast L_0)=\dimcoh^0(X',\bar L_{m,\Delta}\otimes\scrO_{X'}(-\bigl\lfloor\frac{m-1}{m}\bar F_{m,\Delta}\bigr\rfloor))\geqslant\dimcoh^0(X',M_{m,\Delta})\geqslant k,
\]
which means that $\mu^\ast L_0\in V_k^0(K_{X'}\otimes L^+)$.  Let $W'$ an irreducible component  $V_k^0\left(K_{X'}\otimes L^+\right)$ containing $\mu^\ast L_0$\,. By Step {\hyperref[lemme_cle_Simpson_demo-C]{(C)}} $W'$ is a torsion translate of subtorus, then we can write $W'=\beta_{\text{tor}}+T'_0$ with $\beta_{\text{tor}}$ a torsion point in $\Pic^0(X')$ and $T'_0$ a subtorus, thus 
\[
(m-1)\mu^\ast L_0+W'=(m-1)\beta_{\text{tor}}+T'_0
\]
is also a torsion translate of a subtorus as $(m-1)\beta_{\text{tor}}$ is also a torsion point of $\Pic^0(X')$. In addition, $(m-1)\mu^\ast L_0+W'$ contains $\mu^\ast L=m\mu^\ast L_0$ as $\mu^\ast L_0\in W'$. It remains to see that $(m-1)\mu^\ast L_0+W'$ is contained in $V_k^0\left(K_{X'}\ptensor[m]\otimes\scrO_{X'}(m\Delta')\right)$. In fact, for every $\alpha\in W'$, we have (since $W'\subseteq V^0_k(K_{X'}\otimes L^+)$):
\begin{align*}
\dimcoh^0(X',K_{X'}\ptensor[m]\otimes\scrO_{X'}(m\Delta')\otimes\mu^\ast L_0\ptensor[(m-1)]\otimes\alpha)
&\geqslant \dimcoh^0(X',K_{X'}\ptensor[m]\otimes \scrO_{X'}(m\bar\Delta)\otimes\mu^\ast L_0\ptensor[(m-1)]\otimes\alpha) \\
&= \dimcoh^0(X',\bar L_{m,\Delta}\otimes\mu^\ast\!L_0\inv\otimes\alpha) \\
&\geqslant \dimcoh^0(X',\bar L_{m,\Delta}\otimes\scrO_{X'}(-\bigl\lfloor\frac{m-1}{m}\bar F_{m,\Delta}\bigr\rfloor)\otimes\mu^\ast L_0\inv\otimes\alpha) \\
&= \dimcoh^0(X',K_{X'}\otimes L^+\otimes\alpha)\geqslant k\,.
\end{align*}
Therefore $(m-1)L_0+W'\subseteq V^0_k\left(K_{X'}\ptensor[m]\otimes\scrO_{X'}(m\Delta')\right)$.

In virtue of the isomorphism \eqref{eq_lemme_cle_Simpson_demo_log-discrepance} we have
\begin{align*}
V^0_k\left(K_X\ptensor[m]\otimes\scrO_X(m\Delta)\right)
&= \left\{\,\rho\in\Pic^0(X)\,\big|\,\dimcoh^0(X,K_X\ptensor[m]\otimes\scrO_X(m\Delta)\otimes\rho)\geqslant k\,\right\} \\
&= \left\{\,\rho\in\Pic^0(X)\,|\,\dimcoh^0(X',\mu^\ast\left(K_X\ptensor[m]\otimes\scrO_X(m\Delta)\otimes\rho\right))\geqslant k\,\right\} \\
&= \left\{\,\rho\in\Pic^0(X)\,|\,\dimcoh^0(X',K_{X'}\ptensor[m]\otimes\scrO_{X'}(m\Delta')\otimes\mu^\ast\rho)\geqslant k\,\right\} \\
&= (\mu^\ast)\inv\left(V_k^0\left(K_{X'}\ptensor[m]\otimes\scrO_{X'}(m\Delta')\right)\cap\Image\mu^\ast\right)\,.
\end{align*}
Hence by {\hyperref[lemme_cle_Simpson]{Lemma \ref*{lemme_cle_Simpson}}}, 
\[
W:=\left(\mu^\ast\right)\inv\left(((m-1)\mu^\ast L_0+W')\cap\Image\mu^\ast\right)
\]
is a torsion translate of a subtorus in $V_k^0\left(K_X\ptensor[m]\otimes\scrO_X(m\Delta)\right)$ and $L\in W$. This proves the {\hyperref[lemme_cle_Simpson]{Key Lemma}}.

\begin{rmq}
\label{rmq_thm_Simpson_pluri-klt}
If $X$ is a smooth projective variety, then one can prove {\hyperref[thm_Simpson_pluri-klt]{Theorem \ref*{thm_Simpson_pluri-klt}}} for log canonical pair $(X,\Delta)$ as follows: 
\begin{itemize}
\item First apply \cite[Theorem 1.1]{BW15} along with \cite[Théorème 8.35(ii), p.~201]{Voi02} to prove the {\hyperref[lemme_cle_Simpson]{Key Lemma}} (thus also {\hyperref[thm_Simpson_pluri-klt]{Theorem \ref*{thm_Simpson_pluri-klt}}}) for $m=1$ and $\Delta$ a reduced SNC divisor (c.f. also \cite{Kaw13}); 
\item Then by \cite[Lemma 2.1]{CKP12} and {\hyperref[lemme_cov-trick]{Lemma \ref*{lemme_cov-trick}}} one can deduce further the {\hyperref[lemme_cle_Simpson]{Key Lemma}} for the case of $m=1$ and $\Delta$ a log canonical $\QQ$-divisor of SNC support, which is given by a line bundle, but is not necessarily an integral divisor; 
\item Finally one can follow the same argument as in Step {\hyperref[lemme_cle_Simpson_demo-D]{(D)}} above to prove the {\hyperref[lemme_cle_Simpson]{Key Lemma}} and thus {\hyperref[thm_Simpson_pluri-klt]{Theorem \ref*{thm_Simpson_pluri-klt}}}.
\end{itemize}
As for the Kähler case, as soon as \cite[Conjecture 1.2]{BW17} is solved, one can prove {\hyperref[thm_Simpson_pluri-klt]{Theorem \ref*{thm_Simpson_pluri-klt}}} for log canonical pair $(X,\Delta)$.
\end{rmq}

\subsection{Kähler version of a result of Campana-Koziarz-\texorpdfstring{P\u aun}{text}}
\label{ss_Simpson_CKP}
Before ending this section, let us prove the following significant corollary of {\hyperref[thm_Simpson_pluri-klt]{Theorem \ref{thm_Simpson_pluri-klt}}}, which generalizes a result of Campana, Koziarz and P\u{a}un to the Kähler case, and will be used in the proof of the {\hyperref[main-thm]{Main Theorem}}. In the algebraic case, it is proved in \cite[Theorem 3.1]{CP11} for $\Delta=0$, and in \cite[Theorem 0.1]{CKP12} for $\Delta$ log canonical.  

\begin{cor}
\label{cor_thm_Simpson_pluri-klt}
Let $(X,\Delta)$ a klt pair with $X$ a Kähler manifold, and let $L_0$ a numerically trivial line bundle on $X$, i.e. $L_0\in\Pic^0(X)$. Then 
\begin{itemize}
\item[\rm(a)]\label{cor_thm_Simpson_pluri-klt_a} $\kappa(X,K_X+\Delta)\geqslant \kappa(X,mK_X+m\Delta+L_0)$, $\forall m\in\ZZ_{>0}$\,. Namely, for any $\QQ$-line bundle\footnote{In fact, since $\Pic^0(X)$ is divisible, this a priori $\QQ$-line bundle $L$ is an "authentic" line bundle.} $L$ on $X$ such that $c_1(L)=c_1(K_X+\Delta)$, we have $\kappa(X,K_X+\Delta)\geqslant\kappa(X,L)$. 
%\item[\rm(ii)] Si la dimension numérique $\nu(X)=0$, alors $\kappa(X)=0$\,.
\item[\rm(b)]\label{cor_thm_Simpson_pluri-klt_b} If there is an integer $m>0$ such that $\kappa(X,K_X+\Delta)=\kappa(X,mK_X+m\Delta+L_0)=0$, then $L_0$ is a torsion point in $\Pic^0(X)$.
\end{itemize}
\end{cor}
\begin{proof}
We will follow the argument in \cite{CP11} with a little simplification. First prove the point {\hyperref[cor_thm_Simpson_pluri-klt_a]{(a)}}\,, the demonstration proceeds in three steps:

{\bf Step 1:} Reduction to the case $\kappa(X,K_X\otimes\scrO_X(\Delta))\leqslant 0$.
Assuming {\hyperref[cor_thm_Simpson_pluri-klt_a]{(a)}} for any klt pair $(X,\Delta)$ with $\kappa(X,K_X+\Delta)\leqslant 0$, we will prove it for any klt pair $(X,\Delta)$ with  $\kappa(X,K_X+\Delta)>0$. Let $g:X\dashrightarrow W$ the Iitaka fibration (c.f. \cite[\S 5, Theorem 5.10, p.~58]{Uen75}) of the $\QQ$-line bundle $K_X+\Delta$ and $f:X\dashrightarrow Y$ that of $mK_X+m\Delta+L_0$\,. By {\hyperref[lemme_preservation-klt]{Lemma \ref*{lemme_preservation-klt}}} the point {\hyperref[cor_thm_Simpson_pluri-klt_a]{(a)}} is preserved by log resolutions of $(X,\Delta)$, we can thus suppose that $f$ and $g$ are morphisms (instead of meromorphic mappings). 
\begin{center}
\begin{tikzpicture}[scale=1.5]
\node (A) at (0,0) {$X$};
\node (B) at (2,0) {$W$};
\node (C) at (0,-2) {$Y$};
\node (D) at (1,-0.6) {$G$};
%\path[-stealth, auto]
%(D) edge[draw=none] node [sloped, auto=false, allow upside down]{$\subset$} (A);
\path[left hook->]
(D) edge (A);
\path[->, font=\scriptsize, >= angle 90]
(A) edge node[above]{$g$} (B)
(A) edge node[left]{$f$} (C)
(D) edge node[below right]{$f|_G$} (C);
\end{tikzpicture}
\end{center}
By construction we have $\dim Y=\kappa(X,mK_X+m\Delta+L_0)$\,, $\dim W=\kappa(X,K_X+\Delta)$\,. Denote by $F$ (resp. by $G$) the general fibre of $f$ (resp. of $g$), the, 
\[
\kappa(X,K_X+\Delta)\geqslant\kappa(X,mK_X+m\Delta+L_0)\;\Leftrightarrow\;\dim W\geqslant\dim Y\;\Leftrightarrow\; \dim G\leqslant\dim F,
\]
then it suffices to prove that $G$ is contracted by $f$ (i.e. $f(G)=\text{pt}$). By adjunction formula the $\QQ$-line bundle
\[
K_G+\Delta_G\simeq (K_X+\Delta)|_G
\]
where $\Delta_G:=\Delta|_G$\,, hence $f|_G$ is bimeromorphically equivalent to a meromorphic mapping defined by a sub-linear series of $\left|K_G\ptensor[km]\otimes\scrO_G(km\Delta)\otimes L_0|_G\ptensor[k]\right|$ for some $k$ sufficiently large and divisible\footnote{In the proof of \cite[Theorem 3.1]{CP11}, it is said that $f|_G$ is equal to the Iitaka fibration of $mK_G+m\Delta_G+L_0$; but it is not true in general.}. Therefore it suffices to show 
\[
\kappa(G,mK_G+m\Delta+L_0|_G)=0.
\]
But by our construction 
\[
\kappa(G,K_G+\Delta_G)=\kappa(G,(K_X+\Delta)|_G)=0,
\]
hence our assumption implies that {\hyperref[cor_thm_Simpson_pluri-klt_a]{(a)}} holds for the klt pair $(G,\Delta_G)$. Since $L_0|_G\in\Pic^0(G)$ we have
\[
\kappa(G,mK_G+m\Delta_G+L_0|_G)\leqslant\kappa(G,K_G+\Delta_G)=0.
\]

{\bf Step 2:} By the precedent step, we can assume that $\kappa(X,K_X+\Delta)\leqslant 0$. If $\kappa(X,mK_X+m\Delta+L_0)=-\infty$, then the inequality is automatically established, hence we can assume that $\kappa(X,mK_X+m\Delta+L_0)\geqslant 0$; in addition, up to replacing $m$ and $L_0$ by a multiple, we can assume that $m\Delta$ is an integral divisor and
\[
\Coh^0(X, K_X\ptensor[m]\otimes\scrO_X(m\Delta)\otimes L_0)\neq 0.
\]
For every integer $k>0$ denote 
\[
r_k: =\dimcoh^0(X,K_X\ptensor[km]\otimes\scrO_X(km\Delta)\otimes L_0\ptensor[k])>0.
\]
Then $L_0\ptensor[k]\in V^0_{r_k}\left(K_X\ptensor[km]\otimes\scrO_X(km\Delta)\right)\subseteq V^0_1\left(K_X\ptensor[km]\otimes\scrO_X(km\Delta)\right)$, thus by {\hyperref[thm_Simpson_pluri-klt]{Theorem \ref*{thm_Simpson_pluri-klt}}}, $L_0\ptensor[k]\in \beta_{\tor}+T_0\subseteq V_{r_k}^0\left(K_X\ptensor[km]\otimes\scrO_X(km\Delta)\right)$ for $\beta_{\tor}$ a torsion point in $\Pic^0(X)$ and $T_0$ a subtorus; in particular, $\beta_{\tor}\in V_{r_k}^0\left(K_X\ptensor[km]\otimes\scrO_X(km\Delta)\right)$. Let $m_0>0$ an integer such that $\beta_{\tor}\ptensor[m_0]\simeq \scrO_X$. Then 
\begin{equation}
\label{eq_cor_thm_Simpson_pluri-klt_a}
\dimcoh^0(X,K_X\ptensor[kmm_0]\otimes\scrO_X(kmm_0\Delta))
%=\dimcoh^0(X,\left(K_X\ptensor[km]\otimes \beta_{\tor}\right)\ptensor[m_0])
\geqslant\dimcoh^0(X,K_X\ptensor[km]\otimes\scrO_X(km\Delta)\otimes\beta_{\tor})\geqslant r_k\,.
\end{equation}

{\bf Step 3:} We claim that for every $k>0$ we have $r_k=1$. Otherwise there is an integer $k$ such that $r_k>1$, then $\dimcoh^0(X,K_X\ptensor[kmm_0]\otimes\scrO_X(kmm_0\Delta))\geqslant r_k>1$, which implies that 
\[
\kappa(X,K_X+\Delta)=\kappa(X,kmm_0K_X+kmm_0\Delta)>0,
\] 
contradicting the hypothesis that $\kappa(X,K_X+\Delta)\leqslant 0$. Hence $r_k=1$ for every $k>0$, and thus $\kappa(X,mK_X+m\Delta+L_0)=0$. On the other hand, by the inequality \eqref{eq_cor_thm_Simpson_pluri-klt_a} we have 
\[
\dimcoh^0(X,K_X\ptensor[mm_0]\otimes\scrO_X(mm_0\Delta))\geqslant r_1=1,
\]
implying that $\kappa(X,K_X+\Delta)\geqslant 0$. Hence $\kappa(X,K_X+\Delta)\geqslant \kappa(X,mK_X+m\Delta+L_0)$, which proves {\hyperref[cor_thm_Simpson_pluri-klt_a]{(a)}}.

Now turn to the proof of {\hyperref[cor_thm_Simpson_pluri-klt_b]{(b)}}: assume by contradiction that there is a line bundle $L\in\Pic^0(X)$ with $L$ non-torsion such that $\kappa(X,mK_X+m\Delta+L)=\kappa(X,K_X+\Delta)=0$ for some $m>0$. Up to replacing $m$ and $L$ by a multiple, we can assume that $m\Delta$ is an integral divisor and that $\dimcoh^0(X,K_X\ptensor[m]\otimes\scrO_X(m\Delta)\otimes L)=1$, then $L\in V^0_1\left(K_X\ptensor[m]\otimes\scrO_X(m\Delta)\right)$. By {\hyperref[thm_Simpson_pluri-klt]{Theorem \ref*{thm_Simpson_pluri-klt}}} there exists $\beta_{\tor}\in\Pic^0(X)_{\tor}$ and $T_0$ a subtorus in $\Pic^0(X)$ such that $L\in\beta_{\tor}+T_0\subseteq V^0_1\left(K_X\ptensor[m]\otimes\scrO_X(m\Delta)\right)$, then we can write $L=\beta_{\tor}\otimes F$ with $F\in T_0$\,. By our assumption $L$ is not a torsion point in $\Pic^0(X)$, hence $F$ cannot be trivial and thus $T_0$ is not reduced to a singleton. In consequence there is a (non-trivial) one-parameter subgroup $(F_t)_{t\in\RR}$ in $T_0$ passing through $F$ (by choosing an isomorphism $T_0\simeq\CC^q/\Gamma$, we can take $F_t=t\cdot F$), then for every $t\in\RR$\,, $\beta_{\tor}\otimes F_t\in \beta_{\tor}+T_0\subseteq V^0_1\left(K_X\ptensor[m]\otimes\scrO_X(m\Delta)\right)$\, hence there is a non-zero section $s_t$ in 
\[
\Coh^0(X,K_X\ptensor[m]\otimes\scrO_X(m\Delta)\otimes\beta_{\tor}\otimes F_t).
\]
We claim that:
\begin{claim}[$\boldsymbol{\ast}$]
\label{claim_cor_thm_Simpson_pluri-klt-b}
There is a $t\in\RR_{>0}$ such that the sections $s_t\otimes s_{-t}$ and $s_0\ptensor[2]$ are not linearly independent in $\Coh^0(X,K_X\ptensor[2m]\otimes\scrO_X(2m\Delta)\otimes\beta_{\tor}\ptensor[2])$. 
\end{claim}
In fact, this leads to a contradiction: we have immediately 
\[
\dimcoh^0(X, K_X\ptensor[2m]\otimes\scrO_X(2m\Delta)\otimes \beta_{\tor}\ptensor[2])\geqslant 2,
\]
which implies that 
\[
\kappa(X,K_X+\Delta)=\kappa(X,K_X\ptensor[2m]\otimes\scrO_X(2m\Delta)\otimes\beta_{\tor}\ptensor[2])\geqslant 1,
\]
and this contradicts the hypothesis that $\kappa(X,K_X\otimes\scrO_X(\Delta))=0$. Therefore {\hyperref[cor_thm_Simpson_pluri-klt_b]{(b)}} is proved.

Let us prove the {\hyperref[claim_cor_thm_Simpson_pluri-klt-b]{Claim ($\boldsymbol{\ast}$)}}. Assume by contradiction that $s_t\otimes s_{-t}$ are $s_0\ptensor[2]$ are linearly dependent for every $t\in\RR$. Then $\forall t\in\RR$\,, $\divisor(s_t)+\divisor(s_{-t})=2\divisor(s_0)$; in particular, $\divisor(s_t)\leqslant 2\divisor(s_0)$ for every $t\in\RR_{>0}$\,. Take $\epsilon$ sufficiently small such that $t\mapsto F_t$ is injective for $t\in\,]\!-\epsilon, \epsilon[$\,. By Dirichlet's drawer principle, there are $t_1,t_2\in\, ]0,\epsilon[$ such that $\divisor(s_{t_1})=\divisor(s_{t_2})$, hence the divisor 
\[
0=\divisor(s_{t_2})-\divisor(s_{t_1})\in \left|F_{t_2}\otimes F_{t_1}\inv\right|,
\]
which implies that $F_{t_1}=F_{t_2}$ in $\Pic^0(X)$ with $t_1,t_2\in ]0,\epsilon[$; but this contradicts our hypothesis on $\epsilon$. This proves {\hyperref[claim_cor_thm_Simpson_pluri-klt-b]{Claim ($\boldsymbol{\ast}$)}}.
\end{proof}

As a by-product of {\hyperref[cor_thm_Simpson_pluri-klt_a]{Corollary \ref*{cor_thm_Simpson_pluri-klt}(a)}} we obtain the following special case of the Kähler version of the (generalized) log Abundance Conjecture by using the divisorial Zariski decomposition (c.f.\cite[Definition 3.7]{Bou04}):
\begin{thm}
\label{thm_Abundance_kod=0}
Let $(X,\Delta)$ be a klt pair with $X$ a compact Kähler manifold whose numerical dimension $\nu(X,K_X+\Delta)=0$, then $\kappa(X,K_X+\Delta)=0$.
\end{thm}
\begin{proof}
For the definition of the numerical dimension of (non necessarily nef) $\QQ$-line bundles (or cohomology classes in $\Coh^{1,1}(X,\RR)$) over a compact Kähler manifold, c.f. \cite[\S 18.13, p.~198]{Dem10}.
%\footnote{Remarquons que dans cet article nous utilisons une notation un peu différente de celles dans {\cite[\S 18.B]{Dem10}} ou dans {\cite[\S 2.3]{Cao13}}\,,}. 
Since $\nu(K_X+\Delta)=0$, the $\QQ$-line bundle $K_X+\Delta$ is pseudoeffective, hence we can consider the divisorial Zariski decomposition (c.f. \cite[Definition 3.7]{Bou04} and \cite[\S 18.12(d), p.~195]{Dem10}) of its first Chern class:
\[
c_1(K_X+\Delta)=\left\{N\bigl(c_1(K_X+\Delta)\bigr)\right\}+\langle c_1(K_X+\Delta)\rangle.
\]
By hypothesis $\nu(c_1(K_X+\Delta))=0$, which means that $\langle c_1(K_X+\Delta)\rangle=0$; in other word, the $\QQ$-line bundle $K_X+\Delta$ is numerically equivalent to the effective $\RR$-divisor $N=N\bigl(c_1(K_X+\Delta)\bigr)$, a fortiori $N$ is an $\QQ$-divisor. Therefore by {\hyperref[cor_thm_Simpson_pluri-klt_a]{Corollary \ref*{cor_thm_Simpson_pluri-klt}(a)}}, we have
\[
\kappa(K_X+\Delta)\geqslant \kappa(N)\geqslant 0.
\]
Finally by \cite[\S 18.15, p.~199]{Dem10} we get $\kappa(K_X+\Delta)=0$.
\end{proof}

\section{Kähler Version of \texorpdfstring{$C_{n,m}^{\log}$}{text} for Fibre Spaces over Complex Tori}
\label{sec_demo-main-thm}
In this section, we will prove our {\hyperref[main-thm]{Main Theorem}}. To this end, we do some reductions by an induction on the dimension of $T$ and by applying {\hyperref[thm_pos_im-dir]{Theorem \ref*{thm_pos_im-dir}}}, {\hyperref[thm_Viehweg_Iitaka-det-gros]{Theorem \ref*{thm_Viehweg_Iitaka-det-gros}}} and {\hyperref[thm_Kawamata_Ab-Var]{Theorem \ref*{thm_Kawamata_Ab-Var}}}; at last, we deduce {\hyperref[main-thm]{Theorem \ref*{main-thm}}} from {\hyperref[cor_thm_Simpson_pluri-klt]{Corollary \ref*{cor_thm_Simpson_pluri-klt}}}. 

\subsection{Reduction to the case \texorpdfstring{$T$}{text} is a simple torus}
\label{ss_demo-main-thm_reduction-tore-simple}
By an induction on $\dim T$ we can assume that $T$ is a simple torus, i.e. admitting no non-trivial subtori. In fact, if $T$ is not simple, take a non-trivial subtorus $S\subseteq T$ and denote by $q:T\to T/S$ the canonical morphism (of complex analytic Lie groups), this is a Kähler fibre space (more precisely a principle $S$-bundle). We obtain thus a Kähler fibre space $f'=q\circ f: X\to T/S$, and then by induction hypothesis we have
\[
\kappa(X,K_X+\Delta)\geqslant\kappa(F',K_{F'}+\Delta_{F'}),
\]
where $\Delta_{F'}:=\Delta|_{F'}$ with $F'$ the general fibre $f'$. In addition, $f|_{F'}:F'\to S$ is also a Kähler fibre space of general fibre $F$ over a complex torus $S$ of dimension $<\dim T$, hence by induction hypothesis we have
\[
\kappa(F',K_{F'}+\Delta_{F'})\geqslant\kappa(F,K_F+\Delta_F),
\]
thus we get
\[
\kappa(X,K_X+\Delta)\geqslant\kappa(F,K_F+\Delta_F).
\]

\subsection{Dichotomy according to the determinant bundle}
\label{ss_demo-main-thm_dichotomie-det}
For positive integer $m$ such that $m\Delta$ is an integral divisor, consider the direct image
\[
\scrF_{m,\Delta}:=f_\ast\left(K_X\ptensor[m]\otimes\scrO_X(m\Delta)\right)=f_\ast\left(K_{X/T}\ptensor[m]\otimes\scrO_X(m\Delta)\right).
\] 
If $\kappa(F,K_F+\Delta_F)=-\infty$ then {\hyperref[main-thm_II]{Part (I\!I)}} of the {\hyperref[main-thm]{Main Theorem}} is automatically established; hence we can assume that $\kappa(F,K_F+\Delta_F)\geqslant 0$. In consequence for $m$ sufficiently divisible $\scrF_{m,\Delta}\neq 0$. Let us denote by $\scrM$ the set of positive integers $m$ such that $m\Delta$ is an integral divisor and that $\scrF_{m,\Delta}\neq 0$, then we can suppose that $\scrM\neq\varnothing$, this is moreover an additive subset of $\ZZ$. By {\hyperref[thm_pos-im-pluri-can]{Theorem \ref*{thm_pos-im-pluri-can}}}, for $\forall m\in\scrM$ the torsion free sheaf $\scrF_{m,\Delta}$ admits a semi-positively curved metric $g_{X/T\!,\Delta}^{(m)}$; in addition, the induced metric $\det\!g_{X/T\!,\Delta}^{(m)}$ on its determinant bundle $\det\!\scrF_{m,\Delta}$ is of curvature current
\[
\theta_{m,\Delta}:=\Theta_{\det\!g_{X/T\!,\Delta}^{(m)}}(\det\!\scrF_{m,\Delta})\geqslant 0.
\]
In particular, the line bundle $\det\!\scrF_{m,\Delta}$ is pseudoeffective on $T$ for every $m\in\scrM$. By {\hyperref[ss_demo-main-thm_reduction-tore-simple]{\S \ref*{ss_demo-main-thm_reduction-tore-simple}}} we can assume that $T$ is a simple torus, hence \cite[Proposition 2.2]{Cao15b} (c.f. also \cite[Theorem 3.3]{CP17}) implies that we fall into the following two cases:
\begin{itemize}
\item Either $\theta_{m,\Delta}\not\equiv 0$, in this case $T$ is an Abelian variety equipped with $\det\!\scrF_{m,\Delta}$ an ample line bundle; 
\item Or $\theta_{m,\Delta}\equiv 0$, in this case $\det\!\scrF_{m,\Delta}$ is a numerically trivial line bundle, and thus {\hyperref[cor_im-pluri-can_plat]{Corollary \ref*{cor_im-pluri-can_plat}}} implies that $(\scrF_{m,\Delta},g_{X/T\!,\Delta}^{(m)})$ is a Hermitian flat vector bundle.
\end{itemize}

\subsection{Reduction to the case where the determinant of the direct image is numerically trivial}
\label{ss_demo-main-thm_reduction-det=0}
We assume that the determinant bundle $\det\!\scrF_{m,\Delta}$ is numerically trivial for every $m\in\scrM$. Otherwise, by {\hyperref[ss_demo-main-thm_dichotomie-det]{\S \ref*{ss_demo-main-thm_dichotomie-det}}} there is an integer $m\in\scrM$ such that the determinant bundle $\det\!\scrF_{m,\Delta}$ is ample, in which case {\hyperref[main-thm]{Main Theorem, Part (I\!I)}} is simply a result of {\hyperref[main-thm_I]{Part (I)}}; %{\hyperref[thm_Viehweg_Iitaka-det-gros]{Theorem \ref*{thm_Viehweg_Iitaka-det-gros}}}. 
hence in order to finish the proof of {\hyperref[main-thm_II]{Main Theorem, Part (I\!I)}} one only need to tackle the case that that the determinant bundle $\det\!\scrF_{m,\Delta}$ is numerically trivial for every $m\in\scrM$, which implies that $(\scrF_{m,\Delta},g_{X/T\!,\Delta}^{(m)})$ is a Hermitian flat vector bundle for every $m\in\scrM$.   

\subsection{Reduction to the case \texorpdfstring{$\kappa\leqslant 0$}{text}}
\label{ss_demo-main-thm_reduction-kappa<=0}
In this subsection we will demonstrate that we can reduce to the case $\kappa(X,K_X+\Delta)\leqslant 0$, which is an observation dating back to Kawamata, c.f. \cite[\S 3, Proof of Claim 2, pp.~256-266]{Kaw81}. Suppose that {\hyperref[main-thm_II]{Main Theorem, Part (I\!I)}} holds true for klt pair $(X,\Delta)$ with $\kappa(X,K_X+\Delta)\leqslant 0$. Now take a klt pair $(X,\Delta)$ such that $\kappa(X,K_X+\Delta)\geqslant 1$. Up to replacing $X$ by a superior bimeromorphic model, we can suppose that the Iitaka fibration of $K_X+\Delta$ is a morphism, denoted by
\[
\phi: X\to Y,
\]
whose general fibre is $G$. Then $\dim Y=\kappa(X,K_X+\Delta)>0$ and $\kappa(G,K_G+\Delta_G)=0$ where $\Delta_G:=\Delta|_G$. Consider 
\[
f|_G:G\to f(G)=:S\subseteq T,
\]
and take a Stein factorization of $f|_G$:
\begin{center}
\begin{tikzpicture}[scale=1.5]
\node (S) at (0,0) {$S$.};
\node (G) at (0,2) {$G$};
\node (S') at (1.2,1.1) {$S'$};
\path[->, font=\scriptsize, >=angle 90]
(G) edge node[left]{$f|_G$} (S)
(S') edge (S)
(G) edge (S');
\end{tikzpicture}
\end{center}
\paragraph{\quad Cas 1: $S\neq T$.\\}
$T$ being a simple torus, \cite[Theorem 10.9, pp.~120-123]{Uen75} implies that $S$ is of general type, then so is $S'$ by \cite[Lemma 6.3, p.~66-67]{Uen75}. By {\hyperref[thm_Viehweg_Iitaka-type-gen]{Theorem \ref*{thm_Viehweg_Iitaka-type-gen}}}, for general $s\in S'$ we have
\[
0=\kappa(G,K_G+\Delta_G)=\kappa(G_s\,,K_{G_s}+\Delta_{G_s})+\dim S'=\kappa(G_s\,,K_{G_s}+\Delta_{G_s})+\dim S,
\]
where $\Delta_{G_s}:=\Delta|_{G_s}=\Delta_G|_{G_s}$. This forces $\dim S=\dim S'=0$, hence $f(G)=\text{pt}$, and in consequence $G$ is contained in $F$. Therefore $\phi|_F:F\to h(F)\subseteq Y$ is a Kähler fibre space of general fibre $G$, and thus by the easy inequality \cite[Lemma 5.11, pp.~59-60]{Uen75} we obtain (noting that $\Delta_G=\left.\Delta_F\right|_G$):
\[
\kappa(F,K_F+\Delta_F)\leqslant \kappa(G,K_G+\Delta_G)+\dim h(F)=\dim h(F)\leqslant\dim Y=\kappa(X,K_X+\Delta).
\]

\paragraph{\quad Cas 2: $S=T$.\\}
First we prove that
$S'\to S$ is a finite étale cover (thus $S'$ is also a complex torus) with the help of {\hyperref[thm_Kawamata_Ab-Var]{Theorem \ref*{thm_Kawamata_Ab-Var}}}. In fact, let $\alb_G:G\to \Alb_G$ the Albanese map of $(G,y)$ with base point $y$ such that $f(y)=e\in T$. By the universal property of the Albanese map we get a (unique) morphism $u:\Alb_G\to T$ of complex tori (a morphism of complex analytic Lie groups) making the following diagram commutative:  
\begin{center}
\begin{tikzpicture}[scale=3.0]
\node (T) at (0,0) {$T$};
\node (G) at (0,1) {$G$};
\node (S') at (-0.5,-0.5) {$S'$};
\node (T') at (1,0) {$T'$.};
\node (A) at (1,1) {$\Alb_G$};
\path[->, font=\scriptsize, >=angle 90]
(G) edge node[left]{$f|_G$} (T)
(S') edge (S)
(G) edge[bend right] (S')
(G) edge node[above]{$\alb_G$} (A)
(A) edge (T')
(T') edge (T)
(A) edge node[above left]{$\exists!\;u$} (T)
(S') edge[bend right] node[above left]{$\simeq$} (T');
\end{tikzpicture}
\end{center}
Since $f|_G$ is surjective, then so is $u$. By \cite[Théorème 2.3, p.~7]{Deb99} $u$ can be factorized as $\Alb_G\to T'\to T$ with $\Alb_G\to T'$ the quotient by $\Ker(u)^0$ and $T'\to T$ a finite étale cover. As $\kappa(G,K_G+\Delta_G)=0$, then by {\hyperref[thm_Kawamata_Ab-Var]{Theorem \ref*{thm_Kawamata_Ab-Var}}} the morphism $\alb_G$ is an analytic (Kähler) fibre space, thus so is $G\to T'$. Therefore the construction of Stein factorization implies that $S'$ and $T'$ are isomorphic. In particular, $S'\to T$ is a finite étale cover and thus $S'$ is a complex torus.

Denote by $F'$ the general fibre of $G\to S'$, then for general $t\in T$, we have $G_t\simeq F\cap G$ is finite union of copies of $F'$. Now apply our supposition to $G\to T$ ($\kappa(G,K_G+\Delta_G)=0$) and we get
\[
0=\kappa(G,K_G+\Delta_G)\geqslant\kappa(F',K_{F'}+\Delta_{F'}).
\]
where $\Delta_{F'}:=\Delta|_{F'}=\Delta_G|_{F'}$\,. Furthermore, consider a Stein factorization of $\phi|_F:F\to \phi(F)=:Z\subseteq Y$:
\begin{center}
\begin{tikzpicture}[scale=1.5]
\node (Z) at (0,0) {$Z$.};
\node (F) at (0,2) {$F$};
\node (Z') at (1.2,1.1) {$Z'$};
\path[->, font=\scriptsize, >=angle 90]
(F) edge node[left]{$\phi|_F$} (Z)
(Z') edge (Z)
(F) edge (Z');
\end{tikzpicture}
\end{center}
For $z\in Z$ general $F_z\simeq F\cap G$, hence the general fibre of the analytic fibre space $F\to Z'$ is isomorphic à $F'$. Then by the easy inequality \cite[Lemma 5.11, pp.~59-60]{Uen75} we obtain:
\[
\kappa(F,K_F+\Delta_F)\leqslant\kappa(F',\Delta_{F'}+\Delta_{F'})+\dim Z'\leqslant\dim Z'=\dim Z\leqslant\dim Y=\kappa(X,K_X+\Delta).
\]

\subsection{End of the Proof of the Main Theorem}
\label{ss_demo-main-thm_fin}
By {\hyperref[ss_demo-main-thm_reduction-det=0]{\S \ref*{ss_demo-main-thm_reduction-det=0}}} we have that $(\scrF_{m,\Delta},g_{X/T\!,\Delta}^{(m)})$ is a Hermitian flat vector bundle for every $m\in\scrM$. In other words $\scrF_{m,\Delta}$ is constructed by a unitary representation of the fundamental group (c.f. for example \cite[Proposition 1.4.21, p.~13]{Kob87} or \cite[\S 6, pp.~260-261]{agbook})
\[
\rho_m:\pi_1(T,t_0)\to\Unitary(r_m)
\]
where 
\[
r_m:=\rank\scrF_{m,\Delta}=\dimcoh^0(F,K_F\ptensor[m]\otimes\scrO_F(m\Delta_F)).
\]
Since $\pi_1(T,t_0)$ is an Abelian group, every representation of $\pi_1(T)$ can be decomposed into (irreducible) sub-representations of rank $1$, hence a decomposition of $\scrF_{m,\Delta}$ into (numerically trivial) line bundles:
\begin{equation}
\label{eq_decomp-im-dir-plat}
\scrF_{m,\Delta}=L_1\oplus L_2\oplus\cdots\oplus L_{r_m}\,,\quad\text{with }L_i\in\Pic^0(T),\;\forall i
\end{equation}

\paragraph{\quad Step 1:}
First prove that $\Image(\rho_m)$ is finite for every $m\in\scrM$. In fact, suppose by contradiction that there exists $m\in\scrM$ such that $\Image(\rho_m)$ is infinite, hence there exists $j\in\{1,2\cdots,r_m\}$, say $j=1$, such that $L_j$ is not a torsion point in $\Pic^0(T)$. Consider the natural inclusion $L_1\hookrightarrow \scrF_{m,\Delta}$\,, which induces a non-zero section 
\[
\Coh^0(T,\scrF_{m,\Delta}\otimes L_1\inv)=\Coh^0(X,K_X\ptensor[m]\otimes\scrO_X(m\Delta)\otimes f^\ast L_1\inv).
\]
This implies that $\kappa(X,mK_X+m\Delta+f^\ast L_1)\geqslant 0$. As $f^\ast L_1\in\Pic^0(X)$, by {\hyperref[cor_thm_Simpson_pluri-klt_a]{Corollary \ref*{cor_thm_Simpson_pluri-klt}(a)}} and {\hyperref[ss_demo-main-thm_reduction-kappa<=0]{\S \ref*{ss_demo-main-thm_reduction-kappa<=0}}} we have
\[
\kappa(X,mK_X+m\Delta+f^\ast L_1)\leqslant\kappa(X,K_X+\Delta)\leqslant 0,
\]
hence a fortiori 
\begin{equation}
\label{eq_kappa-equality}
\kappa(X,mK_X+m\Delta+f^\ast L_1)=\kappa(X,K_X+\Delta)=0.
\end{equation}
By {\hyperref[cor_thm_Simpson_pluri-klt_b]{Corollary \ref*{cor_thm_Simpson_pluri-klt}(b)}}, the equality \eqref{eq_kappa-equality} implies that $f^\ast L_1$ is a torsion point in $\Pic^0(X)$, i.e. there is an $e>0$ such that $f^\ast L_1\ptensor[e]\simeq\scrO_X$, meaning that $L_1\ptensor[e]\simeq \scrO_T$ since the morphism
\[
f^\ast:\Pic^0(T)\to\Pic^0(X)
\]
is injective ($f$ being an analytic fibre space). This contradicts our supposition that $L_1$ is not a torsion element in $\Pic^0(T)$. Hence $\Image(\rho_m)$ is finite for each $m\in\scrM$. 

\paragraph{\quad Step 2:}
By the precedent step we see that $\Image(\rho_m)$ is a finite group. Denote 
\[
l_m:=\#\Image(\rho_m).
\]
For every $i\in\{\;1,2,\cdots,r_m\;\}$, consider  the action of $\pi_1(T)$ on $L_i$ via $\rho_m$. For any $t\in T$, the action of $\pi_1(T)$ on $(L_i)_t$ is given by the multiplication by a constant (a $l_m$-th root of unity), hence the action of $\pi_1(T)$ on $(L_i\ptensor[l_m])_t=(L_i)_t\ptensor[l_m]$ is trivial. But the action of $\pi_1(T)$ on $\scrF_{m,\Delta}$ is induced by the parallel transport, hence a vector in $(L_i\ptensor[l_m])_t$ extends via the parallel transport to a section in $\Coh^0(T,L_i\ptensor[l_m])$, c.f. \cite[(6.7) Proposition, p.~261]{agbook}. In particular, for $t\in T$ a general point, choose for every $i\in\{\;1,2,\cdots,r_m\;\}$ a non-zero vector
\[
s_i\in(L_i)_t\subseteq(\scrF_{m,\Delta})_t\simeq\Coh^0(F,K_F\ptensor[m]\otimes\scrO_F(m\Delta_F)), 
\]
then $s_i\ptensor[l_m]\in(L_i\ptensor[l_m])_t$\,, thus $s_i\ptensor[l_m]$ gives rise to a section 
\[
\sigma_i\in\Image\left(\Coh^0(T,\scrF_{m,\Delta}\ptensor[l_m])\to\Coh^0(X,K_X\ptensor[ml_m]\otimes\scrO_X(ml_m\Delta))\right)\subseteq\Coh^0(X,K_X\ptensor[ml_m]\otimes\scrO_X(ml_m\Delta)),
\]
such that
\[
\sigma_i|_{X_t}=\text{image of }s_i\ptensor[l_m]\text{ in }\Coh^0(X_t,K_{X_t}\ptensor[ml_m]\otimes\scrO_{X_t}(ml_m\Delta_t)).
\]
where $\Delta_t:=\Delta|_{X_t}$. In addition, the decomposition \eqref{eq_decomp-im-dir-plat} implies that for $i\neq j$, $s_i\ptensor[l_m]$ and $s_j\ptensor[l_m]$ are linearly independent, then the natural morphism
\[
(\scrF_{m,\Delta}\ptensor[l_m])_t=\Coh^0(X_t,K_{X_t}\ptensor[m]\otimes\scrO_{X_t}(m\Delta_t))\ptensor[l_m]\to\Coh^0(X_t,K_{X_t}\ptensor[ml_m]\otimes\scrO_{X_t}(ml_m\Delta_t)) 
\]
being injective, $\sigma_i$ and $\sigma_j$ are also linearly independent. In consequence, 
\begin{equation}
\label{eq_inegalite_X-F}
\dimcoh^0(X,K_X\ptensor[ml_m]\otimes\scrO_X(ml_m\Delta))\geqslant r_m.
\end{equation}
By {\hyperref[ss_demo-main-thm_reduction-kappa<=0]{\S \ref*{ss_demo-main-thm_reduction-kappa<=0}}} we have $\kappa(X,K_X+\Delta)\leqslant 0$, hence
\[
r_m\leqslant\dimcoh^0(X,K_X\ptensor[ml_m]\otimes\scrO_X(ml_m\Delta))=1.
\]
A fortiori $r_m=1,\;\forall m\in\scrM$. This implies that $\kappa(F,K_F+\Delta_F)=0$. On the other hand, \eqref{eq_inegalite_X-F} implies that $\kappa(X,K_X+\Delta)\geqslant 0$, hence 
\[
\kappa(X,K_X+\Delta)\geqslant\kappa(F,K_F+\Delta_F).
\]

\section{Geometric Orbifold Version of the Main Results}
\label{sec_orbifold}
In this last section, we will prove {\hyperref[main-thm_orbifold]{Theorem \ref*{main-thm_orbifold}}}, in other word, generalize {\hyperref[main-thm_II]{Part (I\!I)}} of our {\hyperref[main-thm]{Main Theorem}}, established in {\hyperref[sec_demo-main-thm]{\S \ref*{sec_demo-main-thm}}}, to the geometric orbifold setting. Along the way, we also show that $C_{n,m}^{\orbifold}$ holds when $(Y,B_{f\!,\Delta})$ is of log general type. Before entering into the proof of theses results, let us first clarify some definitions. Remind that for $f:X\to Y$ analytic fibre space between compact complex manifolds and for $\Delta$ effective $\QQ$-divisor on $X$, the branching divisor $B_{f\!,\Delta}$ is defined as the most effective $\QQ$-divisor on $Y$ such that $f^\ast B_{f\!,\Delta}\leqslant R_{f\!,\Delta}$ modulo exceptional divisors (see below, c.f. also {\hyperref[sec_intro]{Introduction}}); on the other hand, in \cite[Definition 1.29]{Cam04} Frédéric Campana defines a divisor on $Y$ with respect to $f$ and $\Delta$in the setting of geometric orbifolds, named "orbifold base". We will see in the sequel that these two definitions coincide when $(X,\Delta)$ is lc. Let us first recall the definition of Campana: 

\begin{defn}
\label{defn_orbifold-base}
Let $f:X\to Y$ and $\Delta$ as above such that $(X,\Delta)$ is lc. For any prime divisor $G$ on $Y$, write 
\[
f^\ast G=\sum_{j\in J(f\!,G)} \Ramification_{G_j}(f)G_j+(f\text{-exceptional divisor})\,,
\]
where $J(f\!,G)$ is the index set of all prime divisors mapped onto $G$. Then the orbifold base with respect to $f$ and $\Delta$ is defined to be the $\QQ$-divisor
\[
B_{f\!,\Delta}:=\sum_G \left(1-\frac{1}{m(f\!,\Delta;G)}\right)G
\]
where the multiplicity $m(f\!,\Delta;G)$ of $G$ with respect to $f$ and $\Delta$ is defined to be
\[
m(f\!,\Delta;G):=\inf\left\{\,\Ramification_{G_j}(f)m(\Delta;G_j)\,\big|\,j\in J(f\!,G)\,\right\}
\]
with $m(\Delta;G_j)\in\QQ_{\geqslant 1}\cup\{+\infty\}$ satisfying
\[
\order_{G_j}(\Delta)=1-\frac{1}{m(\Delta;G_j)}\,.
\]
\end{defn}
 
Now we have:

\begin{lemme}
\label{lemme_discriminant:orbifold-base}
Let $f:X\to Y$ and $\Delta$ as above such that $(X,\Delta)$ is lc.Let $B_{f\!,\Delta}$ be the orbifold base respect to $f$ and $\Delta$ in the sense of Campana, as defined in {\hyperref[defn_orbifold-base]{Definition \ref*{defn_orbifold-base}}} above. Then there is an $f$-exceptional effective $\QQ$-divisor $E$ such that the $\QQ$-divisor $R_{f\!,\Delta}+E-f^\ast B_{f\!,\Delta}$ is effective; and $B_{f\!,\Delta}$ is the most effective $\QQ$-divisor on $Y$ satisfying this property. 
%where the divisor $\Sigma_f$ is defined in {\hyperref[prop_Bergman-discriminant]{Proposition \ref*{prop_Bergman-discriminant}}} and $\Delta^{\vertical}$ denotes the vertical part of $\Delta$ (c.f. \cite[\S 2.1.C, p.~138]{Laz04}).
\end{lemme}

\begin{proof}
%[Proof of {\hyperref[lemme_discriminant:orbifold-base]{Lemma \ref*{lemme_discriminant:orbifold-base}}}]
The second assertion is evident by construction of $B_{f\!,\Delta}$. In fact, if $B$ is a divisor on $Y$ such that $f^\ast B\leqslant R_{f\!,\Delta}$\,, then for every prime divisor $G$ on $Y$ we have
\begin{align*}
\order_{G_j}(f^\ast B)=\Ramification_{G_j}(f)\order_G(B)\leqslant \order_{G_j}(R_{f\!,\Delta}) &=\Ramification_{G_j}(f)-1+\order_{G_j}(\Delta) \\
&=\Ramification_{G_j}(f)-\frac{1}{m(\Delta;G_j)}\,,\;\forall j\in J(f\!,G)\,,
\end{align*}
where 
\[
f^\ast G=\sum_{j\in J(f\!,G)}\Ramification_{G_j}G_j+(f-\text{exceptional divisor})\,;
\]
this implies that 
\[
\order_G(B)\leqslant 1-\frac{1}{\Ramification_{G_j}(f)m(\Delta;G_j)}\,,\;\forall j\in J(f\!,G)\,,
\]
and hence
\begin{align*}
\order_G(B)\leqslant \inf_{j\in J(f\!,G)}\left(1-\frac{1}{\Ramification_{G_j}(f)m(\Delta;G_j)}\right) &=1-\frac{1}{\inf\left\{\,\Ramification_{G_j}(f)m(\Delta;G_j)\,\big|\,j\in J(f\!,G)\,\right\}} \\
&=\order_G(B_{f\!,\Delta}).
\end{align*}

Now turn to the proof of the first assertion. To this end, it suffices to show that for any prime divisor $D$ on $X$ such that $f(D)$ is a divisor on $Y$ we have
%\[
%\order_D(\Sigma_f)+\order_D(\Delta^{\vertical})\geqslant \order_D(f^\ast B_{f\!,\Delta}).
%\]
%Since $D$ is a vertical divisor w.r.t. $f$, the inequality above is equivalent to
\begin{equation}
\label{eq_inequality-discriminant:orbifold-base}
\order_D(R_{f\!,\Delta})=\order_D(\Sigma_f)+\order_D(\Delta)\geqslant \order_D(f^\ast B_{f\!,\Delta}).
\end{equation}
Let $\Sigma_Y$ be a (reduced) divisor containing $Y\backslash Y_0$ with $Y_0\subset Y$ the smooth locus of $f$ and write 
\[
f^\ast\Sigma_Y=\sum_{i\in I}b_i W_i\,,
\]
then 
\[
\Sigma_f:=\sum_{i\in I^{\divisor}}(b_i-1)W_i.
\]
where $I^{\divisor}$ denotes the set of indices in $I$ such that $f(W_i)$ is a divisor on $Y$. Now we consider separately the two cases:
\subparagraph{Case 1 :} $D\not\subset\Supp(\Sigma_f)$. Then $\order_D(\Sigma_f)=0$ and a general point of $f(D)$ is contained in $Y_0$\,, thus 
\[
f^\ast f(D)=D+(f\text{-exceptional divisor}).
\]
In consequence $\Ramification_D(f)=1$ and $J(f\!,f(D))=\{D\}$, which implies that $m(f\!,\Delta;f(D))=m(\Delta;D)$. Hence
\[
\order_D(f^\ast B_{f\!,\Delta})=\order_{f(D)}(B_{f\!,\Delta})=1-\frac{1}{m(\Delta;D)}=\order_D(\Delta)=\order_D(\Sigma_f)+\order_D(\Delta).
\]

\subparagraph{Case 2 :} $D\subset\Supp(\Sigma_f)$. Then $D=W_i$ for some $i\in I^{\divisor}$. In consequence, $f(W_i)\subset \Supp(\Sigma_Y)$ and
\[
f^\ast f(W_i)=\sum_{j\in J(f\!,f(W_i))}b_j W_j+(f\text{-exceptional divisor}),
\]
with $J(f\!,f(W_i))=\left\{\,j\in I^{\divisor}\,\big|\,f(W_j)=f(W_i)\,\right\}$ and $\Ramification_{W_j}(f)=b_j$. By definition we have
\[
m(f\!,\Delta;f(W_i))=\inf\left\{\,b_jm(\Delta;W_j)\,\big|\,j\in I^{\divisor}\text{ and }f(W_j)=f(W_i)\,\right\}\leqslant b_im(\Delta;W_i).
\]
Hence
\begin{align*}
\order_{W_i}(f^\ast B_{f\!,\Delta}) &=b_i\cdot\order_{f(W_i)}(B_{f\!,\Delta})=b_i\left(1-\frac{1}{m(f\!,\Delta;f(W_i))}\right) \\
&\leqslant 1-\frac{1}{b_im(\Delta;W_i)}
=(b_i-1)+(1-\frac{1}{m(\Delta;W_i)}) \\
&=\order_{W_i}(\Sigma_f)+\order_{W_i}(\Delta).
\end{align*}

In both cases, the inequality \eqref{eq_inequality-discriminant:orbifold-base} is established for prime divisor $D$ vertical w.r.t. $f$, hence the proof is proved.
\end{proof}

\begin{rmq}
\label{rmq_lemme_discriminant:orbifold-base}
As a corollary of the above lemma, one sees clearly:
\begin{itemize}
\item $f^\ast B_{f\!,\Delta}$ being a vertical divisor w.r.t. $f$ (i.e. not dominating $Y$), it is in fact the most effective divisor on $Y$ such that $f^\ast B_{f\!,\Delta}\leqslant R_{f\!,\Delta^{\vertical}}=\Sigma_f+\Delta^{\vertical}$ where $\Delta^{\vertical}$ denotes the vertical part of $\Delta$.
\item If $(X,\Delta)$ is klt and $\scrF_{m,\Delta}:=f_\ast\left(K_{(X,\Delta)/Y}\ptensor[m]\right)\neq 0$ for some $m$ sufficiently large and divisible, one can easily deduce from {\hyperref[prop_Bergman-discriminant]{Proposition \ref*{prop_Bergman-discriminant}}} (applied to $L=\scrO_X(m\Delta^{\horizontal})$ with $\Delta^{\horizontal}$ the horizontal part of $\Delta$) that there is an $f$-exceptional effective $\QQ$-divisor $E$ such that the $\QQ$-line bundle $K_{f\!,\Delta}^{\orbifold}+E$ is pseudoeffective, where the orbifold relative canonical bundle is defined (as a $\QQ$-line bundle) by the formula: 
\[
K_{f\!,\Delta}^{\orbifold}:=K_{(X,\Delta)/(Y,B_{f\!,\Delta})}=K_{X/Y}+\Delta-f^\ast B_{f\!,\Delta}.
\]
\end{itemize}
\end{rmq}

Before proving the {\hyperref[main-thm_orbifold]{Theorem \ref*{main-thm_orbifold}}}, let us first prove the result that the klt version of $C_{n,m}^{\orbifold}$ holds for fibre spaces over bases of general type in the sense of geometric orbifolds:
\begin{thm}
\label{thm_Iitaka-orbifold-type-gen}
Let $f:X\to Y$ be a surjective morphism between compact Kähler manifolds whose general fibre $F$ is connected. Let $\Delta$ be an effective $\QQ$-divisor on $X$ such that $(X,\Delta)$ is klt. Suppose that $(Y,B_{f\!,\Delta})$ is of log general type. Then 
\[
\kappa(X,K_X+\Delta)\geqslant \kappa(F, K_F+\Delta_F)+\dim Y,
\]
where $\Delta_F:=\Delta|_F$.
\end{thm}

Notice that a stronger (log canonical) version of the above theorem is proved in \cite{Cam04} (for $X$ projective) based on the a weak positivity theorem for direct images of twisted pluricanonical bundles. We will give here a new argument depending on the Ohsawa-Takegoshi extension theorem:
\begin{proof}[Proof of {\hyperref[thm_Iitaka-orbifold-type-gen]{Theorem \ref*{thm_Iitaka-orbifold-type-gen}}}]
First, as in the proof of {\hyperref[thm_Viehweg_Iitaka-det-gros]{Theorem \ref*{thm_Viehweg_Iitaka-det-gros}}}, by passing to a higher bimeromorphic model of $f$, we can assume that $f$ is neat and prepared (in virtue of {\hyperref[lemme_Viehweg-aplatissement]{Lemma \ref*{lemme_Viehweg-aplatissement}}} and {\hyperref[lemme_preservation-klt]{Lemma \ref*{lemme_preservation-klt}}}), that is, every $f$-exceptional divisor is also exceptional with respect to some bimeromorphic morphism $X\to X'$ and the the singular locus of $f$ is a (reduced) SNC divisor; in particular, for every effective $f$-exceptional divisor $E_0$ on $X$, we have $\kappa(X,K_X+\Delta)=\kappa(X,K_X+\Delta+E_0)$.

If $\kappa(F,K_F+\Delta_F)=-\infty$ then there is nothing to prove, hence suppose that $\kappa(F,K_F+\Delta_F)\geqslant 0$, this implies that there is $m>0$ sufficiently large and divisible such that $\scrF_{m,\Delta}:=f_\ast\left(K_{(X,\Delta)/Y}\ptensor[m]\right)\neq 0$. By {\hyperref[rmq_lemme_discriminant:orbifold-base]{Remark \ref*{rmq_lemme_discriminant:orbifold-base}}}, there is an effective $f$-exceptional $\QQ$-divisor $E$ such that the $\QQ$-line bundle $K_{f\!,\Delta}^{\orbifold}+E$ is pseudoeffective. Since $(Y,B_{f\!,\Delta})$ is of log general type, $Y$ is projective, one can fix a very ample line bundle $A_Y$ on $Y$ such that the $\QQ$-line bundle $A_Y-K_Y-B_{f\!,\Delta}$ is ample and that the Seshadri constant $\epsilon(A_Y-K_Y-B_{f\!,\Delta}\,,y)>\dim Y$ for general $y$. Now by our hypothesis $K_Y+B_{f\!,\Delta}$ is a big $\QQ$-line bundle, then (up to replacing $m$ by a multiple) we can assume that $m(K_Y+B_{f\!,\Delta})-2A_Y$ is effective. Then we have 
\[
\kappa(X,K_X+\Delta)=\kappa(X,K_X+\Delta+E)\geqslant\kappa(X,mK_{f\!,\Delta}^{\orbifold}+mE+2f^\ast A_Y).
\]
In virtue of {\hyperref[lemme_kod-eff+pull-ample]{Lemma \ref*{lemme_kod-eff+pull-ample}}} it suffices to show that
\[
\Coh^0(X, (K_{f\!,\Delta}^{\orbifold})\ptensor[m]\otimes\scrO_X(mE)\otimes f^\ast A_Y)\neq 0,
\]
which is a direct consequence of the Ohsawa-Takegoshi type extension {\hyperref[thm_Deng_OT]{Theorem \ref*{thm_Deng_OT}}}, as we precise below:

Since $\Delta$ is klt, by {\hyperref[thm_Cao_Bergman]{Theorem \ref*{thm_Cao_Bergman}}} the relative $m$-Bergman kernel metric $h_{X/Y\!,m\Delta^{\horizontal}}$ on $K_{X/Y}\ptensor[m]\otimes\scrO_X(m\Delta^{\horizontal})$ is semipositive (noting that $\Delta^{\horizontal}\big|_F=\Delta_F$). Set
\begin{align*}
L_{m-1} &:=K_{X/Y}\ptensor[(m-1)]\otimes\scrO_X(m\Delta^{\horizontal}), \\
L'_{m-1} &:=L_{m-1}\otimes\scrO_X(mE+m\Delta^{\vertical}-(m-1)f^\ast B_{f\!,\Delta}), 
\end{align*}
respectively equipped with the singular Hermitian metrics:
\begin{align*}
h_{L_{m-1}} &:=\left(h_{X/Y\!,m\Delta^{\horizontal}}^{(m)}\right)\ptensor[\frac{m-1}{m}]\otimes h_{\Delta^{\horizontal}}\,, \\
h_{L'_{m-1}} &:=h_{L_{m-1}}\otimes h_E\ptensor[m]\otimes h_{\Delta^{\vertical}}\ptensor[m]\otimes f^\ast h_{B_{f\!,\Delta}}\ptensor[-(m-1)]\,.
\end{align*}
where $h_{\Delta^{\horizontal}}$\,, $h_{\Delta^{\vertical}}$\,, $h_E$ and $h_{B_{f\!,\Delta}}$ denote the canonical singular metrics defined by the divisors. Then by {\hyperref[prop_Bergman-discriminant]{Proposition \ref*{prop_Bergman-discriminant}}} and {\hyperref[lemme_discriminant:orbifold-base]{Lemma \ref*{lemme_discriminant:orbifold-base}}} the curvature current of $h_{L'_{m-1}}$ satisfies 
\begin{align*}
\Theta_{h_{L'_{m-1}}}(L'_{m-1}) &= \frac{m-1}{m}\Theta_{h_{X/Y\!,m\Delta^{\horizontal}}}(K_{X/Y}\ptensor[m]\otimes\scrO_X(m\Delta^{\horizontal})+[\Delta]+(m-1)[\Delta^{\vertical}] \\
&\quad +m[E]-(m-1)[f^\ast B_{f\!,\Delta}] \\
&\geqslant (m-1)\left([\Sigma_f]+[E]+[\Delta^{\vertical}]-[f^\ast B_{f\!,\Delta}]\right)+[\Delta]+[E] \\
&\geqslant [\Delta]+[E]\geqslant 0.
\end{align*}
Moreover, since $L'_{m-1}|_F=L_{m-1}|_F$ and $h_{L'_{m-1}}|_F=h_{L_{m-1}}|_F$\,, by {\hyperref[rmq_klt-gen-iso]{Remark \ref*{rmq_klt-gen-iso}}} the natural inclusion
\begin{align*}
\Coh^0(F,K_F\otimes L'_{m-1}|_F\otimes\scrJ(h_{L'_{m-1}}|_F)) &=\Coh^0(F,K_F\otimes L_{m-1}|_F\otimes\scrJ(h_{L_{m-1}}|_F)) \\
&\hookrightarrow \Coh^0(F,K_F\otimes L_{m-1}|_F)=\Coh^0(F,K_F\ptensor[m]\otimes \scrO_F(m\Delta_F))
\end{align*}
is an isomorphism (and both of them are consequently non-vanishing). Hence by {\hyperref[thm_Deng_OT]{Theorem \ref*{thm_Deng_OT}}} we get a surjection
\[
\Coh^0(K_X\otimes L'_{m-1}\otimes f^\ast(A_Y\otimes K_{(Y,B_{f\!,\Delta})}\inv )\twoheadrightarrow\Coh^0(F, K_F\ptensor[m]\otimes\scrO_F(m\Delta_F)).
\]
Since 
\[
K_X\otimes L'_{m-1}\otimes f^\ast(A_Y\otimes K_{(Y,B_{f\!,\Delta})}\inv)=(K_{f\!,\Delta}^{\orbifold})\ptensor[m]\otimes\scrO_X(mE)\otimes f^\ast A_Y\,,
\]
this proves the non-vanishing of $\Coh^0((K_{f\!,\Delta}^{\orbifold})\ptensor[m]\otimes\scrO_X(mE)\otimes f^\ast A_Y)$.
\end{proof}

Finally, let us turn to the proof of {\hyperref[main-thm_orbifold]{Theorem \ref*{main-thm_orbifold}}}:
\begin{proof}[Proof of {\hyperref[main-thm_orbifold]{Theorem \ref*{main-thm_orbifold}}}]
Let us proceed by an induction on $\dim T$. If $B_{f,\Delta}=0$, then {\hyperref[main-thm_orbifold]{Theorem \ref*{main-thm_orbifold}}} is reduced to {\hyperref[main-thm_II]{Part (I\!I)}} of the {\hyperref[main-thm]{Main Theorem}}. Hence we assume that $B_{f\!,\Delta}\neq 0$. Then by \cite[Proposition 2.2]{Cao15b}, there is a subtorus $S$ of $T$ of dimension $<\dim T$ and an ample $\QQ$-divisor $H$ on $A:T/S$ such that $\pi^\ast H=B_{f\!,\Delta}$ with $\pi: T\to A=T/S$ the quotient map. 

Now let $f'=\pi\circ f:X\to A$, which is a fibre space with general fibre $F'$. Then $f|_{F'}:F'\to S$ is a fibre space with general fibre $F$. We have $B_{f|_{F'},\Delta_{F'}}\geqslant (B_{f\!,\Delta})|_S$, as one can easily check: for every component $G$ of $(B_{f\!,\Delta})|_S$, it arises from a prime divisor of $X$, hence $B_{f|_{F'},\Delta_{F'}}$ has the same vanishing order over $G$. This is enough for our use; we nevertheless remark that we have in fact the equality $B_{f|_{F'},\Delta_{F'}}=(B_{f\!,\Delta})|_S$ since every component of $B_{f|_{F'},\Delta_{F'}}$ must arise from a divisor on $X$: in fact, every component of $B_{f|_{F'},\Delta_{F'}}$ is either the image of a component of $\Delta_{F'}=\Delta|_{F'}$ or the image of a component of $\Sigma_{f|_{F'}}=(\Sigma_f)|_{F'}$ (we have the equality if we choose $S$ to be a general translate). Now the induction hypothesis gives:
\[
\kappa(F',K_{F'}+\Delta_{F'})\geqslant\kappa(F,K_F+\Delta_F)+\kappa(S,(B_{f\!,\Delta})|_S).
\]
Furthermore, since $\kappa(S,(B_{f\!,\Delta)|_S})\geqslant 0$, we have 
\begin{equation}
\label{eq_Iitaka-orb_ind-hyp}
\kappa(F',K_{F'}+\Delta_{F'})\geqslant\kappa(F,K_F+\Delta_F).
\end{equation}

We claim that 
\begin{equation}
\label{eq_Iitaka-orb_quotient}
\kappa(X,K_X+\Delta)\geqslant\kappa(F',K_{F'}+\Delta_{F'})+\dim A.
\end{equation}
If $\kappa(F',K_{F'}+\Delta_{F'})=-\infty$, then \eqref{eq_Iitaka-orb_quotient} evidently holds. Hence we can assume that $\kappa(F',K_{F'}+\Delta_{F'})\geqslant 0$. In this case, for $m$ sufficiently large and divisible, 
\[
\Coh^0(F,K_{F'}\ptensor[m]\otimes\scrO_{F'}(m\Delta_{F'}))\neq 0.
\]
Since $(X,\Delta)$ is klt, $(F',\Delta_{F'})$ is klt, then by {\hyperref[thm_Cao_Bergman]{Theorem \ref*{thm_Cao_Bergman}}} we can construct the relative $2m$-Bergman kernel metric $h_{X/A,2m\Delta^{\horizontal}}^{(2m)}$ on $K_{X/A}\ptensor[2m]\otimes \scrO_X(2m\Delta^{\horizontal})\simeq K_X\ptensor[2m]\otimes \scrO_X(2m\Delta^{\horizontal})$. Now put \[
L:=K_X\ptensor[(2m-1)]\otimes \scrO_X(2m\Delta+2mE-m(f')^\ast H)
\] 
equipped with the singular Hermitian metric
\[
h_L:=\left(h_{X/A,2m\Delta^{\horizontal}}^{(2m)}\right)\ptensor[\frac{2m-1}{2m}]\otimes h_{\Delta^{\horizontal}}\otimes h_{\Delta^{\vertical}}\ptensor[2m]\otimes h_E\ptensor[2m]\otimes(f')^\ast h_H\ptensor[-m]\,,
\]
where $E$ is an $f$-exceptional effective divisor as in {\hyperref[lemme_discriminant:orbifold-base]{Lemma \ref*{lemme_discriminant:orbifold-base}}} and $h_{\Delta^{\horizontal}}$\,, $h_{\Delta^{\vertical}}$\,, $h_E$ and $h_H$ are the canonical singular metrics defined by the divisors. Then by {\hyperref[prop_Bergman-discriminant]{Proposition \ref*{prop_Bergman-discriminant}}} and {\hyperref[lemme_discriminant:orbifold-base]{Lemma \ref*{lemme_discriminant:orbifold-base}}} the curvature current of $h_L$ satisfies
\begin{align*}
\Theta_{h_L}(L) &=\frac{2m-1}{2m}\Theta_{h^{(2m)}_{X/A,\Delta^{\horizontal}}}\!\!\left(K_{X/A}\ptensor[2m]\otimes\scrO_X(2m\Delta^{\horizontal})\right)+[\Delta^{\horizontal}]+2m[\Delta^{\vertical}]+2m[E]-m[(f')^\ast H] \\
&\geqslant (2m-1)[\Sigma_f]+[\Delta^{\horizontal}]+2m[\Delta^{\vertical}]+2m[E]-m[f^\ast B_{f\!,\Delta}] \\
&= [\Delta]+[E]+(m-1)([\Sigma_f]+[\Delta^{\vertical}]+[E])+m([\Sigma_f]+[\Delta^{\vertical}]+[E]-[f^\ast B_{f\!,\Delta}]) \\
&\geqslant [\Delta]+[E]+(m-1)([\Sigma_f]+[\Delta^{\vertical}]+[E])\geqslant 0.
\end{align*}
Since $h_L|_F=h_{L_{2m-1}}|_F$, where 
$L_{2m-1}:=K_X\ptensor[(2m-1)]\otimes\scrO_X(2m\Delta^{\horizontal})$ equipped with the singular metric 
\[
h_{L_{2m-1}}:=\left(h_{X/A,\Delta^{\horizontal}}^{(2m)}\right)\ptensor[\frac{2m-1}{2m}]\otimes h_{\Delta^{\horizontal}},
\]
then by {\hyperref[lemme_gen-iso_im-pluri-can]{Lemma \ref*{lemme_gen-iso_im-pluri-can}}} we see that the natural inclusion 
\[
f'_\ast\left(K_{X/A}\otimes L\otimes\scrJ(h_L)\right)\hookrightarrow f'_\ast(K_{X/A}\otimes L)
\]
is generically an isomorphism, hence by {\hyperref[thm_pos_im-dir]{Theorem \ref*{thm_pos_im-dir}}} the canonical $L^2$ metric on 
\[
f'_\ast(K_{X/A}\otimes L)=f'_\ast(K_{X/A}\ptensor[2m]\otimes\scrO_X(2m\Delta+2mE))\otimes H\ptensor[-m]
\]
is semi-positively curved. In particular its determinant is pseudoeffective, which implies that $\det\!f'_\ast(K_{X/A}\ptensor[2m]\otimes\scrO_X(2m\Delta+2mE))$ is big on $A$. Since $f'_\ast(K_{X/A}\ptensor[2m]\otimes\scrO_X(2m\Delta+2mE))$ and $f'_\ast(K_{X/A}\ptensor[2m]\otimes\scrO_X(2m\Delta))$ are equal in codimension $1$, hence 
\[
\det\!f'_\ast(K_{X/A}\ptensor[2m]\otimes\scrO_X(2m\Delta+2mE))=\det\!f'_\ast(K_{X/A}\ptensor[2m]\otimes\scrO_X(2m\Delta)),
\]
implying that $\det\!f'_\ast(K_{X/A}\ptensor[2m]\otimes\scrO_X(2m\Delta))$ is big on $A$. Since $\kappa(A)=0$, \eqref{eq_Iitaka-orb_quotient} results from {\hyperref[thm_Viehweg_Iitaka-det-gros]{Theorem \ref*{thm_Viehweg_Iitaka-det-gros}}}.

At last, by combining \eqref{eq_Iitaka-orb_ind-hyp} and \eqref{eq_Iitaka-orb_quotient} with the easy inequality \cite[Theorem 5.11, pp.~59-60]{Uen75} (applied to $\pi:T\to A$) we obtain:
\begin{align*}
\kappa(X,\Delta+X) &\geqslant \kappa(F',K_{F'}+\Delta_{F'})+\dim A\geqslant \kappa(F,K_F+\Delta_F)+\kappa(S,(B_{f\!,\Delta})|_S)+\dim A \\
&\geqslant \kappa(F,K_F+\Delta_F)+\kappa(T,B_{f\!,\Delta}).
\end{align*}
\end{proof}

\begin{appendices}
\section{Proof of the \texorpdfstring{\hyperref[lemme_neg]{Negativity Lemma}}{text}}
\label{appendix_sec_neg-lemma}
In this appendix we are engaged to prove the {\hyperref[lemme_neg]{Negativity Lemma}} in {\hyperref[ss_preliminary_toolkit]{\S \ref*{ss_preliminary_toolkit}}}. Let us recall the statement: $h: Z\to Y$ being a proper bimeromorphic morphism between normal complex varieties and $B$ being a Cartier divisor on $Z$ such that $-B$ is $h$-nef, we will prove that $B$ is effective if and only if $h_\ast B$ is effective. First notice that if $B$ is effective, then $h_\ast B$ is effective; hence it remains to show that $h_\ast B$ is effective $\Rightarrow$ $B$ is effective. To this end we proceed in three steps:
\subparagraph{(A) Reduction to the case where $h$ is a sequence of blow-ups with smooth centres \\}
\label{lemme-neg_demo-A}
For any proper bimeromorphic morphism $f:Z'\to Z$, $B$ is effective $\Leftrightarrow$ $f^\ast B$ is effective; moreover, if we note $h'=h\circ f$\,, then $h'_\ast f^\ast B=h_\ast B$ and $-f^\ast B$ is $h'$-nef. This observation gives us the flexibility to replace $Z$ with a higher bimeromorphic model. In particular, by Chow's Lemma (\cite[Corollary 2]{Hir75}) we can suppose that $h$ is projective. In addition, by Hironaka's construction in \cite{Hir75} we see that $h$ is in fact the blow-up of an analytic subspace (a coherent ideal) of $X$ (c.f. \cite[Definition 4.1]{Hir75}); hence by Hironaka's resolution of singularities, we can take a principalization $h'$ of this ideal, which is constructed by a sequence of blow-ups with smooth centres, by the universal property of blow-ups, $h'$ dominates $h$. C.f. also \cite[Lemma 4.1]{BJ17}. Now up to replacing $h'$ with $h$, we can assume that $h$ is a locally  finite (over $Y$) sequence of blow-ups with smooth centres; moreover the problem being local over $Y$, one can further assume that $h$ is a finite sequence. In particular, (e.g. by an induction on the number of blow-ups contained in $h$)
there exists an effective Cartier divisor $h$-exceptional divisor $A$ such that $-A$ is $h$-ample.

\subparagraph{(B) Reduction to the case where $-B$ is $h$-ample by an approximation argument \\}
\label{lemme-neg_demo-B}
In this step we use an approximation argument to reduce to the case where $-B$ is $h$-ample. To this end, assume that the lemma is true for $h$-anti-ample divisors. By {\hyperref[lemme-neg_demo-A]{Step (A)}}, one gets an $h$-exceptional divisor $A$ such that $-A$ is $h$-ample. Since $h_\ast A=0$, our assumption implies that $A$ is effective. For every $m>0$, the Cartier divisor$-mB-A$ is $h$-ample; in addition, $h_\ast(mB+A)=mh_\ast B\geqslant 0$, hence by our assumption, $mB+A$ is effective. Letting $m$ tend to $+\infty$ we obtain that $B$ is effective\footnote{In fact, the coefficients of the $\QQ$-divisor $$B-\frac{1}{m}A$$ are all $\geqslant 0$ for every $m>0$, let $m\to+\infty$ we see that the coefficients of $B$ are $\geqslant 0$, i.e. $B$ is effective.}. 

\subparagraph{(C) The case where $-B$ is $h$-ample \\}
\label{lemme-neg_demo-C}
By the reduction procedures {\hyperref[lemme-neg_demo-A]{(A)}} and {\hyperref[lemme-neg_demo-B]{(B)}}, we can suppose that $h$ is projective and that $B$ is a Cartier divisor on $Z$ such that $-B$ is $h$-ample. Since $-B$ is $h$-ample, then for any $m>>0$, the Cartier divisor $-mB$ is relatively globally generated, i.e. we have an surjection
\[
h^\ast h_\ast\scrO_Z(-mB)\twoheadrightarrow\scrO_Z(-mB)\,.
\]
In particular, $\scrO_Z(-mB)=h\inv\!\fraka_m\cdot\scrO_Y$ where $\fraka_m=h_\ast\scrO_Z(-mB)$ fractional ideal on $Y$ (i.e. a torsion free subsheaf of rank $1$ of $\scrM_Y$ the sheaf of germs of meromorphic functions on $Y$) since $h$ is bimeromorphic. It remains to see that $\fraka_m$ is an authentic ideal. To this end it suffices to consider the inclusion (by hypothesis $h_\ast B$ is effective)
\[
\fraka_m=h_\ast\scrO_Z(-mB)\subseteq\scrO_Y(-mh_\ast B)\subseteq\scrO_Y\,,
\]
where the inclusion $h_\ast\scrO_Z(-mB)\subseteq\scrO_Y(-mh_\ast B)$ above results from {\hyperref[lemme_incl-div-birat]{Lemma \ref*{lemme_incl-div-birat}}}.
\section{Proof of  \texorpdfstring{\hyperref[prop_div-exc-non-rel-psef]{Proposition \ref* {prop_div-exc-non-rel-psef}}}{text}}
\label{appendix_sec_ref-hull}
In this appendix, we give the detailed proof of {\hyperref[prop_div-exc-non-rel-psef]{Proposition \ref*{prop_div-exc-non-rel-psef}}} which serves to complete the proof of  {\hyperref[thm_env-ref]{Theorem \ref*{thm_env-ref}}}\,. Let $X\to S$ be a surjective morphism between complex varieties with $X$ smooth and $S$ normal, we will show that there is an effective $\pi$-exceptional divisor $E$ such that for any $\pi$-exceptional prime divisor $\Gamma$, $E|_{\Gamma}$ is not $\pi|_\Gamma$-pseudoeffective.

The starting point of the proof is the following observation: if $\pi$ is flat, then $\pi_\ast L$ is always reflexive. Consider thus a flattening of $\pi$ (c.f. \cite{Hir75}, or for the algebraic case, \cite[\S 4.1, Theorem 1, p.~26]{Ray72}): let $\nu: S'\to S$ be a projective bimeromorphic morphism (a sequence of blow-ups with smooth centres) which flattens $\pi$ and let $X'$ be the normalization of the main component of $X\underset{\scriptscriptstyle S}{\times}S'$ equipped with morphisms $X'\xrightarrow{\mu}X$ and $X'\xrightarrow{\phi}S'$ ($\mu$ is projective and $\phi$ equidimensional).
\begin{center}
\begin{tikzpicture}[scale=3.0]
\node (A) at (0,0) {$S$};
\node (B) at (0,1) {$X$};
\node (A') at (-1,0) {$S'$};
\node (B') at (-1,1) {$X\underset{\scriptscriptstyle S}{\times}S'$};
\node (C) at (-1.5,1.5) {$X'$};
\node (S) at (-0.5,0.5) {$\square$};
\path[->,>=angle 90]
(B') edge (A')
(B') edge (B)
(C) edge (B');
\path[->, font=\scriptsize,>=angle 90]
(B) edge node[right]{$\pi$} (A)
(A') edge node[below]{$\nu$} (A)
(C) edge[bend left] node[above right]{$\mu$} (B)
(C) edge[bend right] node[below left]{$\phi$} (A');
\end{tikzpicture}
\end{center}
By the construction of $\nu$, there is a  $\pi$-exceptional effective (Cartier) divisor $\Delta$ such that $-\Delta$ is $\nu$-ample. Consider the divisor $E:=\mu_\ast(\phi^\ast\Delta)$. Then $E$ is effective since $\Delta$ is effective; $E$ is Cartier since $X$ is smooth. Moreover, $-\Delta$ is $\nu$-ample, hence $-\phi^\ast\Delta$ is $\mu$-nef: in fact, let $C$ be a curve contracted by $\mu$, then $\phi_\ast C$ (which is, by definition, a curve on $S'$ if $C$ is not contracted by $\phi$ or is equal to $0$ otherwise) is contracted by $\nu$ since $\pi\circ\mu=\nu\circ\phi$, hence by the projection formula we get
\[
(-\phi^\ast\Delta\cdot C)=(-\Delta\cdot\phi_\ast C)\geqslant 0,
\]
$\mu$ being projective, this implies that $-\phi^\ast\Delta$ is $\mu$-nef; then so is $\mu^\ast E-\phi^\ast\Delta$. Now since 
\[
\mu_\ast(\mu^\ast E-\phi^\ast\Delta)=E-E=0,
\]
then we have $\mu^\ast E-\phi^\ast\Delta\leqslant 0$ by the {\hyperref[lemme_neg]{Negativity Lemma}}. 

Assume by contradiction that there exists a $\pi$-exceptional prime divisor $\Gamma$ such that $E|_{\Gamma}$ is $\pi|_{\Gamma}$-pseudoeffective and denote 
\[
\Gamma':=\text{ the strict transformation of }\Gamma\text{ by }\mu\inv.
\]

\begin{center}
\begin{tikzpicture}[scale=3.0]
\node (A) at (0,0) {$S$};
\node (B) at (0,1) {$X$};
\node (A') at (-1,0) {$S'$};
\node (B') at (-1,1) {$X'$};
\node (C) at (0.25,1.25) {$\Gamma$};
\node (C') at (-1.25,1.25) {$\Gamma'$};
%\node (S) at (-0.5,0.5) {$\square$};
\node (D) at (0.25,-0.25) {$\pi(\Gamma)$};
\node (D') at (-1.25,-0.25) {$\phi(\Gamma')$};
%\node (E) at (0.6,1.6) {$\Gamma_s$};
%\node (E') at (-1.6,1.6) {$(\Gamma')_s$};
%\node (F) at (0.6,-0.6) {$s$};
%\node (F') at (-1.6,-0.6) {$\phi(\Gamma')_s$};
%\path[->,>=angle 90]
%(B') edge (A')
%(B') edge (B)
%(C) edge (B');
\path[->, font=\scriptsize,>=angle 90]
(B) edge node[right]{$\pi$} (A)
(A') edge node[below]{$\nu$} (A)
(B') edge node[above]{$\mu$} (B)
(B') edge node[left]{$\phi$} (A')
(C) edge node[right]{$\pi|_{\Gamma}$} (D)
(C') edge node[left]{$\phi|_{\Gamma'}$} (D')
(C') edge node[above]{$\mu|_{\Gamma'}$} (C)
(D') edge node[below]{$\nu|_{\phi(\Gamma')}$} (D);
%(E) edge %%[bend left]% 
%(F) %%node [right]{$\pi|_{(\Gamma_1)_s}$}% 
%(E') edge
%%[bend right] 
%node[left]{$\phi|_{(\Gamma')_s}$} (F')
%(E') edge
%%[bend left] 
%node[above]{$\mu|_{(\Gamma')_s}$} (E)
%(F') edge
%%[bend right] 
%(F); %node [below]{$\nu|_{\phi(\Gamma'_1)_s}$}% 
\path[-stealth, auto]
(C) edge[draw=none] node [sloped, auto=false, allow upside down]{$\subset$} (B)
(C') edge[draw=none] node [sloped, auto=false, allow upside down]{$\subset$} (B')
(D) edge[draw=none] node [sloped, auto=false, allow upside down]{$\subset$} (A)
(D') edge[draw=none] node [sloped, auto=false, allow upside down]{$\subset$} (A');
%(E) edge[draw=none] node [sloped, auto=false, allow upside down]{$\subset$} (C)
%(E') edge[draw=none] node [sloped, auto=false, allow upside down]{$\subset$} (C')
%(F) edge[draw=none] node [sloped, auto=false, allow upside down]{$\in$} (D)
%(F') edge[draw=none] node [sloped, auto=false, allow upside down]{$\subset$} (D');
\end{tikzpicture}
\end{center}

Then $\mu^\ast E|_{\Gamma'}$ is $(\pi\circ\mu)|_{\Gamma'}$-pseudoeffective, hence $\phi^\ast\Delta|_{\Gamma'}$ is $(\nu\circ\phi)|_{\Gamma'}$-pseudoeffective since $\mu^\ast E\leqslant \phi^\ast\Delta$. On the other hand,    
%Let $s\in\pi(\Gamma)$ be a general point. 
%then $\phi|_{(\Gamma')_s}:(\Gamma')_s\to\phi(\Gamma')_s$ is an analytic fibre space. 
by our construction $-\Delta$ is $\nu$-ample, then $-\Delta|_{\phi(\Gamma')}$ is $\nu|_{\phi(\Gamma')}$-ample, and thus 
\[
-\phi^\ast\Delta|_{(\Gamma')}=(\phi|_{(\Gamma')})^\ast(-\Delta|_{\phi(\Gamma')})
\]
is $(\nu\circ\phi)|_{\Gamma'}$-nef. Therefore $-\phi^\ast\Delta|_{(\Gamma')}$ is $(\nu\circ\phi)|_{\Gamma'}$-numerically trivial, which implies that $-\Delta|_{\phi(\Gamma')}$ is $\nu|_{\phi(\Gamma')}$-numerically trivial. But $-\Delta|_{\phi(\Gamma')}$ is $\nu|_{\phi(\Gamma')}$-ample, this cannot happen unless $\nu|_{\phi(\Gamma')}:\phi(\Gamma')\to\pi(\Gamma)$ is finite. We will show in the sequel that $\nu|_{\phi(\Gamma')}$ is never finite:

Since $\phi$ is the composition of a finite morphism (normalization) followed by a flat morphism, $\phi$ is equidimensional; in particular, $\phi(\Gamma')$ is Weil divisor on $S$. Moreover, $\nu(\phi(\Gamma'))=\pi\circ\mu(\Gamma')=\pi(\Gamma)$ is of codimension $\geqslant 2$, hence $\phi(\Gamma')$ is $\nu$-exceptional;
%$\phi(\Gamma'_1)\subseteq\text{Exc}(\nu)$; 
in particular, the general fibre of the morphism $\nu|_{\phi(\Gamma')}:\phi(\Gamma')\to\pi(\Gamma)$ is of dimension $\geqslant 1$\,. Thus we prove the proposition.
\end{appendices}

\bibliography{Iitaka}
\end{document}